\theoremstyle{plain} 
\newtheorem{thm}{Theorem}
\newtheorem{cor}{Corollary}
\newtheorem{prop}{Proposition}
\theoremstyle{definition}
\newtheorem{defn}{Definition}
\theoremstyle{remark} 
\newtheorem{remark}{Remark}
\newcommand{\prob}{\mathsf{P}} 
\newcommand{\E}{\mathsf{E}}
\newcommand{\unif}{{\sf Unif}}
\newcommand{\nm}{{\sf N}}
\newcommand{\A}{\mathcal{A}} 
\renewcommand{\AA}{\mathbb{A}}
\newcommand{\RR}{\mathbb{R}}
\newcommand{\OO}{\mathbb{O}}
\newcommand{\XX}{\mathbb{X}}
\newcommand{\YY}{\mathbb{Y}}
\newcommand{\ZZ}{\mathbb{Z}}
\newcommand{\TT}{\mathbb{T}}
\renewcommand{\SS}{\mathbb{S}}
\newcommand{\eps}{\varepsilon}
\newcommand{\model}{\mathscr{P}}
\newcommand{\prior}{\mathsf{Q}}
\newcommand{\credal}{\mathscr{Q}}
\newcommand{\pbcred}{\mathscr{R}}
\newcommand{\pb}{\mathsf{R}}
\newcommand{\lPi}{\underline{\Pi}}
\newcommand{\uPi}{\overline{\Pi}}
\newcommand{\lPhi}{\underline{\Phi}}
\newcommand{\uPhi}{\overline{\Phi}}
\newcommand{\lprior}{\underline{\mathsf{Q}}}
\newcommand{\uprior}{\overline{\mathsf{Q}}}
\newcommand{\uuprob}{\mathbf{\overline{P}}}
\newcommand{\eval}{\mathfrak{e}}
\title{Regularized e-processes: anytime valid inference with knowledge-based efficiency gains}
\author{Ryan Martin\footnote{Department of Statistics, North Carolina State University, {\tt rgmarti3@ncsu.edu}}
}
\date{\today}
\begin{document}

\maketitle 

\begin{abstract}
Classical statistical methods have theoretical justification when the sample size is predetermined.  In applications, however, it's often the case that sample sizes are data-dependent rather than predetermined.  The aforementioned methods aren't reliable in this latter case, hence the recent interest in e-processes and methods that are anytime valid, i.e., reliable for any dynamic data-collection plan.  But if the investigator has relevant-yet-incomplete prior information about the quantity of interest, then there's an opportunity for efficiency gain.  This paper proposes a {\em regularized e-process} framework featuring a knowledge-based, imprecise-probabilistic regularization with improved efficiency.  A generalized version of Ville's inequality is established, ensuring that inference based on the regularized e-process are anytime valid in a novel, knowledge-dependent sense.  Regularized e-processes also facilitate possibility-theoretic uncertainty quantification with strong frequentist-like calibration properties and other Bayesian-like properties: satisfies the likelihood principle, avoids sure-loss, and offers formal decision-making with reliability guarantees. 

\smallskip

\emph{Keywords and phrases:} Credal set; decision-making; e-value; imprecise probability; inferential model; possibility theory; safety; uncertainty quantification. 
\end{abstract}


\section{Introduction}
\label{S:intro}

Most statistical and machine learning methods in the literature have theoretical justification that assumes the sample size is (or can safely be treated as) fixed, say, by the experimental or data-collection protocol.  But in many applications this assumption is violated---e.g., decisions to stop/continue experimentation are made dynamically while monitoring the data---yet the same fixed-sample-size methods are used for data analysis.  Use of methods when they lack theoretical justification raises doubts about their reliability.  This lack of reliability is surely at least a contributor to the widely-publicized replication crisis in science \citep[e.g.,][]{nuzzo2014, baker.nature.2016, camerer2018}.  

To address this concern, there's been a surge of effort to develop {\em safe} or {\em anytime valid} statistical methods, i.e., methods that can preserve their reliability even when applied in cases where the data-collection process is dynamic; see the recent survey by \citet{evalues.review} and Section~\ref{SS:eprocess} below.  At the heart of these new developments are {\em e-processes}, which satisfy a time-uniform probability bound, namely, {\em Ville's inequality}, that can be leveraged to construct procedures with error rate control guarantees that hold no matter how the data-collection process is stopped.  
Different anytime valid methods are typically compared in terms of their efficiency, i.e., power of tests, size of confidence intervals, etc.  In terms of the e-process itself, efficiency corresponds to how quickly it grows when evaluated at the ``wrong'' hypotheses: fast growth of the e-process is desirable because it means that what's ``wrong'' can be detected sooner, hence practitioners can make safe and reliable decisions with less time/exposure and fewer resources.   

But e-processes have limits to how fast they can grow \citep[e.g.,][]{grunwald.safetest}.  What can be done to improve efficiency when there are no more e-process tweaks to be made? It's now statistical second-nature to leverage penalty functions or prior distributions for {\em regularization}, to encourage certain structure in estimates, usually for the sake of efficiency gain.  Regularization is relatively easy when the goal is asymptotic consistency or calibration, since many different regularization strategies work in that sense. But anytime validity is a finite-sample property, so virtually any tweak made to an e-process would jeopardize the anytime validity property that motivated its use in the first place.  

This paper considers the situation in which the investigator {\em knows something} about $\Theta$, the relevant quantity of interest, before the data are available for analysis.  To keep the names and roles clear, ``I'' refers to me, this paper's author, and ``You'' refers to the investigator, a catch-all term for the individual(s) carrying out a study, analyzing data, etc.  The simple phrase {\em knows something} is rather nuanced.  On the one hand, I mean {\em know} in the strong sense of absolute certainty whereas, on the other hand, the {\em something} that's known could be so substantial that it renders data irrelevant, so miniscule that it means practically nothing, or somewhere in the middle of these two extremes.  
According to \citet{levi1980}, Your {\em corpus of knowledge} consists of the (lower and upper) probabilities that You'd assign to hypotheses about the uncertain $\Theta$ based on the information available to You at the relevant moment in time.  
The point I want to emphasize is that Your corpus of knowledge is what it is at the time You call on it; without new information that warrants revising Your corpus, there's no justification for adding to or subtracting from it. Modern data science, however, focuses on methods that can be applied off-the-shelf which, by design, are incapable of accommodating Your prior knowledge and, therefore, expect You to subtract from Your corpus of knowledge. 
The present paper shows how You can use precisely what You know and retain statistical reliability.  

The key insight is leveraging Your corpus of knowledge in two new and distinct ways.  The first is {\em regularization}, which amounts to directly manipulating the data-driven e-process using what You know.  Unless You're ignorant, Your corpus of knowledge casts a degree of doubt on some values $\theta$ of $\Theta$.  The regularizer appropriately discounts those $\theta$ values, thereby boosting the regularized e-process there.  A bigger e-process means more $\theta$ values can be excluded from consideration, hence more efficiency.  The second is introducing a generalized, {\em regularization-aware} notion of anytime validity with respect to which the regularized e-process is evaluated.  Since You can't doubt what You know, You'd be willing to evaluate the performance of Your regularized e-process based on a metric that depends on what You know.  Roughly, my proposal requires that the regularized e-process be anytime valid with respect to any $(\text{data}, \Theta)$-joint distribution compatible with both the data-generating model and Your corpus of knowledge., the latter being expressed in terms of a credal set of ``priors'' for $\Theta$.  The standard definition of anytime validity corresponds to the special---and most restrictive---case where the aforementioned credal set contains all priors for $\Theta$.  By moving away from this most-restrictive case, regularization allows for efficiency gains without ruining anytime validity. 

The chief technical development in the paper is the establishment of a generalized version of Ville's inequality for my proposed regularized e-process, one that depends in a certain way on Your corpus of knowledge.  This leads to provably reliable, non-asymptotic inference with (or without) regularization and opportunities for efficiency gain.  Beyond construction of test and confidence procedures, a full-blown framework for (possibilistic) uncertainty quantification, which facilitates both calibrated, data-dependent (imprecise) probabilistic reasoning about $\Theta$, and reliable, data-driven decision-making.  


The organization of the paper is as follows.  After some background in Section~\ref{S:background} on e-processes and imprecise probability, Section~\ref{S:reg.eprocess} incorporates Your corpus of knowledge into the data-driven, e-process-based analysis via regularization,   
establishes an imprecise-probabilistic generalization of Ville's inequality, and investigates its statistical implications.  Illustrations of the possible efficiency gains are presented in Section~\ref{SS:gains}.  Focus shifts in Section~\ref{S:reg.uq} to broader uncertainty quantification about $\Theta$.  What I'm proposing is a brand of {\em inferential model} (IM), where the familiar probabilistic reasoning is replaced by provably reliable possibilistic reasoning. My proposed e-possibilistic IM framework inherits the anytime validity from the regularized e-process, which implies that my uncertainty quantification is calibrated in a strong sense, hence reliable.  Remarkably, in addition to these frequentist-like calibration properties, the e-possibilistic IM satisfies several other desirable Bayesian-like properties: satisfying the likelihood principle, avoiding sure-loss, and formal decision-making with strong reliability guarantees.  An application analyzing clinical trial data is presented in Section~\ref{S:ware}, illustrating how subject matter knowledge gets translated into a regularizer.  Concluding remarks are made in Section~\ref{S:discuss}, and further technical details are given in the supplementary materials.




\section{Background}
\label{S:background}

\subsection{Setup and notation}
\label{SS:setup}

Start with a baseline probability space $(\SS, \A, \prob)$, where $\A$ is a $\sigma$-algebra of subsets of $\SS$ and $\prob$ is a probability measure.  Let $Z: \SS \to \ZZ$ be a measurable function that takes values in a topological space $\ZZ$.  
I'll also write $\prob$ for the induced distribution of $Z$.  

Next, let $Z_1, Z_2, \ldots$ denote independent and identically distributed (iid) copies of $Z$ and, for each $n \geq 1$, write $Z^n = (Z_1,\ldots,Z_n)$.  
Define the filtration $\A_n$ to be the sequence of $\sigma$-algebras determined by the information in  $Z^n$.  A stopping time $N$ is a positive integer-valued random variable such that, for each $n$, the event $\{N=n\}$ is in $\A_n$.  


For the statistical applications that I have in mind, $\prob$ is {\em uncertain}.  
To facilitate this notion of an ``uncertain $\prob$,'' it'll help to introduce (the notation of) a model, namely, $\model = \{\prob_\omega: \omega \in \OO\}$, indexed by $\OO$.  Of course, this could be a familiar parametric model, but it could also be that $\OO$ is in one-to-one correspondence with the set of all relevant probability distributions, so there's no loss of generality in introducing the index $\omega$.  

The goal then is to quantify uncertainty about the uncertain $\prob$ or, equivalently, about the uncertain index $\Omega$, based on observations $Z_1,Z_2,\ldots$ from the underlying process that depends on $\prob$ or $\Omega$.  It's often the case that only certain features of the uncertain ($\prob$ or) $\Omega$ are relevant to the application at hand---e.g., maybe one only needs to know about the mean survival time of patients---so I'll define this relevant feature as $\Theta = f(\Omega)$, taking values in the possibility space $\TT = f(\OO)$.  
Since $f$ could be the identity function, in which case $\Theta = \Omega$, this focus on quantifying uncertainty about $\Theta$ based on data $Z_1,Z_2,\ldots$ is without loss of generality.  

One mild technical condition I'll impose is that the topology on $\OO$ is sufficiently rich that {\em $f$ is continuous}.  Continuity isn't necessary for the developments here (see Remark~\ref{re:continuity} in Appendix~\ref{A:remarks}), but it's no serious practical constraint and it greatly simplifies the developments in Section~\ref{SS:pullback}.  
For parametric statistical models, the relevant quantity would either be the parameter itself, or some interpretable feature thereof, e.g., one component of a parameter vector, so the function is almost always trivially continuous.  When no parametric model is assumed, as is often the case in machine learning applications, the relevant features $\Theta = f(\Omega)$ are often risk minimizers.  Let $L_t: \ZZ \to \RR$ be a loss function indexed by $t \in \TT$ and define the mapping $(\omega, t) \mapsto \prob_\omega L_t$, the expected loss corresponding to $t$ at $\omega$.  If this mapping is continuous, then Berge's {\em maximum theorem} \citep[e.g.,][Theorem~17.31]{infinite.dim.analysis} implies that $f(\omega) := \arg\min_t \prob_\omega L_t$ is continuous.


\subsection{E-processes}
\label{SS:eprocess}

Start by fixing a particular $\omega \in \OO$.  A sequence $(M^n: n \geq 0)$ is a {\em supermartingale}, relative to $\prob_\omega$ and the filtration $(\A_n)$, if $\E_\omega(M^n \mid \A_{n-1}) \leq M^{n-1}$, where $\E_\omega$ denotes expected value with respect to $\prob_\omega$.  I have in mind a function $M(\cdot)$ that maps $\ZZ$-valued sequences to numbers, and $M^n = M(Z^n)$ for each $n$, with $M^0 = M(\square)$ the value of $M$ when applied to the empty sequence $\square$.  An $\omega$-dependent supermartingale $(M_\omega^n: n \geq 0)$ is a {\em test supermartingale} for $\omega$ if it's non-negative and $M_\omega^0 \equiv 1$ \citep[e.g.,][]{shafer.vovk.martingale}.  A sequence $(\eval_\omega^n: n \geq 0)$ is an {\em e-process} for $\omega$ if it's non-negative and upper bounded by a test supermartingale for $\omega$; see \citet{ramdas.nnm} and \citet{evalues.review} for details.  Again, $(\eval_\omega^n)$ is determined by a mapping $\eval_\omega(\cdot)$, i.e., $\eval_\omega^n = \eval_\omega(Z^n)$.  Test supermartingales and, hence, e-processes satisfy two key properties: the first \citep[e.g.,][Theorem~5.7.6]{durrett2010} is 
\[ \E_\omega ( M_\omega^N ) \leq 1 \quad \text{all $\omega \in \OO$, all stopping times $N$}, \]
and the second, known as {\em Ville's inequality} \citep[e.g.,][]{shafer.vovk.book.2019}, is
\[ \prob_\omega(M_\omega^N \geq \alpha^{-1}) \leq \alpha \quad \text{all $\alpha \in (0,1]$, all stopping times $N$}. \]
Results of this type have important statistical implications \citep[e.g.,][]{shafer.betting}.  One is that, if $(\eval_\omega^n)$ is an e-process for $\omega$, then the test that rejects the hypothesis ``$\Omega=\omega$'' based on data $Z^n$ if and only if $\eval_\omega(Z^n) \geq \alpha^{-1}$ controls the frequentist Type~I error at level $\alpha$, regardless of what stopping rule might be used to terminate the data-collection process.  This {\em anytime validity}, or {\em safety} \citep{grunwald.safetest}, of the e-process-based tests is a major advancement beyond the classical tests that are valid only for fixed $n$. Of course, if one has a collection of e-processes $(\eval_\omega^n)$, one for each $\omega \in \OO$, then the above testing procedure can be inverted to construct a confidence set for $\Omega$ which inherits the same anytime validity property: the frequentist coverage probability is no less than the nominal level independent of the choice of stopping rule. 

With one exception, my examples below use Savage--Dickey e-processes, i.e., Bayes factors that rely on ``default'' priors.  Such constructions are quite general, but difficulties can arise in more complex problems.  My proposal described in the following sections can be applied to any e-process construction, including those in \citet{evalues.review}.  

A practically important extension of the ideas presented above is to the case of composite hypotheses.  Even the simple-looking hypotheses I have in mind here are composite---that is, ``$\Theta=\theta$'' generally corresponds to a set of $\omega$ values for $\Omega$.
Fortunately, there's an easy way to accommodate this more general case under certain measurability constraints: if  $(\eval_\omega^n)$ is an e-process for each $\omega \in \OO$, then, with a minor abuse of notation, the following is an e-process for $\theta$,
\[ \eval_\theta^n = \inf_{\omega \in \OO: f(\omega) = \theta} \eval_\omega^n, \quad \theta \in \TT. \]
Ignoring issues concerning the potential non-measurability of the above infima,
it follows immediately from the properties discussed above that 
\begin{equation}
\label{eq:eval.bound}
\sup_{\omega: f(\omega) = \theta} \E_\omega \{\eval_\theta(Z^N) \} \leq 1 \quad \text{all $\theta \in \TT$, all stopping times $N$}. 
\end{equation}
There's also a corresponding version of Ville's inequality:
\begin{equation}
\label{eq:ville}
\sup_{\omega \in \OO: f(\omega) = \theta} \prob_\omega\bigl\{ \eval_\theta(Z^N) \geq \alpha^{-1} \bigr\} \leq \alpha \quad \text{all $\alpha \in (0,1]$, all stopping times $N$}. 
\end{equation}
I should mention that constructing a e-process for $\Omega$ and then marginalizing to $\Theta$ via optimization as just described is not the only way.  Indeed, one can construct e-processes for $\Theta$ directly, bypassing $\Omega$ altogether; see \citet{dey.gue} and Section~\ref{SS:gue} below. 

\subsection{Imprecise probability}
\label{SS:ip}


The most familiar approach to quantifying uncertainty about $\Theta$ is to introduce a probability measure $\prior$, supported on subsets of $\TT$ and, then, for any relevant hypothesis ``$\Theta \in H$,'' where $H \subseteq \TT$, Your uncertainty about its truthfulness is quantified by $\prior(H)$, the $\prior$-probability of $H$. (Henceforth I'll refer to both $H$ and ``$\Theta \in H$'' as hypotheses about $\Theta$.) But it's easy to imagine having limited information and, hence, You can't precisely state the probability of $H$ for each hypothesis $H$.  This includes the extreme case of {\em vacuous} information, where You literally know nothing about $\Theta$, so all You can say is that $\prior(H) \in [0,1]$ for all $H \not\in \{\varnothing, \TT\}$.  More generally, You might be in between the two extremes of knowing nothing and knowing $\prior$ precisely.  For example, suppose ``You're 95\% sure that $\Theta$ is bigger than 7.''  This information imposes bounds on $\prior(H)$ for some $H$, e.g., $\prior(H) \leq 0.05$ for all $H \subseteq (-\infty, 7]$, which are satisfied by many different $\prior$, so this information fails to identify a single $\prior$.  This ambiguity alone isn't grounds for You to ignore the information altogether.

When the information available about $\Theta$ determines a single $\prior$, then Your uncertainty quantification is {\em precise}; otherwise, it's {\em imprecise}.  Mathematically, the latter case can be handled using {\em imprecise probabilities}.  I must emphasize that precise probability isn't superior to imprecise probability.  Your information about $\Theta$ is what it is, so artificially embellishing on that information to avoid addressing the inherent imprecision doesn't make Your uncertainty quantification better.  The goal is to represent Your corpus of knowledge as faithfully as possible; if that requires imprecision, then so be it. 

A fairly general way to define imprecise probabilities is via a {\em credal set} $\credal$, i.e., a non-empty, closed, and convex collection of probabilities $\prior$ on $\TT$ \citep[e.g.,][Sec.~4.2]{levi1980}.  Intuitively, $\credal$ encodes Your knowledge about $\Theta$, so the collection $\credal(H) = \{ \prior(H): \prior \in \credal \}$ of credences quantifies uncertainty about the truthfulness of $H$.  Natural summaries of this collection include the lower and upper bounds:
\begin{equation}
\label{eq:lower.upper}
\lprior(H) = \inf\{ \prior(H): \prior \in \credal \} \quad \text{and} \quad \uprior(H) = \sup\{ \prior(H): \prior \in \credal \}. 
\end{equation}
Thanks to the structure in $\credal$, the lower and upper probabilities are linked together, 
i.e., $\lprior(H) = 1 - \uprior(H^c)$ for all $H$.  For instance, if ``You're 95\% sure that $\Theta$ exceeds 7'' case above, then You get $\lprior(H)=0$ and $\uprior(H) = 0.05$ for all $H \subseteq (-\infty, 7]$, $\lprior(H)=0.95$ for all $H$ with $H \subseteq (7,\infty)$, and $\uprior(H) = 1$ for all $H$ with $H \cap (7,\infty) \neq \varnothing$.  In what follows, without loss of generality, I'll focus almost exclusively on upper probabilities.  De Finetti-style betting interpretations can be given to $\lprior$ and $\uprior$ (see  Remark~\ref{re:betting} in Appendix~\ref{A:remarks}), but this isn't necessary.  You can follow \citet{shafer1976} and others by interpreting $\lprior(H)$ and $\uprior(H)$ simply as Your quantitative degree of confidence/support and degree of plausibility, respectively, in the truthfulness of $H$. 


In precise probability theory, probabilities are extended to expected values via Lebesgue integration.  In imprecise probability theory, there's an analogous extension to lower and upper expected values, or lower and upper previsions \citep[e.g.,][]{walley1991, lower.previsions.book}.  Given $g: \TT \to \RR$, the lower and upper expected value is defined as 
\begin{equation}
\label{eq:envelope}
\lprior \, g = \inf_{\prior \in \credal} \E^{\Theta \sim \prior} \{g(\Theta)\} \quad \text{and} \quad \uprior \, g = \sup_{\prior \in \credal} \E^{\Theta \sim \prior} \{ g(\Theta) \}, 
\end{equation}
i.e., the lower and upper limits of the usual expected values of $g(\Theta)$ with respect to probability distributions $\prior$ in the credal set $\credal$.  
According to the definition above, computation of the lower and upper expectations appears to require non-trivial optimization over a potentially infinite-dimensional space $\credal$.  In certain cases, however, there are equivalent formulations with computationally simpler formulas; see, e.g., Equation~\eqref{eq:upper.e.poss} below and the general discussion of {\em Choquet integration} in Appendix~\ref{A:choquet}.   

A special case of imprecise probability is {\em possibility theory}, 
closely tied to fuzzy set theory \citep[e.g.,][]{zadeh1978}.  The seminal text on possibility theory is \citet{dubois.prade.book}; see, also
\citet{hose2022thesis} for a review and, e.g., \citet{dubois2006} and \citet{imchar, martin.partial2} for connections to statistics.  What distinguishes possibility theory from other brands of imprecise probability is its simplicity.  Indeed, a possibility measure is determined by a real-valued function, analogous to how precise probabilities are determined by a probability density.  The difference is that, while probability theory uses integration, possibility theory is based on optimization.  Define a contour function $q: \TT \to [0,1]$ such that $\sup_{\theta \in \TT} q(\theta) = 1$, the condition analogous to a probability density function integrating to 1.  Then the corresponding possibility measure is the upper probability $\uprior$ defined via optimization:
\begin{equation}
\label{eq:upper.p.poss}
\uprior(H) = \sup_{\theta \in H} q(\theta), \quad H \subseteq \TT.
\end{equation}
A simple illustration of a possibility contour $q$ and of the formula \eqref{eq:upper.p.poss} is shown in Figure~\ref{fig:toy}.  And the ``You're 95\% sure that $\Theta$ exceeds 7'' case can easily be encoded by a possibility measure $\uprior$ as in \eqref{eq:upper.p.poss} determined by the contour $q(\theta) = 0.05 \times 1(\theta \leq 7) + 1(\theta > 7)$.

\begin{figure}[t]
\begin{center}
\scalebox{0.6}{\includegraphics{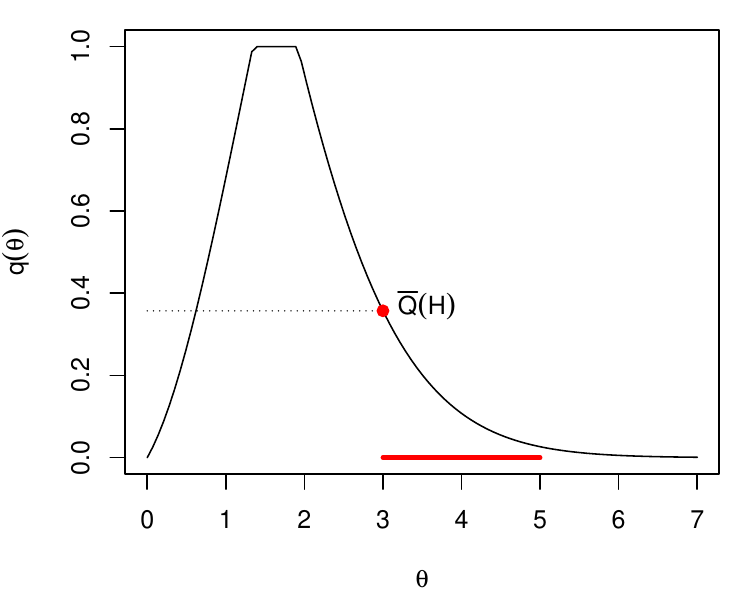}}
\end{center}
\caption{A possibility contour $q$, the hypothesis of interest $H=[3,5]$ (red), and the corresponding upper probability $\uprior(H)$ determined by optimization as in \eqref{eq:upper.p.poss}.}
\label{fig:toy}
\end{figure}

Two possibility-theoretic details deserve mention.  The first is a simple formula for $\uprior$'s extension to an upper expected value.  
If $g: \TT \to \RR$ is a suitable non-negative function and $\uprior$ a possibility measure determined by contour $q$ as described above, then the corresponding possibilistic upper expected value (see Appendix~\ref{A:choquet})
is given by
\begin{equation}
\label{eq:upper.e.poss}
\uprior \, g = \int_0^1 \Bigl\{ \sup_{\theta: q(\theta) > s} g(\theta) \Bigr\} \, ds.
\end{equation}
The second is a simple characterization \citep{cuoso.etal.2001, dubois.etal.2004} of the credal set $\credal$ associated with the possibility measure $\uprior$ determined by contour $q$:
\begin{equation}
\label{eq:credal.char}
\prior \in \credal \iff \prior\{ q(\Theta) \leq \alpha \} \leq \alpha \;\; \text{for all $\alpha \in [0,1]$}. 
\end{equation}
The reader may notice the similarity to p-values: $\prior$ belongs to the credal set if and only if $q(\Theta)$ is stochastically no smaller than $\unif(0,1)$ as a function of $\Theta \sim \prior$.  



\section{Regularized e-processes}
\label{S:reg.eprocess}

\subsection{Prior information and regularization}

Suppose Your knowledge about $\Theta$ takes the form of a (possibly vacuous) imprecise probability $\uprior$.  Introduce $\rho: \TT \to [0,\infty]$ such that $\rho(\theta)$ represents the weight of evidence (relative to $\uprior$) against the basic proposition ``$\Theta=\theta$.''  This is different from familiar notions of statistical evidence because there's no data, but there are still connections.  \citet[][Ch.~6.7]{good.evidence.book}, for example, proves a now-well-known result (which he attributes to Turing) that, in certain cases, the expectation of the Bayes factor weighing evidence against a true proposition is equal to 1.  More generally, this ``expected value is $\leq 1$'' property is key to all the recent e-value developments; see \eqref{eq:eval.bound} above.  To be a proper {\em regularizer}, the function $\rho$ must be similarly bounded in (upper) expectation.

\begin{defn}
\label{def:regularizer}
Let $\uprior$ be an upper probability/prevision on $\TT$.  A function $\rho: \TT \to [0,\infty]$ is a regularizer if its $\uprior$-upper expected value is bounded by 1, i.e., if $\uprior \rho \leq 1$. 
\end{defn}


Note that any non-negative function $\rho$ with $0 \leq \rho \leq 1$ is a regularizer, but such a choice is useless.  Suppose, for example, that the prior is fully vacuous, i.e., if You literally know nothing about $\Theta$ {\em a priori}, which implies $\uprior\rho = \sup_{\theta \in \TT} \rho(\theta)$.  Then choosing $\rho \leq 1$ is the only choice that satisfies the condition in Definition~\ref{def:regularizer}.  So, a {\em trivial} choice of $\rho$ that's upper-bounded by 1 matches the no-prior-information case and, hence, can't offer any meaningful regularization.  Henceforth, except for the vacuous prior case where $\rho \equiv 1$ is the clear choice of regularizer, I'll focus on {\em non-trivial} regularizers, i.e., those that aren't upper-bounded by 1. 
Then the general goal is, for Your given $\uprior$, to find a function $\rho$ which can take values possibly much larger than 1 while still satisfying $\uprior\rho \leq 1$.  Naturally, for two regularizers $\rho$ and $\rho'$ with $\rho'(\theta) \leq \rho(\theta)$ for all $\theta$, the dominant regularizer $\rho$ is preferred.  A concrete transformation of prior information into a regularizer will be presented below; further details and examples are given in Appendix~\ref{A:other}. 

The quality of the regularizer is determined by how much of a knowledge-based boost it offers to an existing e-process $\eval_\theta(\cdot)$ for $\Theta$, as described in Section~\ref{SS:eprocess}.
My proposal is to define a {\em regularized e-process} by combining $\rho$ and $(\eval_\theta: \theta \in \TT)$ as follows: 
\begin{equation}
\label{eq:reg.eprocess}
\eval^\text{reg}(\cdot, \theta) = \rho(\theta) \times \eval_\theta(\cdot), \quad \theta \in \TT.
\end{equation}
The intuition is that, if $\theta$ is incompatible with prior knowledge, so that $\rho(\theta) \gg 1$, then $\eval^\text{reg}(\cdot,\theta) \gg \eval_\theta(\cdot)$, which creates an opportunity for increased efficiency.  But non-trivially manipulating the original e-process in an effort to boost efficiency will surely jeopardize anytime validity in the sense of Section~\ref{SS:eprocess}, hence care is needed.  In Section~\ref{SS:reg.ville} below, I'll establish my main claim: in a certain sense, the regularized e-process enjoys efficiency gains without jeopardizing anytime validity.  This ``certain sense'' involves what I'll argue is a meaningful imprecise-probabilistic relaxation of anytime validity as in Section~\ref{SS:eprocess}.

Finally, there are justifications for my choice to define the regularized e-process in \eqref{eq:reg.eprocess} via multiplication.  One is based on a Bayesian-like updating-coherence property and the other is based on a formal demonstration of dominance, similar to a result in \citet{vovk.wang.2021}; these details are presented in Appendix~\ref{SS:product}.

\subsection{Special case: possibilistic priors}
\label{SS:possibilistic.case}

Suppose Your prior knowledge about $\Theta$ is encoded by a possibility measure with contour function $q$; see Appendix~B for other options.  Then Your $\uprior$ is defined via optimization as in \eqref{eq:upper.p.poss}, and upper expectation is as in \eqref{eq:upper.e.poss}.  This is the simplest imprecise probability model, and simplicity is a virtue: Your prior knowledge about $\Theta$ is necessarily limited, so a more expressive imprecise probability model used to describe it makes elicitation more difficult.  The claim is that experts can offer statements like: ``I'd not be surprised at all if $\Theta=a$, I'd be a little surprised if $\Theta=b$, and I'd be very surprised if $\Theta=c$.'' ({\em Surprise} is due to \citealt{shackle1961}.)  Then the qualitative statement above could be made quantitative by introducing a possibility contour $q$ with $q(a)=1$, $q(b)$ smaller, $q(c)$ very small, etc.  This is often how penalties and priors are chosen.  For example, a Bayesian might take the least-surprising value to be the prior mode and then choose a density that's a decreasing function of surprise.  But there's a drastic difference between a probability density and a possibility contour, even if they have similar shapes: the former determines a precise probability thereby adding artificial information to Your corpus of knowledge.  

Recall that a useful regularizer $\rho$ must take values (potentially much) larger than 1 at $\theta$ values that are incompatible with the prior information.  In the present context, ``incompatibility'' corresponds to a small $q$ value, so $\rho(\theta)$ should be large when $q(\theta)$ is small.  As a first attempt, this could be accomplished by taking $\rho$ to be the reciprocal of $q$.  But $q$ is no more than 1, so such a $\rho$ would never be less than 1 and hence it can't be a regularizer in the sense of Definition~\ref{def:regularizer}.  Apparently $q$ is too small for the reciprocal to work, so it needs to be inflated first.  Following \citet[][Sec.~6]{shafer.vovk.martingale}, define a(n admissible) {\em calibrator}
as a non-decreasing function $\gamma: [0,1] \to (0, \infty]$ such that   
\begin{equation}
\label{eq:calibrate}
\int_0^1 \frac{1}{\gamma(s)} \, ds = 1. 
\end{equation} 
Of course, $\gamma$ can taken to be the reciprocal of any probability density function supported on $[0,1]$.  Then 
I'll define the {\em regularizer} in terms of the calibrated version of $q$: 
\begin{equation}
\label{eq:regularizer}
\rho(\theta) = \{ \gamma \circ q(\theta) \}^{-1}, \quad \theta \in \TT. 
\end{equation}
The role played by the calibrator is to inflate the contour ``just enough.'' 

\begin{prop}
\label{prop:prior.reg}
Let $\uprior$ be the possibility measure determined by the contour function $q$.  Then $\rho$ defined in \eqref{eq:regularizer} is a regularizer, i.e., has $\uprior$-upper expectation bounded by 1.
\end{prop}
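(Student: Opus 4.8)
The plan is to invoke the closed-form Choquet expression \eqref{eq:upper.e.poss} for the possibilistic upper expectation and then reduce the claim to the normalization identity \eqref{eq:calibrate} that defines an admissible calibrator. First I would verify that $\rho = (\gamma \circ q)^{-1}$ is a legitimate integrand for \eqref{eq:upper.e.poss}: it is non-negative (interpreting $1/\infty = 0$, which is where $\rho$ vanishes), it is measurable as the composition of the Borel function $s \mapsto 1/\gamma(s)$ --- monotone since $\gamma$ is non-decreasing --- with the measurable contour $q$, and it is bounded above by $1/\gamma(0) < \infty$ because $\gamma$ is non-decreasing and strictly positive at $0$. Hence \eqref{eq:upper.e.poss} applies and yields
\[ \uprior \rho = \int_0^1 \Bigl\{ \sup_{\theta:\, q(\theta) > s} \frac{1}{\gamma(q(\theta))} \Bigr\}\, ds. \]

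The crux is to bound the inner supremum by $1/\gamma(s)$ for each fixed $s \in [0,1]$. Since $\gamma$ is non-decreasing, $q(\theta) > s$ forces $\gamma(q(\theta)) \geq \gamma(s)$, so $1/\gamma(q(\theta)) \leq 1/\gamma(s)$; taking the supremum over all $\theta$ in the strict upper level set preserves the inequality (and it holds trivially, with the convention $\sup\varnothing \le 1/\gamma(s)$, when that level set is empty). Substituting this pointwise-in-$s$ bound into the display above and invoking \eqref{eq:calibrate} gives
\[ \uprior \rho \;\leq\; \int_0^1 \frac{1}{\gamma(s)}\, ds \;=\; 1, \]
which is exactly the condition of Definition~\ref{def:regularizer}, so $\rho$ is a regularizer.

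I do not anticipate a genuine obstacle: the only points needing care are (i) confirming the hypotheses under which the upper-expectation formula \eqref{eq:upper.e.poss} is valid --- this is precisely what the reference to Appendix~\ref{A:choquet} supplies when that formula is introduced --- and (ii) getting the direction of the monotonicity argument right, i.e.\ exploiting that $1/\gamma$ is \emph{non-increasing} so that the largest value of $1/\gamma(q(\theta))$ over the level set $\{q > s\}$ is controlled by its value ``just above'' $s$, namely $1/\gamma(s)$. The remaining edge cases --- where $\gamma \equiv +\infty$ on part of its domain (so $\rho = 0$ there) or where $q$ does not attain values arbitrarily close to $s$ from above --- are handled automatically by the same inequality and require no separate treatment.
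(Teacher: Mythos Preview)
Your proposal is correct and follows essentially the same route as the paper: apply the Choquet formula \eqref{eq:upper.e.poss}, use monotonicity of $\gamma$ to bound the inner supremum by $1/\gamma(s)$, and then invoke the calibrator normalization \eqref{eq:calibrate}. The paper's proof is terser---it omits your preliminary checks on measurability and boundedness of $\rho$---but the substantive argument is identical.
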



I'll follow \citet[][Appendix~B]{vovk.wang.2021} and suggest use of (the reciprocal of) a suitable beta mixture of beta density functions, which takes the form 
\[ \gamma(u) = \frac{u \, (-\log u)^{1 + \kappa}}{\kappa \times {\tt igamma}(-\log u, 1 + \kappa)}, \quad u \in [0,1], \quad \kappa > 0, \]
where ${\tt igamma}(z, \alpha) = \int_0^z t^{\alpha-1} \, e^{-t} \, dt$ is the incomplete gamma function. A particular advantage of the above calibrator is how rapidly it vanishes as $u \to 0$, which is directly related to how severely the corresponding regularizer penalizes those $\theta$ values incompatible with the prior information.  The gamma defined in the above display is the calibrator that I'll use for my illustrations in Section~\ref{SS:gains} and elsewhere.  

Although the context and form here are different, the message above is a familiar one to those who have experience with e-processes, etc.  Indeed, Equation~\eqref{eq:credal.char} implies that, roughly, $q(\Theta)$ is a p-value relative to $\uprior$.  And it's well-known \citep[e.g.,][]{sellkebayarriberger2001, vovk1993} that one must calibrate a p-value so that its reciprocal is an e-value.

\subsection{Induced upper joint distributions}
\label{SS:pullback}

Recall that $\Theta = f(\Omega) \in \TT$ is a function of $\Omega \in \OO$.  Since $\TT$ is defined as the image of $\OO$, the map $f: \OO \to \TT$ is trivially surjective.  There are applications in which $f$ would be a bijection, e.g., when $f$ is the identity mapping, so that $\Theta$ and $\Omega$ are equivalent in some sense.  
But it's typical that $\Theta = f(\Omega)$ is a lower-dimensional feature of $\Omega$.  Whether $f$ is or isn't a bijection becomes relevant when considering how knowledge about $\Theta$ translates to knowledge about the primitive $\Omega$ from which $\Theta=f(\Omega)$ is derived.  The {\em push-forward} operation that takes a probability for $\Omega$ to a corresponding probability for $\Theta=f(\Omega)$ is well-defined, whereas the reverse {\em pull-back} operation is not unique when $f$ is not a bijection.  Here I give only the necessary details concerning pull-backs. 



Given a probability distribution $\prior$ for $\Theta$ on $\TT$, the relevant question is under what conditions does there exist a corresponding probability distribution, say, $\pb$, for $\Omega$ on $\OO$ such that the distribution of $f(\Omega)$ is $\prior$.  The classical result of \citet[][Lemma~2.2]{varadarajan1963} establishes the existence of an $\pb$ corresponding to $\prior$ under very mild conditions.
In fact, for a given $\prior$, there's a non-empty class $\pbcred_\prior$ of such distributions, i.e., 
\begin{equation}
\label{eq:R.Q}
\pbcred_\prior = \{ \pb: \text{if $\Omega \sim \pb$, then $f(\Omega) \sim \prior$} \}, 
\end{equation}
and, moreover, this class is easily shown to satisfy the properties of a credal set.  

\begin{prop}
\label{prop:pullback}
For any given $\prior$ and continuous $f: \OO \to \TT$, the collection $\pbcred_\prior$ in \eqref{eq:R.Q} is non-empty, convex, and closed with respect to the weak topology. 
\end{prop}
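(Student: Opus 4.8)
The plan is to verify the three properties --- non-emptiness, convexity, and weak-closedness --- in turn, treating $\pbcred_\prior$ as the preimage of the singleton $\{\prior\}$ under the push-forward map $f_* : \pb \mapsto \pb \circ f^{-1}$ on the space of Borel probability measures. Non-emptiness is the one genuinely external input: I would cite \citet[Lemma~2.2]{varadarajan1963} (already invoked in the text) to produce at least one $\pb$ with $f_*\pb = \prior$, noting that the hypotheses of that lemma are met because $\OO$ and $\TT$ are (nice enough) topological spaces and $f$ is continuous, hence Borel measurable and onto.

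Convexity is routine and I would dispatch it first among the ``easy'' two: if $\pb_1, \pb_2 \in \pbcred_\prior$ and $\lambda \in [0,1]$, then for every Borel $H \subseteq \TT$ we have $(\lambda \pb_1 + (1-\lambda)\pb_2)(f^{-1}(H)) = \lambda \pb_1(f^{-1}(H)) + (1-\lambda)\pb_2(f^{-1}(H)) = \lambda \prior(H) + (1-\lambda)\prior(H) = \prior(H)$, so the convex combination again lies in $\pbcred_\prior$. In other words, $f_*$ is affine, so the preimage of the (convex) singleton $\{\prior\}$ is convex.

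For weak-closedness the key fact I would use is that the push-forward $f_*$ is \emph{continuous} from the space of Borel probabilities on $\OO$ (with the weak topology) to that on $\TT$ (with the weak topology) \emph{precisely because $f$ is continuous}: indeed, for any bounded continuous $g : \TT \to \RR$, the map $g \circ f$ is bounded and continuous on $\OO$, so $\pb \mapsto \int g \, d(f_*\pb) = \int (g\circ f)\, d\pb$ is weakly continuous, and these integrals determine the weak topology on the target. Since a singleton $\{\prior\}$ is closed in the (Hausdorff) weak topology, $\pbcred_\prior = f_*^{-1}(\{\prior\})$ is weakly closed as the preimage of a closed set under a continuous map. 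I would remark that this is exactly the step where the standing continuity assumption on $f$ (flagged in Section~\ref{SS:setup}) is used; without it one would have only measurability of $f_*$ in a weaker sense and closedness could fail.

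The only real obstacle, and it is a mild one, is making sure the ambient function spaces and topologies are set up so that these three arguments are literally valid --- i.e., that $\OO$ (and $\TT$) support a well-behaved weak topology on probability measures (metrizability or at least the portmanteau characterization), that $\varadarajan$'s lemma applies in the generality assumed here, and that ``closed with respect to the weak topology'' in the statement means what I have used. Assuming the topological regularity implicitly in force throughout the paper (Polish, or at least completely regular Suslin, spaces, which is the natural reading given the appeal to Berge's theorem and Varadarajan's lemma), none of these points causes difficulty, and the proof is essentially the three bullet-sized verifications above.
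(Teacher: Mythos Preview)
Your proposal is correct and matches the paper's approach in all essentials: Varadarajan's lemma for non-emptiness, affinity of the push-forward for convexity, and continuity of $f$ for weak-closedness. The paper phrases the closedness argument sequentially via the continuous mapping theorem rather than your preimage-of-a-closed-set formulation, but these are the same underlying fact (and your version has the minor advantage of not implicitly assuming sequences suffice for closedness in the weak topology).
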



An advantage to this credal set characterization is that it allows for the construction of a coherent upper joint distribution for $(Z^N, \Omega)$ based solely on the model $\model$ and Your prior information in $\credal$ about $\Theta$.  
Specifically, let $\uuprob$ denote this induced upper joint distribution for $(Z^N, \Omega)$, which is defined as via its upper expectation as 
\begin{equation}
\label{eq:uuprob}
\uuprob \, g = \sup_{\prior \in \credal} \sup_{\pb \in \pbcred_\prior} \underbrace{\E^{\Omega \sim \pb} \bigl[ \E^{Z \sim \prob_\Omega}\{ g(Z^N, \Omega) \} \bigr]}_{\text{joint distribution expectation}}, 
\end{equation}
where $g: \ZZ^\infty \times \OO \to \RR$ is a suitable function, and the highlighted term is just the usual expectation of $g(Z^N, \Omega)$ with respect to the joint distribution determined by the (marginal) distribution $\pb$ for $\Omega$ and the (now-interpreted-as-a-conditional) distribution $\prob_\omega$ for $Z^N$, given $\Omega=\omega$.  As mentioned in Section~\ref{SS:ip}, $\uuprob$-upper probabilities correspond to the right-hand side above with the appropriate indicator function plugged in for $g$.  

\subsection{Regularized Ville's inequality}
\label{SS:reg.ville}

My claim is that the regularized e-process satisfies prior knowledge-dependent versions of property \eqref{eq:eval.bound} and of Ville's inequality \eqref{eq:ville}.  This leads to a corresponding ``regularized'' version of anytime validity discussed further in Section~\ref{SS:implications} below. 

\begin{thm}
\label{thm:reg.ville}
Suppose the available prior information is encoded in the upper probability $\uprior$, which determines an upper joint distribution $\uuprob$ as in \eqref{eq:uuprob}.  If $\rho$ is a regularizer in the sense of Definition~\ref{def:regularizer} relative to $\uprior$, then for any e-process $\{\eval_\theta(\cdot): \theta \in \TT\}$, the corresponding regularized version $\eval^\text{\rm reg}$ in \eqref{eq:reg.eprocess} satisfies 
\begin{equation}
\label{eq:reg.ville0}
\uuprob\bigl[ \eval^\text{\rm reg}\{Z^N,f(\Omega)\} \bigr] \leq 1, \quad \text{for all stopping times $N$}, 
\end{equation}
and the following regularized Ville's inequality holds:
\begin{equation}
\label{eq:reg.ville}
\uuprob \bigl[ \eval^\text{\rm reg}\{Z^N, f(\Omega)\} > \alpha^{-1} \bigr] \leq \alpha \quad \text{all $\alpha \in [0,1]$, all stopping times $N$}. 
\end{equation}
\end{thm}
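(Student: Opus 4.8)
The plan is to unfold the definition \eqref{eq:uuprob} of $\uuprob$, reduce both claims to a ``fiberwise'' statement about an ordinary joint probability law indexed by a pair $(\prior,\pb)$ with $\prior\in\credal$ and $\pb\in\pbcred_\prior$, and then close the argument with the e-process expectation bound \eqref{eq:eval.bound} and the defining property $\uprior\rho\leq 1$ of the regularizer. Throughout I treat measurability of the relevant maps as a standing assumption (see the caveat below).

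First I would establish \eqref{eq:reg.ville0}. Fix $\prior\in\credal$ and $\pb\in\pbcred_\prior$ and look at the highlighted iterated expectation in \eqref{eq:uuprob} with $g(Z^N,\Omega)=\eval^{\text{reg}}\{Z^N,f(\Omega)\}=\rho(f(\Omega))\,\eval_{f(\Omega)}(Z^N)$. Conditioning on $\Omega$, the factor $\rho(f(\Omega))$ is $\Omega$-measurable and non-negative, so it pulls out of the inner $Z\sim\prob_\Omega$ expectation, leaving
\[
\E^{\Omega\sim\pb}\bigl[\rho(f(\Omega))\;\E^{Z\sim\prob_\Omega}\{\eval_{f(\Omega)}(Z^N)\}\bigr].
\]
For every $\omega$, the point $\omega$ belongs to $\{\omega':f(\omega')=f(\omega)\}$, so \eqref{eq:eval.bound} gives $\E^{Z\sim\prob_\omega}\{\eval_{f(\omega)}(Z^N)\}\leq 1$; since $\rho\geq 0$ the display is at most $\E^{\Omega\sim\pb}[\rho(f(\Omega))]$. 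By the definition of $\pbcred_\prior$ in \eqref{eq:R.Q}, $f(\Omega)\sim\prior$ when $\Omega\sim\pb$, hence $\E^{\Omega\sim\pb}[\rho(f(\Omega))]=\E^{\Theta\sim\prior}\{\rho(\Theta)\}$, a quantity not depending on the choice of $\pb\in\pbcred_\prior$. Taking the supremum over $\pb$ and then over $\prior\in\credal$ and invoking \eqref{eq:envelope} and Definition~\ref{def:regularizer} yields $\uuprob[\eval^{\text{reg}}\{Z^N,f(\Omega)\}]\leq\uprior\rho\leq 1$.

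Next, for \eqref{eq:reg.ville}: again fix $\prior\in\credal$ and $\pb\in\pbcred_\prior$, write $\prob_{\prior,\pb}$ for the ordinary joint law of $(Z^N,\Omega)$ in the highlighted term of \eqref{eq:uuprob}, and note $\eval^{\text{reg}}\{Z^N,f(\Omega)\}\geq 0$ with, by the previous paragraph, $\E_{\prob_{\prior,\pb}}[\eval^{\text{reg}}\{Z^N,f(\Omega)\}]\leq\E^{\Theta\sim\prior}\{\rho(\Theta)\}\leq 1<\infty$. Markov's inequality on the genuine probability measure $\prob_{\prior,\pb}$ then gives, for $\alpha\in(0,1]$,
\[
\prob_{\prior,\pb}\bigl[\eval^{\text{reg}}\{Z^N,f(\Omega)\}>\alpha^{-1}\bigr]\leq\alpha\,\E_{\prob_{\prior,\pb}}\bigl[\eval^{\text{reg}}\{Z^N,f(\Omega)\}\bigr]\leq\alpha\,\E^{\Theta\sim\prior}\{\rho(\Theta)\}.
\]
Taking $\sup$ over $\pb\in\pbcred_\prior$ and then over $\prior\in\credal$ turns the left side into $\uuprob[\eval^{\text{reg}}\{Z^N,f(\Omega)\}>\alpha^{-1}]$ and the right side into $\alpha\,\uprior\rho\leq\alpha$; the boundary cases $\alpha=0$ (the event is empty) and $\alpha=1$ (the bound is trivial) are immediate.

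The conceptual crux, and the only place the machinery of Section~\ref{SS:pullback} is really needed, is the identity $\E^{\Omega\sim\pb}[\rho(f(\Omega))]=\E^{\Theta\sim\prior}\{\rho(\Theta)\}$, which holds exactly because $\pbcred_\prior$ is defined so that the push-forward of $\pb$ through $f$ is $\prior$; Proposition~\ref{prop:pullback} guarantees $\pbcred_\prior$ is a nonempty credal set, so the suprema above range over nonempty sets. Everything else is the law of iterated expectations plus Markov, and the stopping time $N$ causes no difficulty because \eqref{eq:eval.bound} already holds uniformly over stopping times. The one genuine technical point I would flag rather than belabor is measurability of $\theta\mapsto\eval_\theta(\cdot)$ (an infimum of e-processes over $\{\omega:f(\omega)=\theta\}$, whose measurability the paper already sets aside around \eqref{eq:eval.bound}), of $\rho$, and of $\omega\mapsto\E^{Z\sim\prob_\omega}\{\eval_{f(\omega)}(Z^N)\}$; continuity of $f$ together with mild regularity renders these harmless, and I would carry them as standing assumptions.
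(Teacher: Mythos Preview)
Your proof is correct and follows essentially the same route as the paper: unfold $\uuprob$, condition on $\Omega$ and use \eqref{eq:eval.bound} to bound the inner expectation by $1$, push forward via $f$ so that the $\pb$-expectation becomes $\E^{\Theta\sim\prior}\{\rho(\Theta)\}$, take suprema and invoke $\uprior\rho\leq 1$; then apply Markov fiberwise for \eqref{eq:reg.ville}. The only cosmetic difference is that for the tail bound you carry the intermediate inequality $\alpha\,\E^{\Theta\sim\prior}\{\rho(\Theta)\}$ through the suprema to reach $\alpha\,\uprior\rho$, whereas the paper simply cites the already-established bound $\uuprob\,\eval^{\text{reg}}\leq 1$; the content is identical.
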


The above result is related to the``uniformly-randomized Markov inequality'' in \citet[][Theorem~1.2]{ramdas.randomized.markov}.  They prove that, if $X$ and $U$ are independent random variables, with $X \geq 0$ and $U$ stochastically no smaller than $\unif(0,1)$, then $\prob(X / U \geq a^{-1}) \leq a \,\E(X)$ for $a > 0$. 
The connection between their randomized Markov inequality and the result in Theorem~\ref{thm:reg.ville} is most clear in the special case where $\rho$ is defined in terms of the possibility contour $q$ as in Section~\ref{SS:possibilistic.case}.  In such a case, the random variable $q(\Theta)$ is no smaller than $\unif(0,1)$ when $\Theta \sim \prior$, for any prior $\prior$ in the credal set $\credal$.  Since the calibration step $\gamma \circ q(\Theta)$ boosts it further, the proposed regularization is simply dividing the usual e-process by a no-smaller-than-uniform random variable.  

\subsection{Statistical implications}
\label{SS:implications}

Theorem~\ref{thm:reg.ville}'s most important take-away is its implication concerning safe, anytime valid inference.  Indeed, when relevant, non-vacuous prior information is available,  Theorem~\ref{thm:reg.ville} shows how that knowledge can be used to enhance an e-process in such a way that anytime validity is preserved but efficiency is generally gained.  This enhancement is achieved through the incorporation of a regularizer as in \eqref{eq:reg.eprocess} that inflates and deflates the original e-process when the latter is large and small, respectively.  If the goal is to test ``$\Theta \in H$,'' then the regularized e-process-based testing procedure  
\[ \text{reject ``$\Theta \in H$'' based on data $z^n$} \iff \inf_{\theta \in H} \eval^\text{reg}(z^n,\theta) > \alpha^{-1} \]
will be anytime valid (relative to Your prior knowledge) in the sense that 
\begin{align*}
\uuprob(\text{Type~I error}) := & \; \uuprob\{ \text{$f(\Omega) \in H$ and test of `$f(\Omega) \in H$' rejects} \} \\
= & \; \uuprob\Bigl\{ f(\Omega) \in H \text{ and } \inf_{\theta \in H} \eval^\text{reg}(Z^N,\theta) > \alpha^{-1} \Bigr\} \\
\leq & \; \uuprob\bigl\{ \eval^\text{reg}(Z^N,f(\Omega)) > \alpha^{-1} \bigr\} \\
\leq & \; \alpha,
\end{align*}
where the last line follows by \eqref{eq:reg.ville}.  This is a non-trivial generalization of the familiar frequentist Type~I error control so it warrants some remarks.  In the statistics literature, the prevailing viewpoint is that the quantity of interest $\Theta$ is a fixed unknown, so the statement ``$\Theta \in H$'' is absolutely true for some $H$ and absolutely false for others, but nothing more can be said.  This aligns with the vacuous-prior case where, with the exception of the trivial hypotheses $\varnothing$ and $\TT$, Your corpus of knowledge makes exactly the same statements about every hypothesis: $\lprior(H) = 0$ and $\uprior(H)=1$.  That is, there's no evidence supporting the truthfulness of either ``$\Theta \in H$'' or ``$\Theta \not\in H$.'' With this extreme form of prior knowledge, Your corresponding regularizer is $\rho \equiv 1$---so that $\eval^\text{reg}(\cdot, \theta) = \eval_\theta(\cdot)$---and $\uuprob$ simplifies (see Remark~\ref{re:vac.supremum} in Appendix~\ref{A:remarks}) to yield the following:
\begin{align}
\uuprob(\text{Type~I error}) := & \; \uuprob\{ \text{$f(\Omega) \in H$ and test of `$f(\Omega) \in H$' rejects}\} \notag \\
= & \; \sup_{\omega: f(\omega) \in H} \prob_\omega \Bigl\{ \inf_{\theta \in H} \eval_\theta(Z^N) > \alpha^{-1} \Bigr\} \label{eq:vac.supremum}. 
\end{align}
Then the anytime Type~I error control property of the original e-process-based test, derived from \eqref{eq:ville}, is recovered as a special case of Theorem~\ref{thm:reg.ville}.  
Beyond this extreme case, the different $H$'s have varying degrees of belief/plausibility and this is taken into account in the evaluation of the test via the conjunction: ``$f(\Omega) \in H$'' {\em and} ``test of `$f(\Omega) \in H$' rejects.'' In particular, monotonicity of $\uuprob$ implies that $\uuprob(\text{Type~I error}) \leq \uprior(H)$, which means that the test doesn't have to be particularly good for those $H$ with small $\uprior(H)$ to control the (generalized) Type~I error rate.  If (generalized) Type~I error control is easy for those {\em a priori} implausible $H$'s, then that gives the e-process an opportunity to focus its effort on handling the plausible $H$'s more efficiently.  

Similarly, a nominal $100(1-\alpha)$\% confidence region for $\Theta$, based on the regularized e-process applied to data $z^n$, is 
\[ C_\alpha^\text{reg}(z^n) = \bigl\{ \theta \in \TT: \eval^\text{reg}(z^n, \theta) \leq \alpha^{-1} \bigr\}, \]
i.e., the collection of simple null hypotheses that the regularized e-process-based test would not reject at level $\alpha$.  Then the non-coverage (upper) probability is 
\begin{equation}
\label{eq:reg.noncvg}
\uuprob\{ C_\alpha^\text{reg}(Z^N) \not\ni f(\Omega) \} = \uuprob\{ \eval^\text{reg}(Z^N, f(\Omega)) > \alpha^{-1} \} \leq \alpha, 
\end{equation}
which implies that the corresponding lower probability of coverage, i.e., of $C_\alpha^\text{reg}(Z^N) \ni f(\Omega)$, is bounded from below by $1-\alpha$.  This justifies calling $C_\alpha^\text{reg}$ a (generalized) ``anytime $100(1-\alpha)$\% confidence region.'' Like in the discussion above, this is generally different from the usual notion of coverage probability of a confidence set.  In the case of vacuous prior information, however, the regularizer is $\rho \equiv 1$, the corresponding confidence region is $C_\alpha(z^n) = \{ \theta \in \TT: \eval_\theta(z^n) \leq \alpha^{-1} \}$, and inequality  \eqref{eq:reg.noncvg} reduces to 
\[ \sup_\omega \prob_\omega\{ C_\alpha^\text{reg}(Z^N) \not\ni f(\omega) \} \leq \alpha. \]
This, of course, is exactly the familiar anytime coverage probability guarantees offered by e-process-based confidence sets, via \eqref{eq:ville}.  


To summarize, my proposal involves two key steps: first, a regularizer is created and fused with a given e-process in such a way that the resulting regularized e-process tends to be larger at points incompatible with prior knowledge and hence more efficient than the given e-process alone; second, the definition of ``anytime validity'' is correspondingly relaxed, via the prior knowledge-dependent $\uuprob$, to accommodate the broadly larger and more efficient regularized e-process.  Of course, whether a property like ``the method's answer is wrong with small $\uuprob$-probability'' carries weight with readers, peer reviewers, collaborators, etc.~hinges on whether Your prior knowledge incorporated in $\uuprob$ is justified.  The same can be said for any kind of model-based analysis: the conclusions drawn can be no more convincing than the justification given for the assumed model.  But compared to a Bayesian framework that requires prior knowledge be encoded as a precise probability distribution, here I'm placing no serious constraints on the form Your prior knowledge takes; You're free to use whatever $\uprior$ You can justify, even if it's vacuous.  Your target audience is scientifically obligated to scrutinize Your justification, but if Your beliefs warrant being called {\em prior knowledge}, You must have strong justification for them and so passing this scrutiny should only be a matter of explaining Your reasoning clearly.

\subsection{Efficiency gains}
\label{SS:gains}

The goal here is to demonstrate the efficiency that can be gained through the incorporation of partial prior information via my proposed regularized e-process framework.  I'll take a simple model and e-process, so that effort can be focused on the possibilistic prior, the corresponding regularizer, and its effect on efficiency.  I must emphasize that {\em I'm not recommending off-the-shelf use of any particular regularizer}.  It's Your responsibility to determine what, if any, relevant prior information about $\Theta$ is available and how to quantify it.  I'm in no position to say what You should or shouldn't believe about $\Theta$.  

To set the scene, consider a normal mean model with known variance equal to 1.  I'm assuming a parametric model, hence the uncertain model index, $\Omega$, and the quantity of interest, $\Theta$, are the same.  So I'll write $\model = \{ \prob_\theta: \theta \in \TT\}$, where $\prob_\theta = \nm(\theta, 1)$ and $\TT = \RR$.  For the e-process, I'll consider the Bayes factor
\[ \eval_\theta(\cdot) = \frac{\int L_t(\cdot) \, \xi(t) \, dt}{L_\theta(\cdot)}, \]
where $L_\theta$ is the usual Gaussian likelihood function and, in the numerator, $\xi$ is a mixing probability density function on $\TT$.  In what follows, I'll take $\xi$ to be the $\nm(0, v)$ density function, with $v=10$.  This is a version of the so-called Savage--Dickey e-process \citep[e.g.,][]{grunwald.epost}.  The integration can be done in closed-form, so
\[ \eval_\theta(z^n) = (nv + 1)^{1/2} \exp\bigl\{ -\tfrac{n}{2} (\theta - \bar z_n)^2 + \tfrac12 ( \tfrac{n}{nv + 1} ) \, \bar z_n^2 \bigr\}, \quad \theta \in \TT. \]
In words, the e-process is a ratio of the (Bayesian marginal) likelihood under the model where $\Theta$ is different from $\theta$---where ``different from $\theta$'' is quantified by $\xi$---to the likelihood under the model where $\Theta$ equals $\theta$.  It's important to point out that $\xi$ isn't genuine prior information, it's just a (default) choice that's made to define the e-process.  
The mixing distributions may be relatively diffuse, hence my choice of $\xi$'s variance as $v=10$.  

Here I'll describe the construction of a partial prior that takes the form of a possibility measure, based on an assessment of {\em surprise}, as mentioned briefly above.  This is not the only kind of partial prior one can consider, and some additional details and examples are shown in Appendix~\ref{S:emp.efficiency}.  The strategy here, which is quite common, is to assign Your surprise by analogy, by making comparison to familiar probability assignments.  For example, suppose that, for each $t > 0$, You'd be as surprised to learn that $|\Theta| > t$ as you would be to observe $|Y| > t$, where $Y \sim \nm(0,K)$ for a specified $K$.  Matching Your surprise to the latter probability amounts to choosing a prior contour 
\[ q(\theta) 
= 1 - {\tt pchisq}(\theta^2 / K, {\tt df}=1), \quad \theta \in \TT. \]
This is a principled possibility measure construction, and further details can be found in Remark~\ref{re:prob.to.poss} of Appendix~\ref{A:remarks}. 
Importantly, what I just described is entirely different from You taking $\nm(0,K)$ as a prior distribution for $\Theta$, since $\nm(0,K)$ is just one of the many probability measures contained in the possibilistic prior's credal set. 


Figure~\ref{fig:gauss} plots the regularized (and unregularized) log-transformed e-process as a function of $\theta$ for the ``Gaussian prior'' described above, for $K \in \{0.1, 0.2, 0.4, 0.8\}$, and for three different values of the observed sample mean $\bar z$ based on a sample of size $n=5$:
\begin{itemize}
\item $\bar z=0.25$ is consistent with all the priors;
\vspace{-2mm}
\item $\bar z=0.5$ is marginally inconsistent with the priors, and;
\vspace{-2mm}
\item $\bar z=1$ is rather inconsistent with the priors.
\end{itemize}
The black line corresponds to the unregularized e-process, and the four colored lines correspond to the different values of $K$; the dashed horizontal line corresponds to $-\log0.05 \approx 3$, which is the cutoff that determines an e-process's 95\% confidence interval.  As expected, the stronger the prior information (i.e., smaller $K$), the greater the efficiency gains.   

\begin{figure}[t]
\begin{center}
\subfigure[$\bar z=0.25$]{\scalebox{0.6}{\includegraphics{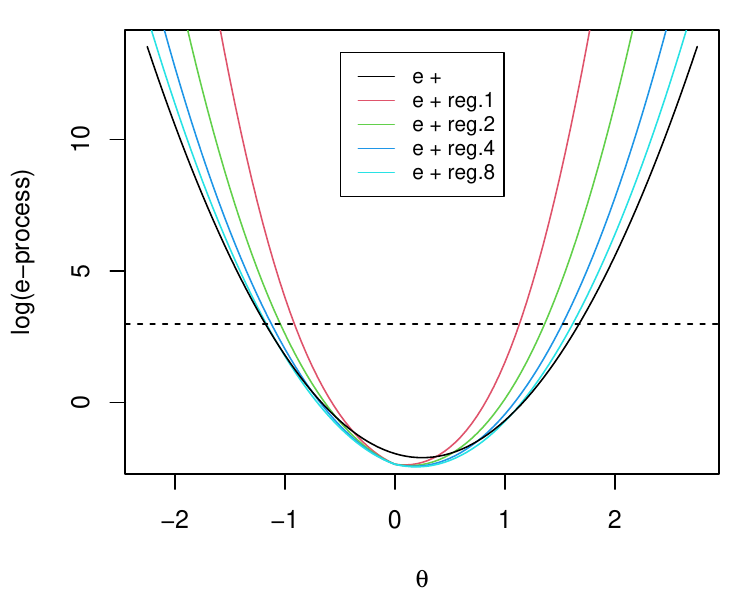}}}
\subfigure[$\bar z=0.5$]{\scalebox{0.6}{\includegraphics{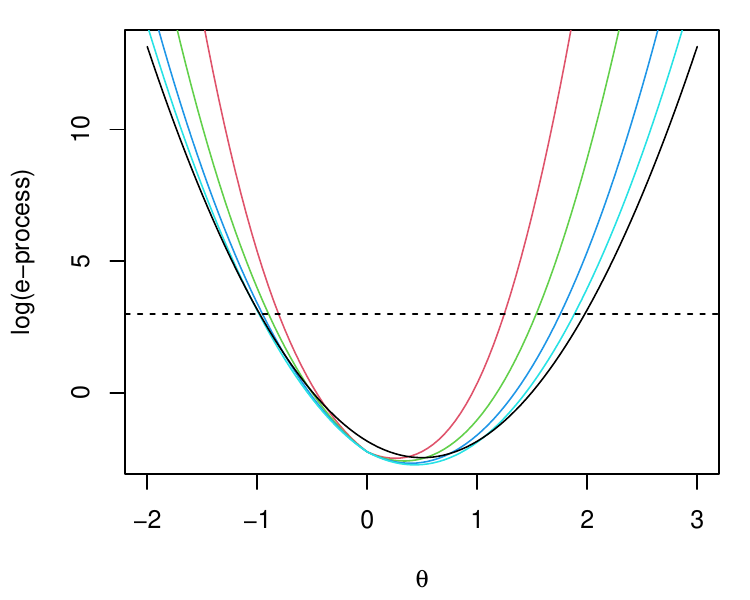}}}
\subfigure[$\bar z=1$]{\scalebox{0.6}{\includegraphics{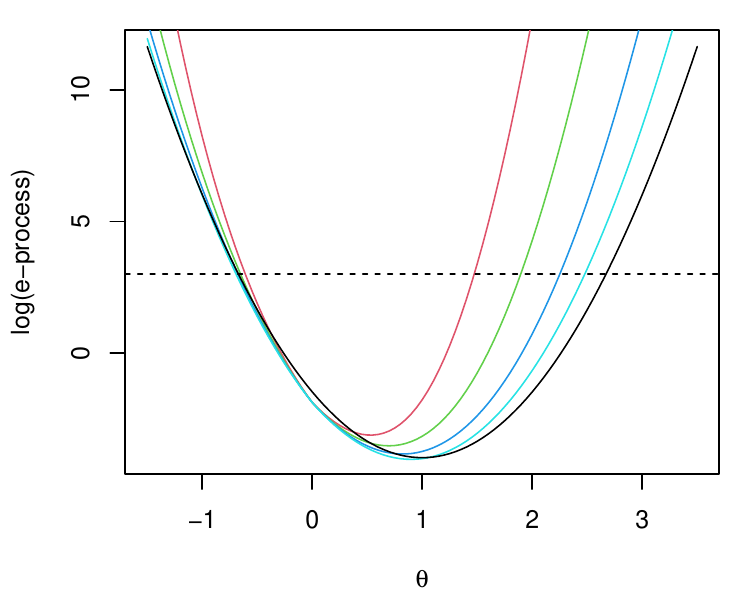}}}
\end{center}
\caption{Plot of $\theta \mapsto \eval^\text{reg}(z^n, \theta)$ for three different data sets $z^n$.}
\label{fig:gauss}
\end{figure}

Another perspective on the efficiency gain concerns the number of sample points required before a false hypothesis can be rejected.  Here I'll consider the hypothesis ``$\Theta=0.7$'' and track the (log) regularized e-process's growth as the sample size increases.  The fewer number of sample points required to reject the false hypothesis, the more efficient the procedure is.  Figure~\ref{fig:gauss.grow} plots the path $n \mapsto \text{avg}\{\log \eval^\text{reg}(z^n, 0.7)\}$, where the average is taken over 1000 replications of the data-generating process.  Clearly, the strongest regularization ($K=0.1$) makes it possible to detect the discrepancy of 0.7 with generally fewer samples, whereas the other priors, which offer weaker regularization, take more samples on average to reach the desired conclusion.  

\begin{figure}[t]
\begin{center}
\scalebox{0.6}{\includegraphics{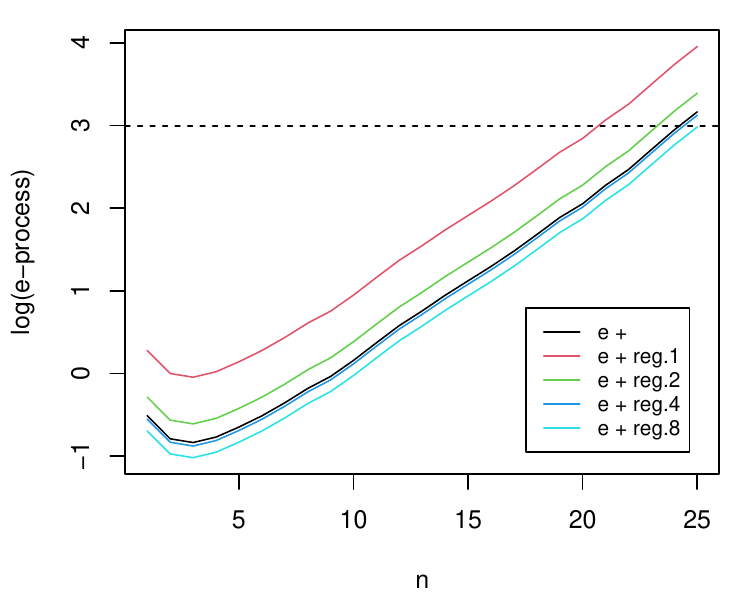}}
\end{center}
\caption{Plot of $n \mapsto \text{avg}\{\log \eval^\text{reg}(z^n, 0.7)\}$ when data are sampled from $\nm(0,1)$.}
\label{fig:gauss.grow}
\end{figure}

\section{Regularized e-uncertainty quantification}
\label{S:reg.uq}

\subsection{Objective}

So far, I've treated inference as the construction of suitable test and confidence set procedures.  In this section, however, I have a more ambitious goal of reliable, data-driven, uncertainty quantification about the uncertain $\Theta$.  

It's a mathematical fact that uncertainty arising in statistical inference generally cannot be quantified reliably using ordinary probability.  Indeed, the {\em false confidence theorem} \citep{balch.martin.ferson.2017} says, roughly, every probabilistic quantification of uncertainty---Bayes, fiducial, etc.---tends to assign high confidence, i.e., high posterior probability, to certain false hypotheses \citep{martin.nonadditive, imchar, martin.belief2024}.  False confidence creates a risk of unreliability, or ``systematically misleading conclusions'' \citep{reid.cox.2014}.  
{\em Inferential models} (IMs), originally developed in \citet{imbasics, imbook}, offer a new framework for possibilistic statistical reasoning that's reliable in the sense of avoiding false confidence; see the recent review in \citet{imreview}.   Remark~\ref{re:why.possibility} in Appendix~\ref{A:remarks} explains why the possibilistic form is right for statistical problems. 



\subsection{Possibilistic IMs}
\label{SS:ims}

My current efforts \citep{martin.partial2, martin.partial3, martin.basu} focus on the construction of {\em possibilistic IMs}, where the IM output takes the mathematical form of a possibility measure as reviewed briefly in Section~\ref{SS:ip}.  The specific proposal put forward in the above references starts with defining the IM's possibility contour, based on observed data $Z^n=z^n$, as
\[ \pi_{z^n}(\theta) = \sup_{\omega: f(\omega) = \theta} \prob_\omega\{ r(Z^n,\theta) \leq r(z^n,\theta)\}, \quad \theta \in \TT, \]
where $r(z^n,\theta)$ provides a ranking of the parameter value $\theta$ in terms of its compatibility with $z$---large values indicate higher compatibility.  For example, in \citet{martin.partial2, martin.partial3}, I recommended taking $r$ to be the relative profile likelihood 
\[ R(z^n,\theta) = \frac{\sup_{\omega: f(\omega)=\theta} L_{z^n}(\omega)}{\sup_\omega L_{z^n}(\omega)}, \quad \theta \in \TT, \]
with $\omega \mapsto L_{z^n}(\omega)$ the likelihood function corresponding to the ``iid $\prob_\omega$'' model. Then the possibilistic IM's upper probability output is, as in \eqref{eq:upper.p.poss}, given by optimization: 
\begin{equation}
\label{eq:upi}
\uPi_{z^n}(H) = \sup_{\theta \in H} \pi_{z^n}(\theta), \quad H \subseteq \TT. 
\end{equation} 
The interpretation is that a small $\uPi_{z^n}(H)$ means there's strong evidence in data $z^n$ against the truthfulness of $H$.  That is, a small upper probability assigned to $H$ implies doubt and, therefore, I'd be inclined to ``reject'' $H$.  But how small is ``small''?  Clearly some calibration of the IM's numerical output is needed in order for this line of reasoning to be reliable, i.e., to ensure that I don't systematically doubt true hypotheses or buttress false hypotheses.  The possibilistic IM's calibration property is
\begin{equation}
\label{eq:valid}
\sup_{\omega: f(\omega) \in H} \prob_\omega\{ \uPi_{Z^n}(H) \leq \alpha \} \leq \alpha, \quad \alpha \in [0,1], \quad H \subseteq \TT. 
\end{equation}
In words, \eqref{eq:valid} says that the IM assigning small upper probability values to true hypotheses about $\Theta$ is itself a small probability event.  Since there's an explicit link between the two notions of ``small,'' this offers the calibration needed to avoid systematically erroneous inferences.  
The analogous property concerning the IM's lower probabilities---recall the conjugacy relationship, $\lPi_{z^n}(H) = 1 - \uPi_{z^n}(H^c)$, from Section~\ref{SS:ip}---reads as follows: 
\[ \sup_{\omega: f(\omega) \not\in H} \prob_\omega\{ \lPi_{Z^n}(H) \geq 1-\alpha \} \leq \alpha, \quad \alpha \in [0,1], \quad H \subseteq \TT. \]
This latter result says that the IM assigning large lower probability---or ``confidence''---to false hypotheses is a small-probability event; now it should be clear why I say that the IM is safe from the risk of false confidence.  

It's based on the property \eqref{eq:valid} that I say the IM is {\em valid} and, in turn, that uncertainty quantification based on the IM output is reliable.  Among other things, it's straightforward to establish that test and confidence procedures derived from the IM output achieve the usual frequentist error rate control guarantees.  


\subsection{Connection to e-processes}
\label{SS:im.e}

The discussion above focused on the case of a fixed sample size.  It's well-known, however, that the fixed-$n$ sampling distribution properties---such as validity in \eqref{eq:valid}---generally fail when the stopping rule $N$ is data-dependent.  
If the stopping rule $N$ that's used is {\em known}, then this can be easily incorporated into the IM construction such that validity is achieved.  
The trouble, of course, is that the $N$ employed in the data-collection process is often {\em unknown}.
\citet{martin.basu} suggested a conceptually simple work-around that interprets $N$ as a ``nuisance parameter.'' Then the general rules in \citet{martin.partial3} for handling nuisance parameters immediately suggest a new IM contour 
\[ \pi_{z^n}(\theta) = \sup_N \sup_{\omega: f(\omega)=\theta} \prob_\omega\{ r(Z^N,\theta) \leq r(z^n,\theta)\}, \quad \theta \in \TT, \]
where the outermost supremum is over all the stopping rules in consideration (that are consistent with the observation ``$N=n$'').  Of course, direct computation of the right-hand side above can be challenging, depending on the complexity of the set of stopping rules in consideration.  It's here that e-processes come in handy.  

Recall that the ranking function $r$ in the possibilistic IM construction is quite flexible.  One alternative to the proposal above is to take $r$ as the reciprocal of an e-process: $r(z^n, \theta) = \eval_\theta(z^n)^{-1}$. In that case, the usual Ville's inequality gives 
\begin{align*}
\pi_{z^n}(\theta) & = \sup_N \sup_{\omega: f(\omega)=\theta} \prob_\omega\{ r(Z^N,\theta) \leq r(z^n,\theta)\} \\
& = \sup_N \sup_{\omega: f(\omega)=\theta} \prob_\omega\{ \eval_\theta(Z^N) \geq \eval_\theta(z^n) \} \\
& \leq 1 \wedge \eval_\theta(z^n)^{-1}.
\end{align*}
Set $\pi_{z^n}^\eval(\theta) = 1 \wedge \eval_\theta(z^n)^{-1}$ to be the upper bound, so that 
\[ \pi_{z^n}(\theta) \leq \pi_{z^n}^\eval(\theta) := 1 \wedge \eval_\theta(z^n)^{-1}, \quad \theta \in \TT. \]
This upper bound corresponds to what \citet{grunwald.epost} calls his capped {\em e-posterior}.  But there are two further observations about this bound that are worth noting.  First, the bound itself is typically (see Remark~\ref{re:typically} in Appendix~\ref{A:remarks}) a possibility contour.  So, there's a corresponding possibilistic IM, with coherent upper probability $\uPi_{z^n}^\eval$ defined via optimization as in \eqref{eq:upi}.  This determines an {\em e-possibilistic IM} for uncertainty quantification about $\Theta$.  Second, that $\pi_{z^n}^\eval$ is an upper bound of the contour $\pi_{z^n}$, and that the latter determines an anytime valid IM, immediately implies that the IM corresponding to the former is anytime valid too.  This discussion is summarized as

\begin{thm}
\label{thm:valid.vac}
Given an e-process $\eval$, the corresponding e-possibilistic IM with upper probability determined by optimization of the contour $\pi_{z^n}^\eval$, i.e.,  
\[ \uPi_{z^n}^\eval(H) = \sup_{\theta \in H} \pi_{z^n}^\eval(\theta), \quad H \subseteq \TT, \]
is anytime valid in the sense that 
\begin{equation}
\label{eq:valid.vac}
\sup_{\omega: f(\omega) \in H} \prob_\omega\{ \uPi_{Z^N}^\eval(H) \leq \alpha \} \leq \alpha, \quad \text{all $\alpha \in [0,1]$, all $N$, all $H \subseteq \TT$}. 
\end{equation}
\end{thm}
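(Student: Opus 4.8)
The plan is to prove \eqref{eq:valid.vac} by a direct appeal to the composite-hypothesis Ville inequality \eqref{eq:ville}, re-using the contour domination already established in the paragraph preceding the theorem. Fix a stopping time $N$, a hypothesis $H \subseteq \TT$, a level $\alpha \in [0,1]$, and an index $\omega \in \OO$ with $f(\omega) \in H$; it suffices to bound $\prob_\omega\{\uPi_{Z^N}^\eval(H) \leq \alpha\}$ by $\alpha$, since the left-hand side of \eqref{eq:valid.vac} is the supremum of such probabilities over all such $\omega$. The one structural fact I would extract is monotonicity of the optimization defining $\uPi^\eval$: because $f(\omega) \in H$, we have $\uPi_{z^n}^\eval(H) = \sup_{\theta \in H} \pi_{z^n}^\eval(\theta) \geq \pi_{z^n}^\eval(f(\omega)) = 1 \wedge \eval_{f(\omega)}(z^n)^{-1}$ for every $z^n$, so the event $\{\uPi_{Z^N}^\eval(H) \leq \alpha\}$ is contained in $\{1 \wedge \eval_{f(\omega)}(Z^N)^{-1} \leq \alpha\}$.

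Next I would dispose of the boundary cases and rewrite the event. For $\alpha = 1$ the bound is trivial, as any probability is at most $1$. For $\alpha = 0$, property \eqref{eq:eval.bound} gives $\E_\omega\{\eval_{f(\omega)}(Z^N)\} \leq 1$, hence $\eval_{f(\omega)}(Z^N) < \infty$ holds $\prob_\omega$-almost surely, so $1 \wedge \eval_{f(\omega)}(Z^N)^{-1} > 0$ a.s.\ and the event in question is $\prob_\omega$-null. For $\alpha \in (0,1)$ the inequality $1 \wedge x^{-1} \leq \alpha$ is equivalent to $x \geq \alpha^{-1}$ (check the cases $x < 1$, $x \geq 1$, $x = \infty$), so
\[ \prob_\omega\{\uPi_{Z^N}^\eval(H) \leq \alpha\} \leq \prob_\omega\{1 \wedge \eval_{f(\omega)}(Z^N)^{-1} \leq \alpha\} = \prob_\omega\{\eval_{f(\omega)}(Z^N) \geq \alpha^{-1}\}. \]

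Finally I would invoke \eqref{eq:ville} with $\theta = f(\omega)$: since $\omega$ is itself one of the indices satisfying $f(\cdot) = f(\omega)$, the right-hand side above is at most $\sup_{\omega': f(\omega') = f(\omega)} \prob_{\omega'}\{\eval_{f(\omega)}(Z^N) \geq \alpha^{-1}\} \leq \alpha$; taking the supremum over $\omega$ with $f(\omega) \in H$ then delivers \eqref{eq:valid.vac}. In effect this just repackages the already-derived domination $\pi_{z^n}(\theta) \leq \pi_{z^n}^\eval(\theta) = 1 \wedge \eval_\theta(z^n)^{-1}$ together with Ville's inequality; alternatively one could cite that the nuisance-$N$ contour $\pi_{z^n}$ determines an anytime valid IM and note that replacing a valid contour by a pointwise-larger one only enlarges every $\uPi$-value, hence cannot destroy \eqref{eq:valid.vac}. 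I do not foresee a substantive obstacle here — the only points needing care are the boundary values $\alpha \in \{0,1\}$ and the standing (and, following the paper's convention, suppressed) measurability caveat attached to the suprema over $\theta \in H$ and over $\{\omega': f(\omega') = \theta\}$.
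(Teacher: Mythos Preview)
Your proposal is correct and follows essentially the same route as the paper's proof: monotonicity of $\uPi^\eval$ gives $\uPi_{Z^N}^\eval(H) \geq \pi_{Z^N}^\eval(f(\omega))$, then the definition of the contour converts $\{\pi_{Z^N}^\eval(f(\omega)) \leq \alpha\}$ into $\{\eval_{f(\omega)}(Z^N) \geq \alpha^{-1}\}$, and Ville's inequality \eqref{eq:ville} finishes. Your explicit treatment of the boundary cases $\alpha \in \{0,1\}$ is extra care the paper omits, and your closing remark about the domination $\pi \leq \pi^\eval$ is exactly the one-line alternative the paper also notes.
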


%

As in Section~\ref{SS:ims}, this calibration property is important because, without it, there'd be no meaningful justification for any particular interpretation of the numerical values an e-possibilistic IM assigns to various inputs $(z^n, H)$.  But in light of Theorem~\ref{thm:valid.vac}, the same reliability-guaranteeing ``no-false-confidence'' statement made above for the simple possibilistic IM also holds here for the new e-possibilistic IM.  

One important consequence of anytime validity is that one can derive test procedures from the IM and these will inherit the desired frequentist properties.  Indeed, for a given hypothesis $H \subset \TT$ about $\Theta$, and a given level $\alpha \in [0,1]$, the test procedure 
\[ \text{reject ``$\Theta \in H$'' if and only if $\uPi_{Z^N}^\eval(H) \leq \alpha$} \]
controls the frequenist Type~I error rate at the specified level $\alpha$.  This result is ``obvious'' because $\uPi_{Z^N}^\eval(H)$ can be small if and only if $\eval_\theta(Z^N)$ is large for all $\theta \in H$: in particular, 
\[ \uPi_{Z^N}^\eval(H) \leq \alpha \iff \inf_{\theta \in H} \eval_\theta(Z^N) \geq \alpha^{-1}. \]
If the hypothesis is true, i.e., if $\Theta = f(\Omega) \in H$, then the right-most event in the above display implies $\eval_{f(\Omega)}(Z^N) \geq \alpha^{-1}$, which, by Ville's inequality \eqref{eq:ville}, has $\prob_\Omega$-probability no more than $\alpha$, uniformly over stopping times $N$.  

For the case of vacuous prior information, there's a consequence of anytime validity \eqref{eq:valid.vac} that deserves mention; this property is used in the proof of Theorem~\ref{thm:valid.vac}.  For the more general prior information cases to be considered below, this new property is actually stronger than \eqref{eq:valid.vac}, but here the two are equivalent.  

\begin{cor}
\label{cor:strong.valid.vac}
Given an e-process $\eval$, the corresponding e-possibilistic IM is strongly anytime valid in the sense that its contour satisfies 
\begin{equation}
\label{eq:strong.valid.vac}
\sup_{\omega \in \OO} \prob_\omega\bigl[ \pi_{Z^N}^\eval\{ f(\omega) \} \leq \alpha \bigr] \leq \alpha, \quad \text{all $\alpha \in [0,1]$, all $N$}.
\end{equation}
\end{cor}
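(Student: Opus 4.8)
The plan is to reduce \eqref{eq:strong.valid.vac} to a pointwise-in-$\omega$ statement and then quote the composite Ville's inequality \eqref{eq:ville}: the corollary is really just the observation that anytime validity of $\eval$ at the ``correct'' hypothesis survives the capping transformation $s \mapsto 1 \wedge s^{-1}$ and the passage to the profile-style contour $\pi_{z^n}^\eval$.

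First I would dispose of the boundary value $\alpha = 1$, where the probability in \eqref{eq:strong.valid.vac} is at most $1 = \alpha$ trivially, and so assume $\alpha \in [0,1)$. For any $\theta \in \TT$ and any realized data, since $1 \wedge s \leq \alpha < 1$ forces $s \leq \alpha$, one has the set identity
\[
\bigl\{ \pi_{z^n}^\eval(\theta) \leq \alpha \bigr\} = \bigl\{ 1 \wedge \eval_\theta(z^n)^{-1} \leq \alpha \bigr\} = \bigl\{ \eval_\theta(z^n) \geq \alpha^{-1} \bigr\},
\]
with the usual conventions $0^{-1} = \infty$ and $\infty^{-1} = 0$ taking care of the cases $\eval_\theta(z^n) \in \{0,\infty\}$ and $\alpha = 0$.

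Next, fix an arbitrary $\omega \in \OO$ and put $\theta = f(\omega)$. Since $\omega$ lies in the index set $\{\omega' \in \OO : f(\omega') = \theta\}$ over which the infimum defining the composite e-process $\eval_\theta$ is taken, the composite Ville's inequality \eqref{eq:ville} gives, for every stopping time $N$,
\[
\prob_\omega\bigl\{ \eval_\theta(Z^N) \geq \alpha^{-1} \bigr\} \leq \sup_{\omega' : f(\omega') = \theta} \prob_{\omega'}\bigl\{ \eval_\theta(Z^N) \geq \alpha^{-1} \bigr\} \leq \alpha.
\]
Combining this with the set identity applied at $z^n = Z^N$ yields $\prob_\omega\bigl[ \pi_{Z^N}^\eval\{f(\omega)\} \leq \alpha \bigr] \leq \alpha$ for each $\omega$, and taking the supremum over $\omega \in \OO$ gives \eqref{eq:strong.valid.vac}.

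There is no serious obstacle here: the only points needing care are the boundary values $\alpha \in \{0,1\}$ and the measurability of the infimum defining $\eval_\theta$, the latter handled exactly as already acknowledged in Section~\ref{SS:eprocess}. I would also remark that, in contrast to Theorem~\ref{thm:valid.vac}, this corollary concerns only the value of the contour $\pi_{z^n}^\eval$ at the true parameter, so it does not require $\pi_{z^n}^\eval$ to be a bona fide possibility contour; in particular the normalization $\sup_\theta \pi_{z^n}^\eval(\theta) = 1$ plays no role in the argument.
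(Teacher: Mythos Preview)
Your argument is correct and follows essentially the same route as the paper: the paper does not give Corollary~\ref{cor:strong.valid.vac} its own proof but embeds the key step in the proof of Theorem~\ref{thm:valid.vac}, namely the equivalence $\pi_{z^n}^\eval(f(\omega)) \leq \alpha \iff \eval_{f(\omega)}(z^n) \geq \alpha^{-1}$ followed by an appeal to \eqref{eq:ville}. You are in fact slightly more careful than the paper in isolating the boundary case $\alpha = 1$, where the displayed ``$\iff$'' in the paper's proof of Theorem~\ref{thm:valid.vac} is not literally an equivalence, though the conclusion is trivial there anyway.
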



Two comments related to Corollary~\ref{cor:strong.valid.vac} are in order.  First, similar to the points about the construction of anytime valid testing procedures, it's straightforward to construct confidence sets too.  Given a significance level $\alpha \in [0,1]$, a $100(1-\alpha)$\% confidence set derived from the e-process-based IM is the $\alpha$-level set defined by the contour:
\[ C_\alpha^\eval(Z^N) = \{ \theta \in \TT: \pi_{Z^N}^\eval(\theta) > \alpha\}. \]
Then strong anytime validity implies that this is, indeed, a genuine anytime valid confidence set in the sense that $\sup_{\omega \in \OO} \prob_\omega\{ C_\alpha^\eval(Z^N) \not\ni f(\omega) \} \leq \alpha$ for all stopping times $N$.
But, by definition of $\pi^\eval$, it's easy to see that $C_\alpha^\eval$ is identical to
the (unregularized) confidence set determined by the e-process $\eval$.  Then the above non-coverage probability bound follows immediately from the relevant properties of $\eval$ presented above.  

Second, to see why this strong anytime validity \eqref{eq:strong.valid.vac} is ``stronger'' than anytime validity \eqref{eq:valid.vac}, note that monotonicity of possibility measures implies
\begin{equation}
\label{eq:monotone}
\uPi_{Z^N}^\eval(H) \geq \uPi_{Z^N}^\eval(\{f(\omega)\}) \equiv \pi_{Z^N}^\eval(f(\omega)) \quad \text{for all $H$ with $H \ni f(\omega)$}.
\end{equation} 
Interestingly, the above holds even for hypotheses $H$ that depend on data $Z^N$ in some way, e.g., if an adversary with access to the data $Z^N$ is trying to dupe the statistician by selecting a ``most difficult'' hypothesis $H$ post hoc.  This suggests that strong anytime validity is equivalent to a {\em uniform-in-hypotheses} version of anytime validity, and the following corollary states this explicitly; see, also, \citet{cella.martin.probing}.  Note the difference between \eqref{eq:uniform.valid.vac} below and \eqref{eq:valid.vac} above: the former has varying $H$ on the {\em inside} of the probability statement whereas the latter has varying $H$ on the {\em outside}.  

\begin{cor}
\label{cor:uniform.vac}
The e-process-based possibilistic IM constructed above is uniformly anytime valid in the sense that 
\begin{equation}
\label{eq:uniform.valid.vac}
\sup_{\omega \in \OO} \prob_\omega\{ \uPi_{Z^N}^\eval(H) \leq \alpha \text{ for some $H$ with $H \ni f(\omega)$} \} \leq \alpha, \quad \text{all $\alpha \in [0,1]$, all $N$}. 
\end{equation}
\end{cor}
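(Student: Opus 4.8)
The plan is to reduce the uniform-in-hypotheses statement \eqref{eq:uniform.valid.vac} to the strong anytime validity already recorded in Corollary~\ref{cor:strong.valid.vac}, exploiting the monotonicity observation \eqref{eq:monotone}. Concretely, I would first argue that, for each fixed $\omega \in \OO$ and each realized sample path, the event
\[
\bigl\{ \uPi_{Z^N}^\eval(H) \leq \alpha \text{ for some $H$ with $H \ni f(\omega)$} \bigr\}
\]
coincides with the single event $\bigl\{ \pi_{Z^N}^\eval(f(\omega)) \leq \alpha \bigr\}$ that appears in \eqref{eq:strong.valid.vac}.

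For the inclusion ``$\subseteq$'', suppose there is some $H$ containing $f(\omega)$ with $\uPi_{Z^N}^\eval(H) \leq \alpha$. Since $\{f(\omega)\} \subseteq H$, monotonicity of the possibility measure gives $\pi_{Z^N}^\eval(f(\omega)) = \uPi_{Z^N}^\eval(\{f(\omega)\}) \leq \uPi_{Z^N}^\eval(H) \leq \alpha$, which is exactly the content of \eqref{eq:monotone}. For the reverse inclusion, if $\pi_{Z^N}^\eval(f(\omega)) \leq \alpha$ then the singleton $H = \{f(\omega)\}$ is itself a witness, because $\uPi_{Z^N}^\eval(\{f(\omega)\}) = \pi_{Z^N}^\eval(f(\omega))$. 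So the two events are literally equal---in particular, the ``for some $H$'' event is measurable even though the quantifier ranges over an uncountable family of (possibly data-dependent) hypotheses, since it is just the measurable event $\{\pi_{Z^N}^\eval(f(\omega)) \leq \alpha\}$ in disguise.

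Given this identification, the proof finishes in one line: taking probabilities under $\prob_\omega$ and then the supremum over $\omega \in \OO$,
\[
\sup_{\omega \in \OO} \prob_\omega\bigl\{ \uPi_{Z^N}^\eval(H) \leq \alpha \text{ for some $H \ni f(\omega)$} \bigr\}
= \sup_{\omega \in \OO} \prob_\omega\bigl[ \pi_{Z^N}^\eval\{f(\omega)\} \leq \alpha \bigr] \leq \alpha,
\]
the last inequality being precisely \eqref{eq:strong.valid.vac} from Corollary~\ref{cor:strong.valid.vac}, which holds uniformly over all stopping times $N$.

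I do not anticipate a real obstacle here: the only subtle point is the ``$\supseteq$'' direction, where one has to remember that singletons are admissible hypotheses so that the contour value is itself attainable as an upper probability; once that is in hand the argument is purely set-theoretic plus an appeal to the already-proved corollary. If one wanted an alternative, more self-contained route, I would instead invoke the equivalence $\uPi_{z^n}^\eval(H) \leq \alpha \iff \inf_{\theta \in H}\eval_\theta(z^n) \geq \alpha^{-1}$ displayed just before Corollary~\ref{cor:strong.valid.vac}: the ``for some $H \ni f(\omega)$'' event then becomes $\{\eval_{f(\omega)}(Z^N) \geq \alpha^{-1}\}$ (the hardest hypothesis is the singleton at the truth), and Ville's inequality \eqref{eq:ville} applied at $\theta = f(\omega)$ gives the bound directly.
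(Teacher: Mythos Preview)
Your proposal is correct and follows essentially the same route as the paper: both arguments establish the equivalence $\{\uPi_{Z^N}^\eval(H)\leq\alpha\text{ for some }H\ni f(\omega)\}=\{\pi_{Z^N}^\eval(f(\omega))\leq\alpha\}$ via monotonicity in one direction and the singleton witness in the other, then invoke Corollary~\ref{cor:strong.valid.vac}. Your additional remark on measurability and the alternative route through Ville's inequality are nice touches but not needed for the comparison.
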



Therefore, the e-possibilistic IM constructed here offers reliable, anytime valid uncertainty quantification that's safe not just against fixed hypotheses as in \eqref{eq:valid.vac}, but also against possibly adversarial or otherwise data-dependent hypotheses as in \eqref{eq:uniform.valid.vac}.  This is important for at least two reasons.  First, investigators feeling ``publish-or-perish'' pressures might succumb to temptations to explore for hypotheses that are incompatible with their data, to secure a statistically significant result.  A commitment to using e-possibilistic IMs prevents these sociological temptations from causing harm. Second, as \citet{mayo.book.2018} articulates, investigators yearn for more than null hypothesis significance tests---they want to probe 
for hypotheses that might be compatible with their data, without risking unreliability.  Multiplicity corrections can't accommodate this.

\subsection{Partial priors and regularization}
\label{SS:euq.properties}

Section~\ref{S:reg.eprocess} showed how prior knowledge can be encoded as a regularizer and then combined with a given e-process such that the resulting regularized e-process is anytime valid in a relaxed sense.  The next step is to flip this regularized e-process into a regularized e-possibilistic IM and to establish the corresponding anytime validity properties in the context of uncertainty quantification.  The remainder of this subsection details these next steps.  Section~\ref{SS:euq.behavior} below deals with some more nuanced behavioral properties.

Define the regularized e-possibilistic IM contour function for $\Theta$, given $z^n$, as 
\begin{equation}
\label{eq:contour.reg}
\pi_{z^n}^{\eval \times \rho}(\theta) = 1 \wedge \eval^\text{reg}(z^n, \theta)^{-1}, \quad \theta \in \TT. 
\end{equation}
The corresponding possibility lower and upper probabilities are denoted as $\lPi_{z^n}^{\eval \times \rho}$ and $\uPi_{z^n}^{\eval \times \rho}$, respectively, with the latter defined via optimization and the former via conjugacy, as usual.  By definition of $\pi^{\eval \times \rho}$ above, the level sets 
\[ C_\alpha^{\eval \times \rho}(z^n) = \{ \theta \in \TT: \pi_{z^n}^{\eval \times \rho}(\theta) > \alpha \}, \quad \alpha \in [0,1], \]
are identical to the regularized e-process-based confidence regions $C_\alpha^\text{reg}$ in Section~\ref{SS:implications}.  Therefore, the sets $C_\alpha^{\eval \times \rho}$ inherit the same non-coverage upper probability bound as in \eqref{eq:reg.noncvg}, hence can be referred to as (generalized) ``anytime valid confidence regions.'' 

Theorem~\ref{thm:strong.valid} below extends Corollary~\ref{cor:strong.valid.vac} to the regularized e-possibilistic IM case.  It's an immediate consequence of Theorem~\ref{thm:reg.ville} and the definition of $\pi^{\eval \times \rho}$.  Recall the upper joint distribution $\uuprob$ for $(Z^N,\Omega)$, defined in \eqref{eq:uuprob} above.  

\begin{thm}
\label{thm:strong.valid}
Given an e-process $\eval$ and regularizer $\rho$, the corresponding regularized e-possibilistic IM with contour \eqref{eq:contour.reg} is strongly anytime valid in the sense that it satisfies 
\begin{equation}
\label{eq:strong.valid}
\uuprob\bigl[ \pi_{Z^N}^{\eval \times \rho}\{ f(\Omega) \} \leq \alpha \bigr] \leq \alpha, \quad \text{all $\alpha \in [0,1]$, all $N$}.
\end{equation}
\end{thm}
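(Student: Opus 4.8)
The plan is to reduce the claimed contour inequality to the regularized Ville's inequality \eqref{eq:reg.ville} from Theorem~\ref{thm:reg.ville}. First I would dispose of the boundary case $\alpha = 1$, where $\uuprob[\,\cdot\,] \leq 1$ holds automatically because $\uuprob$ is an upper probability. For $\alpha \in [0,1)$ I would unpack the definition \eqref{eq:contour.reg}: since $\pi_{z^n}^{\eval \times \rho}(\theta) = 1 \wedge \eval^\text{reg}(z^n,\theta)^{-1}$, the event $\{\pi_{Z^N}^{\eval \times \rho}\{f(\Omega)\} \leq \alpha\}$ coincides, under the usual convention $0^{-1} = \infty$, with $\{\eval^\text{reg}(Z^N, f(\Omega)) \geq \alpha^{-1}\}$. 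Indeed, when $\eval^\text{reg}(Z^N,f(\Omega)) \geq 1$ the cap is inactive and $\pi_{Z^N}^{\eval\times\rho}\{f(\Omega)\} = \eval^\text{reg}(Z^N,f(\Omega))^{-1} \leq \alpha$ iff $\eval^\text{reg}(Z^N,f(\Omega)) \geq \alpha^{-1}$; and when $\eval^\text{reg}(Z^N,f(\Omega)) < 1$ the contour equals $1 > \alpha$ while $\eval^\text{reg}(Z^N,f(\Omega)) < 1 < \alpha^{-1}$, so neither event occurs. It therefore suffices to prove $\uuprob[\eval^\text{reg}(Z^N, f(\Omega)) \geq \alpha^{-1}] \leq \alpha$.

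The only gap between this and Theorem~\ref{thm:reg.ville} is the strict inequality in \eqref{eq:reg.ville}, which controls $\uuprob[\eval^\text{reg}(Z^N, f(\Omega)) > \alpha^{-1}]$ rather than the weak-inequality event. I would close this gap with a standard limiting argument: pick a sequence $\alpha_k \downarrow \alpha$ with $\alpha_k \in (\alpha, 1)$ (taking $\alpha_k \downarrow 0$ when $\alpha = 0$). Then $\alpha_k^{-1} < \alpha^{-1}$, so $\{\eval^\text{reg}(Z^N,f(\Omega)) \geq \alpha^{-1}\} \subseteq \{\eval^\text{reg}(Z^N,f(\Omega)) > \alpha_k^{-1}\}$, and monotonicity of $\uuprob$---which holds because, per \eqref{eq:uuprob}, $\uuprob$ is a supremum over $\credal$ and the $\pbcred_\prior$ of ordinary (monotone) joint expectations---combined with \eqref{eq:reg.ville} gives $\uuprob[\eval^\text{reg}(Z^N,f(\Omega)) \geq \alpha^{-1}] \leq \uuprob[\eval^\text{reg}(Z^N,f(\Omega)) > \alpha_k^{-1}] \leq \alpha_k$. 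Letting $k \to \infty$ yields $\leq \alpha$. This also settles $\alpha = 0$: the right-hand side tends to $0$, and $\{\pi_{Z^N}^{\eval\times\rho}\{f(\Omega)\} = 0\}$ forces $\eval^\text{reg}(Z^N,f(\Omega)) = \infty$, which lies inside every set $\{\eval^\text{reg}(Z^N,f(\Omega)) > \alpha_k^{-1}\}$.

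I don't anticipate a genuine obstacle, consistent with the remark in the text that the result is an immediate consequence of Theorem~\ref{thm:reg.ville}. The only point needing a little care is the strict-versus-weak inequality passage just described, and---as throughout the paper---the tacit measurability of the events in question (inherited from the infimum defining the composite e-process $\eval_\theta$ and its evaluation at the random $f(\Omega)$), which I would handle by the same convention used around \eqref{eq:eval.bound}--\eqref{eq:ville} and in the proof of Theorem~\ref{thm:reg.ville} rather than re-deriving it.
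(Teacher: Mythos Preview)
Your proposal is correct and matches the paper's approach: the text states that Theorem~\ref{thm:strong.valid} is an immediate consequence of Theorem~\ref{thm:reg.ville} together with the definition \eqref{eq:contour.reg}, and that is precisely the reduction you carry out. Your additional care with the strict-versus-weak inequality passage and the boundary cases $\alpha\in\{0,1\}$ makes explicit a detail the paper leaves implicit, but the underlying argument is the same.
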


This is a generalization of Corollary~\ref{cor:strong.valid.vac} because, if the prior information is vacuous and $\rho \equiv 1$, then $\pi_{Z^N}^{\eval \times \rho} \equiv \pi_{Z^N}^\eval$ and the upper joint distribution $\uuprob$ reduces to the supremum of the $\prob_\omega$-probabilities over all $\omega \in \OO$ as in \eqref{eq:strong.valid.vac}. 


\begin{cor}
\label{cor:strong.valid}
For the same regularized e-possibilistic IM considered in Theorem~\ref{thm:strong.valid}, the following hold for all thresholds $\alpha \in [0,1]$ and all stopping times $N$:
\begin{align*}
\uuprob\bigl\{ \text{there exists $H$ with $H \ni f(\Omega)$ and $\uPi_{Z^N}^{\eval \times \rho}(H) \leq \alpha$} \bigr\} & \leq \alpha \\
\uuprob\bigl\{ \text{$H \ni f(\Omega)$ and $\uPi_{Z^N}^{\eval \times \rho}(H) \leq \alpha$} \bigr\} & \leq \alpha, \quad H \subseteq \TT. 
\end{align*}
\end{cor}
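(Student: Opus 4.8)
The plan is to obtain both displayed inequalities as immediate consequences of Theorem~\ref{thm:strong.valid}, using nothing more than monotonicity of the upper probability $\uuprob$ and monotonicity of the possibility measures $\uPi_{z^n}^{\eval \times \rho}$. This is the exact analogue, in the regularized/partial-prior setting, of how Corollaries~\ref{cor:strong.valid.vac} and~\ref{cor:uniform.vac} were read off from the contour-level statement~\eqref{eq:strong.valid.vac} in the vacuous-prior case via the monotonicity relation~\eqref{eq:monotone}.

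First I would record the elementary pointwise fact that, since $\uPi_{z^n}^{\eval \times \rho}(H) = \sup_{\theta \in H} \pi_{z^n}^{\eval \times \rho}(\theta)$, any $H$ with $H \ni f(\omega)$ satisfies $\uPi_{z^n}^{\eval \times \rho}(H) \geq \pi_{z^n}^{\eval \times \rho}(f(\omega))$, with equality when $H = \{f(\omega)\}$. For the second displayed inequality, fix $H \subseteq \TT$; the pointwise fact shows that the event $\{f(\Omega) \in H\} \cap \{\uPi_{Z^N}^{\eval \times \rho}(H) \leq \alpha\}$ is contained in the event $\{\pi_{Z^N}^{\eval \times \rho}(f(\Omega)) \leq \alpha\}$, so monotonicity of $\uuprob$ followed by Theorem~\ref{thm:strong.valid} bounds its $\uuprob$-probability by $\alpha$. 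For the first displayed inequality, the same pointwise fact shows that ``there exists $H$ with $H \ni f(\Omega)$ and $\uPi_{Z^N}^{\eval \times \rho}(H) \leq \alpha$'' holds if and only if $\pi_{Z^N}^{\eval \times \rho}(f(\Omega)) \leq \alpha$: the ``only if'' because any witnessing $H$ forces $\pi_{Z^N}^{\eval \times \rho}(f(\Omega)) \leq \uPi_{Z^N}^{\eval \times \rho}(H) \leq \alpha$, and the ``if'' by taking $H = \{f(\Omega)\}$. Hence the two events coincide and Theorem~\ref{thm:strong.valid} again gives the bound $\alpha$. Crucially, this equivalence imposes no restriction on how the witnessing $H$ is chosen --- it may be selected with full knowledge of $Z^N$ --- which is precisely the adversarial/post-hoc robustness the ``there exists'' form is meant to capture, in the spirit of Corollary~\ref{cor:uniform.vac}.

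The only point requiring any care is measurability bookkeeping: for fixed $H$ one needs ``$f(\Omega) \in H$'' to be an event, i.e.\ $f^{-1}(H)$ to be measurable, and for the ``there exists'' form one needs $(z^n, \omega) \mapsto \pi_{z^n}^{\eval \times \rho}(f(\omega))$ to be jointly measurable; under the standing continuity of $f$ and the same measurability conventions on $\theta \mapsto \eval_\theta$ already invoked for the composite-hypothesis infima in Section~\ref{SS:eprocess}, these hold, and I would flag them only in passing, as elsewhere. Beyond this, I anticipate no obstacle: Corollary~\ref{cor:strong.valid} is a pure monotonicity corollary of Theorem~\ref{thm:strong.valid} (hence ultimately of the regularized Ville inequality, Theorem~\ref{thm:reg.ville}), and in particular its second inequality also recovers, and slightly sharpens, the $\uuprob$-Type~I error bound discussed in Section~\ref{SS:implications}.
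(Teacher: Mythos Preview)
Your proposal is correct and follows essentially the same approach the paper takes: the paper presents Corollary~\ref{cor:strong.valid} as an immediate consequence of Theorem~\ref{thm:strong.valid} via monotonicity, exactly paralleling how Corollaries~\ref{cor:strong.valid.vac} and~\ref{cor:uniform.vac} follow from \eqref{eq:strong.valid.vac} through the equivalence~\eqref{eq:equivalence} and the monotonicity relation~\eqref{eq:monotone}. Your event-containment argument for the second inequality and your event-equivalence argument for the first are precisely the regularized analogues of those earlier proofs, and your measurability caveats are appropriate and in keeping with the paper's conventions.
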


The interpretation is, as before, in terms of reliability.  The second line says that, for any fixed $H$, the upper probability that $H$ is true {\em and} the IM assigns it small upper probability is small.  The first line is stronger, it says that the probability the IM assigns small upper probability to any true hypothesis about $f(\Omega)$ is small.  
To make sense of this, suppose data $z^n$ is observed and, for a relevant hypothesis, it happens that $\uPi_{z^n}^{\eval \times \rho}(H)$ is smaller than a threshold that You deem  to be ``sufficiently small.''  Then Corollary~\ref{cor:strong.valid} offers justification for You to draw the inference that $H$ is false---if it were true, then it would be a ``rare event'' that the IM assigned it such a small upper probability.  

Finally, since
e-processes are likelihood ratios of some sort, the IM output $(\lPi_{z^n}^{\eval \times \rho}, \uPi_{z^n}^{\eval \times \rho})$ depends on the data $z^n$ only through the likelihood function and no other specifics concerning a ``model'' are used for drawing inferences.
This implies that the IM satisfies both the {\em likelihood principle} \citep[e.g.,][]{birnbaum1962, basu1975, bergerwolpert1984} and the strong frequentist-like properties derived above.  This is remarkable because the likelihood principle is Bayesians' turf, but I've shown that both Bayesian-like uncertainty quantification is in reach without sacrificing frequentist-like reliability. 


\subsection{Illustration}
\label{SS:gue}

As a quick illustration, consider inference on the median, $\Theta$, of an underlying distribution; of course, other quantiles can be handled similarly.  Suppose that the observable data are scalars with fully unknown distribution $\prob_\Omega$---that is, the index set $\OO$ is in one-to-one correspondence with the set of distributions on $\RR$.  One way to characterize the mapping from a generic distribution $\prob_\omega$ to the median $\theta$ is through the introduction of a loss function $L_t(z) = |z-t|$, and then defining 
\[ \theta = f(\omega) := \arg\min_t \prob_\omega L_t, \]
the minimizer of the expected absolute difference.  \citet{dey.gue} developed a general e-process construction, extending the universal inference framework of \citet{wasserman.universal} to cases where the quantity of interest is the minimizer of an expected loss.  Their proposed e-process is a sort of ``likelihood ratio,'' where the individual ``likelihood'' terms aren't genuine likelihoods, but suitable quasi-likelihoods constructed from the provided loss function.  Specifically, they define 
\[ \eval_\theta^n = \exp\Bigl[ -\eta \sum_{i=1}^n\{ L_{\hat\theta^{i-1}}(z_i) - L_\theta(z_i)\} \Bigr], \]
where $L_t(z) = |z-t|$ is as above, $\eta > 0$ is a tuning parameter to be discussed further below, and $\hat\theta^k = \hat\theta(z^k) = \text{\tt median}(z_1,\ldots,z_k)$, for $k \geq 1$, with $\hat\theta^0$ any fixed constant.  Then there exists $\eta > 0$ such that $\eval_\theta^n$ is an e-process for the median $\Theta$.  

\citet{dey.gue} describe a data-driven approach for choosing $\eta$ but, for simplicity, I'll take a fixed $\eta$ that depends on a mild assumption about the data-generating process.  By the triangle inequality, the absolute loss function is 1-Lipschitz, i.e., 
\[ \bigl| L_t(z) - L_\theta(z) \bigr| \leq |t - \theta|, \quad \text{all $(z,t,\theta)$}. \]
Then the loss difference $L_t(Z) - L_\theta(Z)$ is a bounded random variable for each pair $(t,\theta)$, hence it's subgaussian and, in particular, it satisfies 
\[ \sup_{\omega: f(\omega) = \theta} \prob_\omega e^{-\eta(L_t - L_\theta)} \leq e^{-\eta \cdot \inf_{\omega: f(\omega)=\theta} \prob_\omega (L_t - L_\theta) + \eta^2 (t - \theta)^2 / 4}. \]
Then the right-hand side is upper-bounded by 1 if and only if
\[ \eta \leq \frac{4 \cdot \inf_{\omega: f(\omega)=\theta} \prob_\omega (L_t - L_\theta)}{(t-\theta)^2}. \]
If $\prob_\omega$ is absolutely continuous, then the expected loss can be approximated locally as
\[ \prob_\omega(L_t - L_\theta) \approx \tfrac12 \, B(\omega) \, (t-\theta)^2, \]
where $B(\omega)$ is the second derivative of $t \mapsto \prob_\omega L_t$ at the median $t=\theta$.  For $L_t(z) = |z-t|$, this second derivative is the density of $\prob_\omega$ evaluated at the median $\theta=f(\omega)$, which is non-negative.  So, if I restrict attention to those (absolutely continuous) $\prob_\omega$'s whose density at $f(\omega)$ is at least $\eps > 0$, then choosing $\eta \leq 2\eps$ will make $\eval_\theta^n$ an e-process.  

For illustration, suppose that Your prior knowledge is an embellished version of the ``95\% sure the median is positive'' belief, i.e., 
\[ q(\theta) = \{0.05 \times 1(\theta < 0) + 1(\theta \geq 0)\} (1 + |\theta|)^{-1}, \quad \theta \in \RR. \]
That is, beyond satisfying $\lprior(\Theta > 0) = 0.95$, posited values of the median further away from 0 are strictly less plausible than values closer to 0; see the gray curve in Figure~\ref{fig:gue}.  Note that only prior knowledge about $\Theta$ is involved here, nothing is said about other features of $\prob_\Omega$.  The other curves in Figure~\ref{fig:gue} represent the unregularized and regularized e-possibility contours---i.e., $\pi_{z^n}^\eval$ and $\pi_{z^n}^{\eval \times \rho}$, respectively---as described above for three different data sets.  Each data set is based on a common sample of size $n=25$ from a Student- distribution with degrees of freedom 2; the true density function at 0 is larger than the threshold $\eps = 0.1$ that I set here for determining $\eta$.  This common data set is shifted so that the sample median is to the left, on top of, and to the right of 0, to represent different degrees of compatibility with prior knowledge.  Here, like in Section~\ref{SS:gains}, the take-away message is that regularization leads to greater efficiency gains when the data shows mild signs of incompatibility with prior knowledge, as in Panel~(a), since those parameter values mildly compatible with the data but at least mildly incompatible with the prior knowledge get downweighted by the e-possibilistic contour. 

\begin{figure}[t]
\begin{center}
\subfigure[$\text{\tt median}(z^n) = -0.5$]{\scalebox{0.6}{\includegraphics{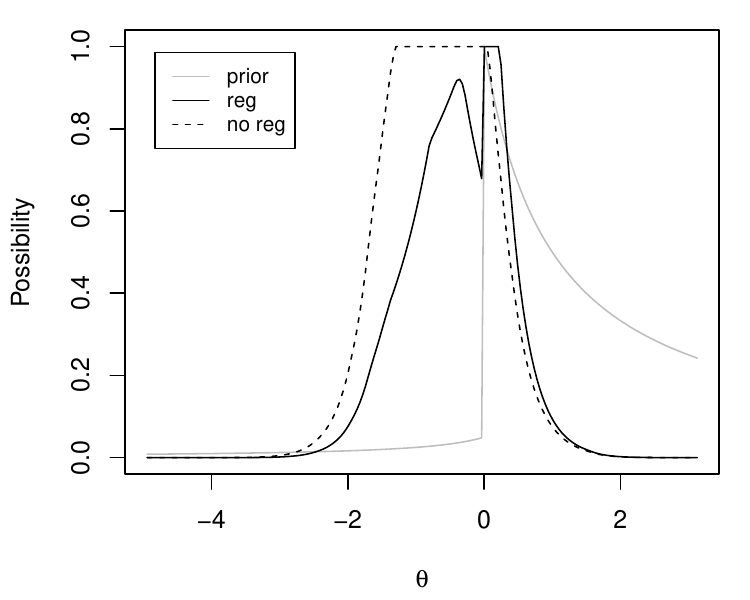}}}
\subfigure[$\text{\tt median}(z^n) = 0$]{\scalebox{0.6}{\includegraphics{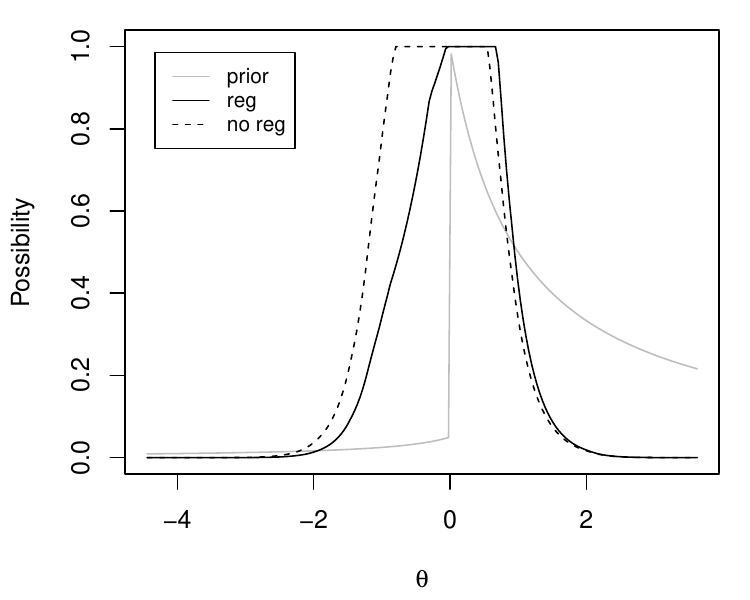}}}
\subfigure[$\text{\tt median}(z^n) = 0.5$]{\scalebox{0.6}{\includegraphics{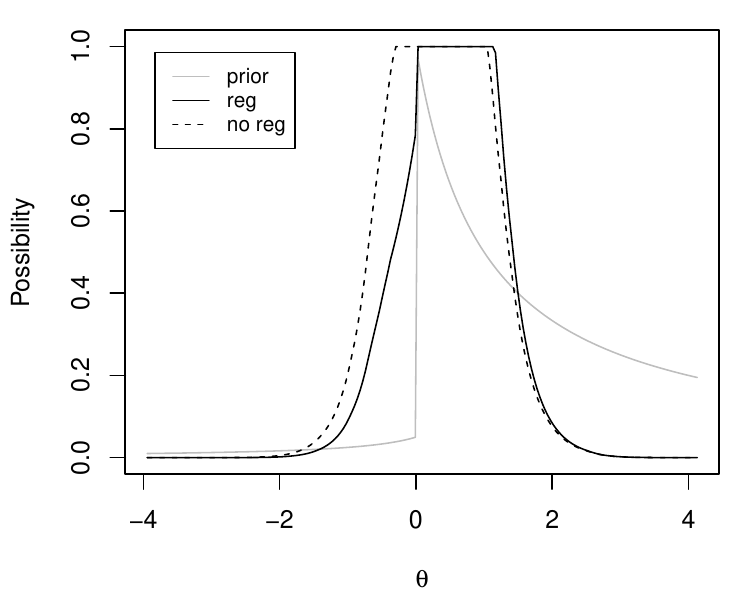}}}
\end{center}
\caption{Plot of the e-possibility contour for three different data sets $z^n$.}
\label{fig:gue}
\end{figure}

\subsection{From uncertainty quantification to behavior}
\label{SS:euq.behavior}

There are (at least) two different behavioral aspects that can be considered.  One pertains to the subjective interpretation of the e-possibilistic IM's output as lower and upper bounds on prices You would assign to gambles.  In that context, a relevant question is if adopting such a pricing scheme put You at risk of being made a sure loser.  The answer is {\em No}---anytime validity implies no sure loss---and these details are in Appendix~\ref{S:euq.coherence}.  

Another behavioral aspect, to be explored in detail here, concerns formal decision-making and the assessment of relevant actions in terms of (lower and upper) expected loss.  For a generic action space $\AA$, let $\ell_a(\theta) \geq 0$ represent the (non-negative) 
loss associated with taking action $a \in \AA$ when the relevant state of the world is $\theta \in \TT$.  Under ideal, perfect-information settings, where the true state of the world $\Theta$ is known, the best decision would correspond to choosing $a^\star$ to minimize the loss $a \mapsto \ell_a(\Theta)$; note that the case of maximizing utility can be recovered from this by defining utility as, say, $u_a = -\ell_a$.  Unfortunately, $\Theta$ is uncertain, so the aforementioned minimization exercise can't be carried out.  But an e-possibilistic IM $(\lPi_{z^n}^{\eval \times \rho}, \uPi_{z^n}^{\eval \times \rho})$ is available for data-driven uncertainty quantification, so I propose to mimic the von Neumann--Morgenstern proposal and evaluate the corresponding upper and lower expected loss:
\[ \uPi_{z^n}^{\eval \times \rho}(\ell_a) = \int_0^1 \Bigl\{ \sup_{\theta: \pi_{z^n}^{\eval \times \rho}(\theta) \geq s} \ell_a(\theta) \Bigr\} \, ds \qquad \text{and} \qquad \lPi_{z^n}^{\eval \times \rho} \, \ell_a = -\uPi_{z^n}^{\eval \times \rho}(-\ell_a). \]
Since the loss is non-negative, the upper and lower expectations surely exist, though they could be $+\infty$ and $-\infty$, respectively, at least for some $a$.  They're both finite at a given $a$ if $\theta \mapsto \ell_a(\theta)$ is previsable with respect to $(\lPi_{z^n}^{\eval \times \rho}, \uPi_{z^n}^{\eval \times \rho})$, e.g., if the loss is bounded.  In any case, these two functionals together determine the IM's assessment of the quality of action $a$.  Since the goal is to make the loss small in some sense, it's reasonable to define the ``best'' action, given data $z^n$, as the minimizer of the upper expected loss:
\begin{equation}
\label{eq:a.hat}
\hat a(z^n) := \arg\min_{a \in \AA} \uPi_{z^n}^{\eval \times \rho}(\ell_a). 
\end{equation}
Further details can be found in \citet{denoeux.decision.2019} and \citet{imdec}.  

For a simple illustration, reconsider the Gaussian example from Section~\ref{SS:gains}, with  
\[ \eval_\theta(z^n) = (nv + 1)^{1/2} \exp\bigl\{ -\tfrac{n}{2} (\theta - \bar z_n)^2 + \tfrac12 ( \tfrac{n}{nv + 1} ) \, \bar z_n^2 \bigr\}, \quad \theta \in \TT, \]
where $v > 0$ is a variance hyperparameter that can take any value; in my numerical examples here and above, I'll take $v=10$.  Starting with vacuous prior information, so that $\rho \equiv 1$, the e-possibilistic IM's contour $\pi_{z^n}^\eval$ is plotted in Figure~\ref{fig:contour.gauss}.  I take the loss function to be squared error: $\ell_a(\theta) = (\theta-a)^2$, also plotted in Figure~\ref{fig:contour.gauss} for several values of $a$.  It's easy to see, based on the symmetry of both the e-process and the loss, that the minimizer of the upper expected loss is $\hat a(z^n) = n^{-1} \sum_{i=1}^n z_i$, the sample mean.  For reasons that will become clear below, it's of interest to evaluate that upper expected loss at the aforementioned minimizer (see Appendix~\ref{A:choquet} for details):
\begin{align*}
\uPi_{z^n}^{\eval} \, \ell_{\hat a(z^n)} & =  \int_0^1 \Bigl\{ \sup_{\theta: \pi_{z^n}^{\eval}(\theta) \geq s} \ell_{\hat a(z^n)}(\theta) \Bigr\} \, ds = n^{-1}\bigl\{ 2 + \log(nv + 1) + (\tfrac{n}{nv+1}) \bar z_n^2 \bigr\}.
\end{align*}

Stick with the same Gaussian illustration, but this time consider regularization corresponding to the partial prior described in Section~\ref{SS:gains}, the strongest of the three partial priors consider there.  Figure~\ref{fig:contour.gauss.reg}(a) shows the possibility contours for the four different values of the partial prior hyperparameter as in Section~\ref{SS:gains}, along with the vacuous prior contour (same as in Figure~\ref{fig:contour.gauss}).  As expected, the regularized contours are a bit more concentrated than the unregularized contour, with more concentration corresponding to the stronger prior knowledge.  From this picture, it's pretty clear that the regularized e-possibilistic IM's risk-minimizing action is closer to 0 than that for the unregularized possibilistic IM.  The introduction of regularization complicates some of the calculations, so I opt for a numerical solution.  Figure~\ref{fig:contour.gauss.reg}(b) shows plots of the e-possibilistic IM's risk assessment as a function of the action $a$, again for different values of the partial prior hyperparameter.  This confirms the intuition that the more the partial prior supports ``$\Theta$ near 0,'' the closer the IM's risk-minimizing rule will be to 0. 

\begin{figure}[t]
\begin{center}
\scalebox{0.6}{\includegraphics{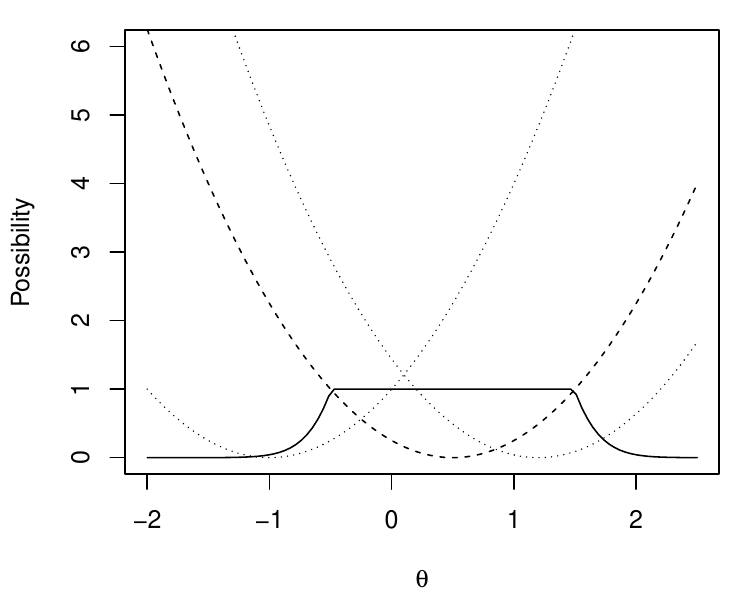}}
\end{center}
\caption{Plot of the (unregularized) e-possibilistic IM's contour (solid), based on data $z^n$ with $n=5$ and $\bar z_n = 0.5$. Plots of the squared error loss function for several different values of $a$ are overlaid (dotted lines), including for $\hat a = \bar z_n$ (dashed line).}
\label{fig:contour.gauss}
\end{figure}

\begin{figure}[t]
\begin{center}
\subfigure[Contour]{\scalebox{0.6}{\includegraphics{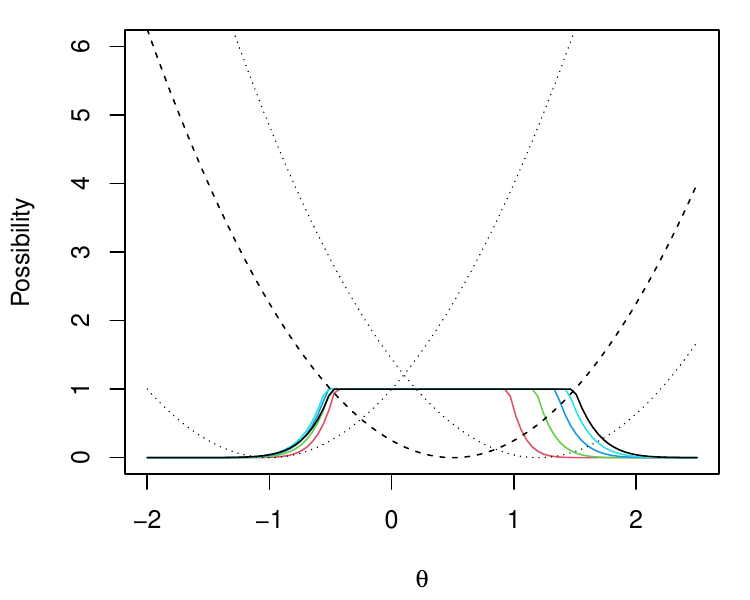}}}
\subfigure[IM's risk assessment]{\scalebox{0.6}{\includegraphics{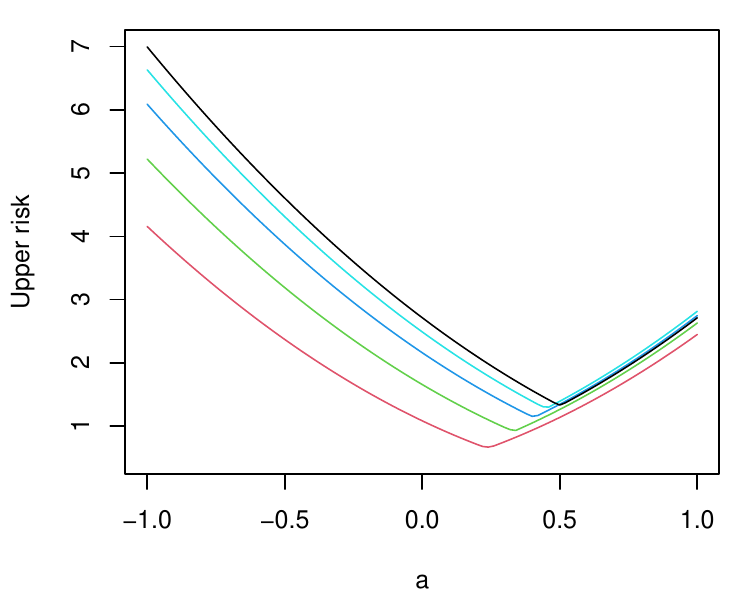}}}
\end{center}
\caption{Panel (a): Plots of the (regularized) e-possibilistic IM's contour, unregularized and regularized, based on data $z^n$ with $n=5$ and $\bar z_n = 0.5$; colors correspond to those in Figure~\ref{fig:gauss}. Plots of the squared error loss function for several different values of $a$ are overlaid (dotted lines), including for $\hat a = \bar z_n$ (dashed line). Panel (b): Plots of the e-possibilistic IM's risk assessment $a \mapsto \uPi_{z^n}^{\eval \times \rho}\,\ell_a$ for five different partial priors---the vacuous prior (black) is minimized at $\bar z_n = 0.5$ while the others shrink toward 0.}
\label{fig:contour.gauss.reg}
\end{figure}

Given the IM's strong reliability properties presented above, it makes sense to ask if these carry over to formal decision-making.  The next result establishes that, indeed, the IM's assessment of an action is expected to not be too optimistic compared to the oracle's. This generalizes a result in \citet{grunwald.epost} for capped e-posteriors.

\begin{thm}
\label{thm:imdec}
Given a loss function $\ell \geq 0$, the the regularized e-possibilistic IM's assessment of actions satisfies 
\begin{equation}
\label{eq:dec.bound}
\uuprob\Bigl\{ \sup_{a \in \AA} \frac{\ell_a(f(\Omega))}{\uPi_{Z^N}^{\eval \times \rho}(\ell_a)} \Bigr\} \leq 1 \quad \text{for all $N$}. 
\end{equation}
In the special case of vacuous partial prior information, \eqref{eq:dec.bound} specializes to 
\[ \sup_{\omega \in \OO} \E_\omega \Bigl\{ \sup_{a \in \AA} \frac{\ell_a(f(\omega))}{\uPi_{Z^N}^{\eval}(\ell_a)} \Bigr\} \leq 1 \quad \text{for all $N$}. \]
\end{thm}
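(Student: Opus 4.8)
The plan is to derive \eqref{eq:dec.bound} from the first half of Theorem~\ref{thm:reg.ville}, i.e., from $\uuprob\bigl[\eval^{\text{reg}}\{Z^N,f(\Omega)\}\bigr]\le 1$, by showing that the inner supremum in \eqref{eq:dec.bound} is dominated, pointwise in $(Z^N,\Omega)$, by $\eval^{\text{reg}}\{Z^N,f(\Omega)\}$. The one ingredient that is actually needed is the deterministic lemma that, for every $z^n$, every $a\in\AA$, and every $\theta\in\TT$,
\[
\uPi_{z^n}^{\eval\times\rho}(\ell_a)\ \ge\ \ell_a(\theta)\,\eval^{\text{reg}}(z^n,\theta)^{-1},
\]
with the usual conventions $0\cdot\infty=0$ and $c/0=\infty$ for $c>0$. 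Granting this, rearrange to get $\ell_a(\theta)/\uPi_{z^n}^{\eval\times\rho}(\ell_a)\le \eval^{\text{reg}}(z^n,\theta)$ for each $a$; since the right-hand side does not depend on $a$, taking the supremum over $\AA$ and then setting $\theta=f(\Omega)$, $z^n=Z^N$ gives the pointwise bound $\sup_{a\in\AA}\ell_a(f(\Omega))/\uPi_{Z^N}^{\eval\times\rho}(\ell_a)\le \eval^{\text{reg}}\{Z^N,f(\Omega)\}$, and applying the monotone functional $\uuprob$ together with \eqref{eq:reg.ville0} finishes the general case.

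To prove the lemma I would unwind the Choquet-type representation of the upper expected loss, exactly as in \eqref{eq:upper.e.poss}. Since $\ell_a\ge 0$, the superlevel set appearing in the integrand is $\{\theta':\eval^{\text{reg}}(z^n,\theta')^{-1}\ge s\}=\{\theta':\eval^{\text{reg}}(z^n,\theta')\le s^{-1}\}$, which contains the fixed point $\theta$ for every $s\le \eval^{\text{reg}}(z^n,\theta)^{-1}$; hence the integrand $\sup\{\ell_a(\theta'):\eval^{\text{reg}}(z^n,\theta')^{-1}\ge s\}$ is at least $\ell_a(\theta)$ throughout that range of $s$, and integrating (or, after the substitution $t=s^{-1}$, integrating $t^{-2}G(t)$ with $G(t)=\sup\{\ell_a(\theta'):\eval^{\text{reg}}(z^n,\theta')\le t\}$) produces the factor $\eval^{\text{reg}}(z^n,\theta)^{-1}$ and delivers the lemma. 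The degenerate cases $\ell_a(\theta)=0$, $\eval^{\text{reg}}(z^n,\theta)=0$, and $\uPi_{z^n}^{\eval\times\rho}(\ell_a)=+\infty$ are all immediate, and non-measurability of the inner supremum is dispatched by the same convention already in force for \eqref{eq:eval.bound}. For the vacuous-prior specialization, $\rho\equiv 1$ collapses $\eval^{\text{reg}}(\cdot,\theta)$ to $\eval_\theta(\cdot)$ and $\pi^{\eval\times\rho}$ to $\pi^{\eval}$, while $\uuprob$ reduces to $\sup_{\omega\in\OO}\E_\omega(\cdot)$ as in Remark~\ref{re:vac.supremum} and \eqref{eq:vac.supremum}; the displayed bound then reads $\sup_{\omega}\E_\omega\bigl[\sup_a \ell_a(f(\omega))/\uPi_{Z^N}^{\eval}(\ell_a)\bigr]\le \sup_\omega \E_\omega[\eval_{f(\omega)}(Z^N)]\le 1$, the last step being \eqref{eq:eval.bound}.

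The main obstacle is the lemma, and within it the point that the bound must carry the full reciprocal $\eval^{\text{reg}}(z^n,\theta)^{-1}$ — the quantity actually controlled by the regularized Ville inequality — and not merely the capped value $1\wedge\eval^{\text{reg}}(z^n,\theta)^{-1}$ read directly off the contour \eqref{eq:contour.reg}. One has to verify that the integral defining $\uPi_{z^n}^{\eval\times\rho}(\ell_a)$ — equivalently, the growth of the contour's superlevel sets as $s\downarrow 0$ — reaches far enough to accumulate the entire $\eval^{\text{reg}}(z^n,\theta)^{-1}$ worth of ``mass'' at $\theta$, including when $\eval^{\text{reg}}(z^n,\theta)<1$; this amounts to reading the upper expected loss against the uncapped reciprocal of the e-process, as in Gr\"unwald's e-posterior. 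Everything downstream — the supremum over $\AA$, the monotonicity and supremum-over-credal-set structure of $\uuprob$ in \eqref{eq:uuprob}, and the appeal to Theorem~\ref{thm:reg.ville} — is then routine.
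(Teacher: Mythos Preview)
Your overall strategy---establish the deterministic, pointwise inequality
\[
\sup_{a\in\AA}\frac{\ell_a(f(\Omega))}{\uPi_{Z^N}^{\eval\times\rho}(\ell_a)}\;\le\;\eval^{\text{reg}}\{Z^N,f(\Omega)\},
\]
then apply monotonicity of $\uuprob$ and \eqref{eq:reg.ville0}---is exactly the route the paper takes. The paper obtains the pointwise bound by a two-case split on whether $\eval^{\text{reg}}(Z^N,\Theta)$ exceeds $1$, whereas you package it as a single lemma $\uPi_{z^n}^{\eval\times\rho}(\ell_a)\ge\ell_a(\theta)\,\eval^{\text{reg}}(z^n,\theta)^{-1}$; the downstream steps (sup over $a$, then $\uuprob$, then the vacuous specialization via Remark~\ref{re:vac.supremum}) are identical.

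The ``main obstacle'' you flag is real, and your lemma as stated is \emph{false} when $\eval^{\text{reg}}(z^n,\theta)<1$. The Choquet integral in \eqref{eq:upper.e.poss} runs only over $s\in[0,1]$, so your argument ``the integrand is at least $\ell_a(\theta)$ for all $s\le\eval^{\text{reg}}(z^n,\theta)^{-1}$'' delivers only
\[
\uPi_{z^n}^{\eval\times\rho}(\ell_a)\;\ge\;\ell_a(\theta)\cdot\bigl(1\wedge\eval^{\text{reg}}(z^n,\theta)^{-1}\bigr)\;=\;\ell_a(\theta)\,\pi_{z^n}^{\eval\times\rho}(\theta),
\]
i.e., exactly the capped value you were worried about, not the full reciprocal. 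Concretely: if $\TT=\{0,1\}$, $\ell_a(0)=0$, $\ell_a(1)=1$, and $\eval^{\text{reg}}(z^n,\cdot)\equiv\tfrac12$, then $\pi\equiv1$, $\uPi_{z^n}^{\eval\times\rho}(\ell_a)=1$, but $\ell_a(1)\,\eval^{\text{reg}}(z^n,1)^{-1}=2$. There is no way for the $[0,1]$-integral to ``accumulate the entire $\eval^{\text{reg}}{}^{-1}$ worth of mass'' once that reciprocal exceeds $1$.

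The paper's two-case split is precisely the device meant to handle this: when $\eval^{\text{reg}}(Z^N,\Theta)>1$ the cap is inactive, $\pi(\Theta)=\eval^{\text{reg}}{}^{-1}$, and truncating the integral at that level gives ratio $\le\eval^{\text{reg}}$; when $\eval^{\text{reg}}(Z^N,\Theta)\le1$ one has $\pi(\Theta)=1$, so $\Theta$ sits in every superlevel set, $\uPi(\ell_a)\ge\ell_a(\Theta)$, and ratio $\le1$. (The case labels and a couple of intermediate identities in the paper's printed proof are evidently transposed, but this is the intended content.) Note, however, that the second branch yields only ratio $\le1$, and since $\eval^{\text{reg}}\le1$ there, the combined conclusion is ratio $\le 1\vee\eval^{\text{reg}}$ rather than ratio $\le\eval^{\text{reg}}$; so the obstacle you identified is not fully discharged by the case split either, and the step from the pointwise bound to \eqref{eq:dec.bound} via \eqref{eq:reg.ville0} remains delicate.
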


My first remark concerns the interpretation of Theorem~\ref{thm:imdec}: there is no (possibly data-dependent) action $a$ such that the e-possibilistic IM's assessment of $a$ doesn't tend to be overly optimistic (i.e., significantly smaller) than that of an oracle who knows the true state of the world $\Omega$.  To understand this, consider the oracle and IM assessments: $a \mapsto \ell_a(\Theta)$ and $a \mapsto \uPi_{Z^N}^{\eval \times \rho}(\ell_a)$. Imagine a plot of these two functions.  The former, the oracle assessment, will take small values for $a$ near the unattainable ``best'' action $a^\star = \arg\min_a \ell_a(\Theta)$.  Since the data's informativeness is limited, it's unrealistic to expect that the IM's assessment would tend to be small near $a^\star$ too.  Consequently, if the IM's assessment of some $a$ is much more optimistic than the oracle's, then it's likely that this $a$ is different from $a^\star$.  And if the IM's assessment is favoring actions away from $a^\star$, then there'd be a risk of suffering a large loss by taking actions suggested by the IM, e.g., $\hat a$ in \eqref{eq:a.hat}.  Theorem~\ref{thm:imdec} excludes this possibility, so the IM-based assessment is reliable.  

Secondly,the uniform bound on an expectation can be turned into an uniform probability bound.  In particular, 
\begin{equation}
\label{eq:dec.ville}
\uuprob\Bigl\{ \sup_{a \in \AA} \frac{\ell_a(f(\Omega))}{\uPi_{Z^N}^{\eval \times \rho}(\ell_a)} \geq \alpha^{-1} \Bigr\} \leq \alpha \quad \text{for all $\alpha \in [0,1]$, all $N$}. 
\end{equation}
The above says that existence of a triple $(Z^N, \Theta, a)$ such that the IM's risk assessment is drastically smaller than the oracle's is a $\uuprob$-rare event, hence reliability.  

Third, the supremum on the inside of the $\uuprob$-expectation implies that the fixed $a$ can be replaced by any data-dependent $a$, i.e., $a(Z^N)$, leading to 
\[ \uuprob\Bigl\{ \frac{\ell_{a(Z^N)}(f(\Omega))}{\uPi_{Z^N}^{\eval \times \rho}(\ell_{a(Z^N)})} \Bigr\} \leq 1 \quad \text{for all $N$}. \]
In particular, the above holds with IM-based rule $\hat a(Z^N)$ in \eqref{eq:a.hat}.  With this focus on a single, but data-dependent action, say, $\hat a(Z^N)$, there's a bit more that can be said concerning the probability bound \eqref{eq:dec.ville}.  Indeed, \eqref{eq:dec.ville} can be unwrapped as 
\[ \uuprob\Bigl\{ \ell_{\hat a(Z^N)}(f(\Omega)) \geq \alpha^{-1} \uPi_{Z^N}^{\eval \times \rho}\,\ell_{\hat a(Z^N)} \Bigr\} \leq \alpha. \]
That is, it's a $\uuprob$-rare event that the realized loss $\ell_{\hat a(Z^N)}(\Theta)$---corresponding to the uncertain $(Z^N,\Theta)$---incurred by taking the IM's suggested action $\hat a(Z^N)$ is a large multiple of the IM's internal, data-dependent assessment of the risk.

\section{Application}
\label{S:ware}

Statistical, scientific, and ethical challenges emerge when comparing two medical treatments when one is potentially far superior to the other.  The goal is to demonstrably prove that the one treatment is superior, so that the inferior treatment can be safely removed from use, thereby saving/improving lives.  On the one hand, if one treatment is far superior, then, this should be easy to prove statistically with sufficient data.  On the other hand, if one treatment is far superior, then it's borderline unethical
to continue randomizing patients to the inferior treatment just to get ``sufficient data.'' 
This context clearly highlights the importance of (a)~accommodating adaptive data-collection schemes and (b)~using reliable and efficient statistical methods from which justifiable conclusions can be drawn while minimizing patients' exposure to an inferior treatment.  

One well-known study is presented in \citet{ware1989}.  His investigation concerned persistent pulmonary hypertension in newborns.  The standard treatment for many years, called the conventional medical therapy (CMT), had a mortality rate of at least 80\%.  But by 1985, a new potential treatment had emerged, namely, extracorporeal membrane oxygenation (ECMO), for which reported survival rates were at least 80\%.  Unfortunately, the empirical support for this latter claim was rather thin: only one randomized trial had been performed and, in that trial, only one patient was assigned to CMT.  Despite ECMO's strong performance in these studies, the combination of its statistically-limited support and its potential for side-effects gave some medical researchers pause.  Ware's paper describes a trial that's based on randomization (within blocks) until a prespecified number of deaths---namely, 4---are observed in {\em each group}.  Ware's data is as follows:
\begin{equation}
\label{eq:ware.data}
\text{CMT: 10 patients with 4 deaths} \qquad \text{ECMO: 9 patients with 0 deaths}. 
\end{equation}
It's important to note that the investigators {\em didn't carry out the design as planned}, i.e., they didn't continue following patients until a fourth death in the ECMO group was observed.  This emphasizes the need for anytime valid statistical procedures. 

Assuming independence, and that the uncertain survival probabilities---denoted by $\Theta_\text{cmt}$ and $\Theta_\text{ecmo}$---are constant across patients, the likelihood function based on \eqref{eq:ware.data} is 
\[ L_{z^n}(\theta) \propto \theta_\text{cmt}^6 \, (1 - \theta_\text{cmt})^4 \, \theta_\text{ecmo}^9, \qquad \theta=(\theta_\text{cmt}, \theta_\text{ecmo}), \]
where ``$z^n$'' is just the symbol I'll use for the data in \eqref{eq:ware.data}.  For the e-process, I'll use a slightly modified version of the recommendation in \citet{turner.grunwald.2023},
\[ \eval_\theta(z^n) = \frac{\hat\theta_{\text{cmt},\beta}^6 \, (1 - \hat\theta_{\text{cmt},\beta})^4 \, \hat\theta_{\text{ecmo},\beta}^9}{\theta_\text{cmt}^6 \, (1 - \theta_\text{cmt})^4 \, \theta_\text{ecmo}^9}, \qquad \theta = (\theta_\text{cmt}, \theta_\text{ecmo}), \]
where 
\[ \hat\theta_{\text{cmt},\beta} = \frac{6 + \beta}{10 + 2\beta} \quad \text{and} \quad \hat\theta_{\text{ecmo},\beta} = \frac{9 + \beta}{9 + 2\beta}, \]
with $\beta=0.18$ as suggested in \citet[][Sec.~3]{turner.grunwald.2023}. The results of my analysis based on this e-process will be presented below; see, e.g., Figure~\ref{fig:ware.2d}(a). 

The reader can glean from the above discussion that, while the data in this particular study might be rather limited, there is relevant ``prior information'' available.  But what to do with it?  The frequentist analysis in \citet{ware1989} formally ignores prior information; informally, however, the prior information is used in ad hoc ways.  Bayesian solutions face the problem that prior information is insufficient to determine a precise prior distribution for $\Theta$.  A common strategy in such cases is a prior sensitivity analysis based on a few selected priors that are ``consistent'' with the information available \citep{kass.greenhouse.1989, kass1992}.  Clearly, none of these priors are ``right,'' so the best one can hope for is that the sensitivity analysis reveals that the posterior isn't sensitive to the prior.  
\citet[][Sec.~5]{walley1996} offers a generalized Bayesian analysis that also virtually ignores the prior information; but instead of literally ignoring it, he models the near-ignorance by a prior credal set that contains all independent beta distributions for $(\Theta_\text{cmt}, \Theta_\text{ecmo})$.  In cases like the present CMT/ECMO study, where data is limited, my goal is to leverage, rather than ignore, real-but-necessarily-incomplete prior information in order to {\em strengthen} the analysis and justification for its conclusions.  

Next, I'll present my encoding of the available partial prior information---with minimal embellishment on the information given in \citet{ware1989}---as a possibility distribution to be used in my subsequent analysis.  To be clear, I have no medical expertise; so, my analysis is sure to be overly simplistic and, hence, is only for illustrative purposes.  That said, I think my formulation and conclusions drawn are quite reasonable.  

There are two statements---``$\leq 0.2$'' and ``$\geq 0.8$''---that stand out in Ware's report, which are best interpreted as ``prior limits'' for $\Theta_\text{cmt}$ and $\Theta_\text{ecmo}$, respectively.  What's missing are quantitative statements about the confidence or degree of belief in these limits, but qualitative statements are made in the text.  This is where some subjective judgment becomes necessary.  As Ware explains, there's reasonably strong support for the prior limit ``$\Theta_\text{ecmo} \geq 0.8$'' based on historical data.  Importantly, Ware argues that the study inclusion protocol, etc.~are such that the patients in these previous studies and those in his study are more-or-less homogeneous.  The support for the prior limit ``$\Theta_\text{cmt} \leq 0.2$,'' on the other hand, is considerably weaker: there was only one case involving a patient that could've received ECMO but was randomly assigned to CMT instead.
Based on my interpretation of Ware's report, I assign confidence as follows:
\begin{itemize}
\item 90\% confident in ``$\Theta_\text{ecmo} \geq 0.8$'' and
\vspace{-2mm}
\item 50\% confident in ``$\Theta_\text{cmt} \leq 0.3$.'' 
\end{itemize}
Aside from the difference in confidence levels, which I'll explain shortly, notice that I stretch out the limit for $\Theta_\text{cmt}$ a bit; this is because Ware used ``0.2'' as a sort of prior mode for $\Theta_\text{cmt}$, so it might be prudent to have the mode closer to the middle of the range to which I assign some non-trivial degree of confidence.  The confidence levels above are based on my judgment that researchers are quite confident in the performance of ECMO but far less---say, roughly {\em half} as---confident about CMT on similar populations of patients.  Mathematically, I opt to encode this vaguely-stated quantification of uncertainty as a possibility measure for $\Theta=(\Theta_\text{cmt}, \Theta_\text{ecmo})$.  Marginally, the two contours are
\begin{align*}
q_\text{ecmo}(\theta_\text{ecmo}) & = 0.1 + 0.9 \cdot 1(\theta_\text{ecmo} \geq 0.8) \\
q_\text{cmt}(\theta_\text{cmt}) & = 0.5 + 0.5 \cdot 1(\theta_\text{cmt} \leq 0.3).
\end{align*}
The aforementioned ``confidence levels'' correspond to the properties 
\[ \uprior_\text{cmt}(\Theta_\text{cmt} \leq 0.3) = 0.5 \quad \text{and} \quad \lprior_\text{ecmo}(\Theta_\text{ecmo} \geq 0.8) = 0.9. \]
Following Walley and others, I'll treat $\Theta_\text{cmt}$ and $\Theta_\text{ecmo}$ as independent {\em a priori} and then take the joint possibility measure $\uprior$ for $\Theta=(\Theta_\text{cmt}, \Theta_\text{ecmo})$ to have contour $q$ equal to the product of $q_\text{ecmo}$ and $q_\text{cmt}$ above. This $q$ is a piecewise constant function on the unit square, taking value 1 on the rectangle $[0,0.3] \times [0.8, 1]$ and smaller values in the three other rectangles.  While others might disagree to some extent with the particular confidence levels that I chose, I believe that this possibilistic prior is consistent with the information Ware presented.  In particular, the credal set determined by $\uprior$ contains independent products of certain beta distributions, among others.  

Plots of the unregularized and regularized e-processes are shown in Figure~\ref{fig:ware.2d}; for the regularized e-process, I'm using the same calibrator $\gamma$ as described in Section~\ref{SS:possibilistic.case}.  As expected, the e-process contour bottoms out at the value corresponding to the simple sample proportions, namely, $(0.6, 1)$.  
Note that the unregularized e-process contours are smooth, whereas those of the regularized version's are rough in some places; this roughness is due to the relatively large jump discontinuity in the prior contour there.  The heavy red line marks the confidence sets $C_\alpha$ and $C_\alpha^\text{reg}$ for $\alpha=0.05$, and there are two notable observations.  First, as it pertains to $\Theta_\text{ecmo}$, for which prior information is relatively strong, the limits are much tighter in the regularized case compared to the unregularized.  Second, for $\Theta_\text{cmt}$, the limits are a bit looser for the regularized case compared to the unregularized.  The latter point might seem disappointing, but this is exactly what should happen: prior knowledge that's somewhat incompatible with data ought to result in more conservative inference.  

\begin{figure}[t]
\begin{center}
\subfigure[Unregularized]{\scalebox{0.55}{\includegraphics{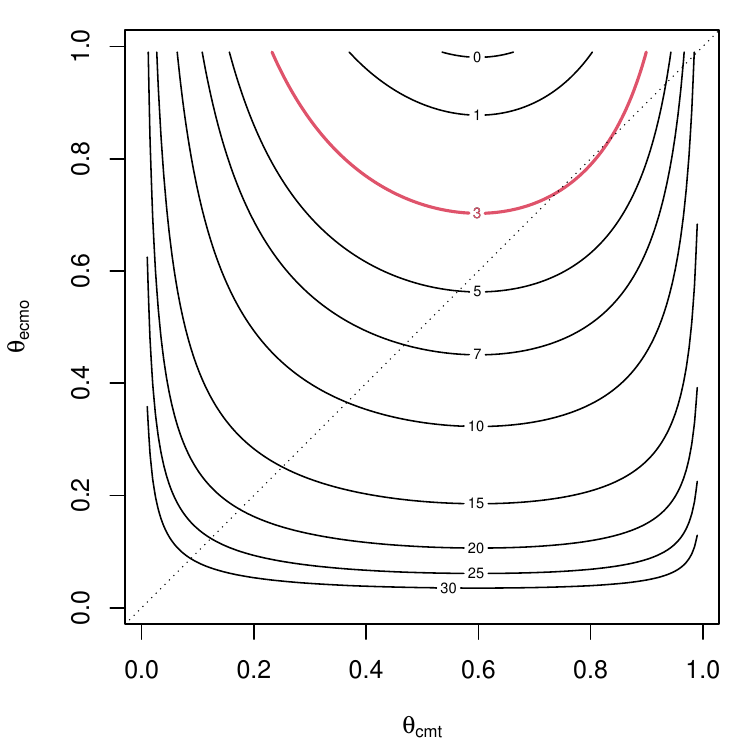}}}
\subfigure[Regularized]{\scalebox{0.55}{\includegraphics{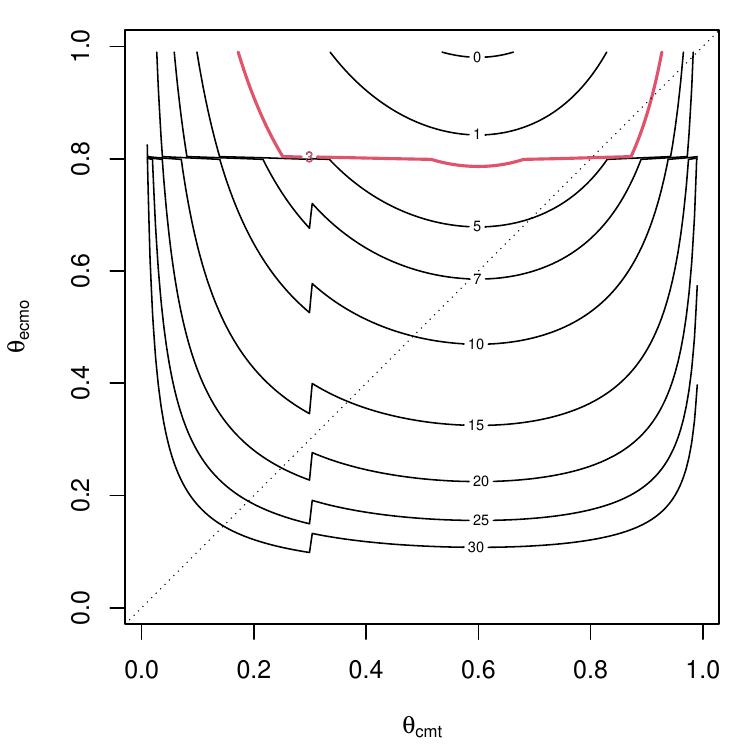}}}
\end{center}
\caption{Plots of the unregularized and regularized e-processes based on Ware's CMT/ECMO data.  Heavy red lines mark the corresponding 95\% confidence sets.}
\label{fig:ware.2d}
\end{figure}

A relevant question is if EMCO is more effective than CMT.  One way to answer this is to test the hypothesis ``$\Theta_\text{ecmo} \leq \Theta_\text{cmt}$'' versus ``$\Theta_\text{ecmo} > \Theta_\text{cmt}$.'' The diagonal line through the two plots in Figure~\ref{fig:ware.2d} represents the boundary between these two propositions and, since the red contour curves intersect with the diagonal line, the corresponding e-process-based tests cannot reject the hypothesis ``$\Theta_\text{ecmo} \leq \Theta_\text{cmt}$'' at level $\alpha=0.05$.
For comparison, Ware presents an analysis that using Fisher's exact test, leading to a p-value of 0.054, which is consistent with my conclusions based on Figure~\ref{fig:ware.2d}(a).  My results are based on procedures proved to be anytime valid, so this provides additional comfort given that Fisher's exact test is not anytime valid.

For uncertainty quantification, the possibility contours, $\pi^\eval$ and $\pi^{\eval \times \rho}$, in this case look identical to those in Figure~\ref{fig:ware.2d}, just with different numerical labels on the contours.  So, the confidence regions and test conclusions derived from $\pi^\eval$ and $\pi^{\eval \times \rho}$ give exactly the same results as those obtained from Figure~\ref{fig:ware.2d}.  But there's more that can be done with the IM's possibilistic uncertainty quantification.  First, consider a feature $\Delta = \Theta_\text{ecmo} - \Theta_\text{cmt}$, the difference in survival rates between EMCO and CMT.  The possibility-theoretic {\em extension principle} 
determines a (marginal) possibility contour 
for $\Delta$ from that for $\Theta$:
\[ \phi_{z^n}^\eval(\delta) = \sup_{\theta \in [0,1]^2: \, \theta_\text{ecmo}-\theta_\text{cmt}=\delta} \pi_{z^n}^\eval(\theta), \quad \delta \in [-1,1]. \]
The regularized e-possibilistic marginal IM contour $\phi_{z^n}^{\eval \times \rho}$ is defined analogously. The corresponding upper probabilities $\uPhi_{z^n}^\eval$ and $\uPhi_{z^n}^{\eval \times \rho}$ are defined via optimization as usual.  The dashed line at $\alpha=0.05$ determines a marginal anytime valid confidence interval for $\Delta$ and, at least for the regularized version (in red), this more-or-less agrees with the generalized Bayes 95\% credible interval presented in \citet[][Fig.~1]{walley1996}.  The message from this plot is that, while ``$\Delta > 0$'' is highly possible, nearly the entire range $[-1,1]$ is ``sufficiently possible'' based on Ware's data.  Admittedly, the extension principle is conservative, but it's the most direct way to marginalize while preserving validity.

\begin{figure}[t]
\begin{center}
\scalebox{0.65}{\includegraphics{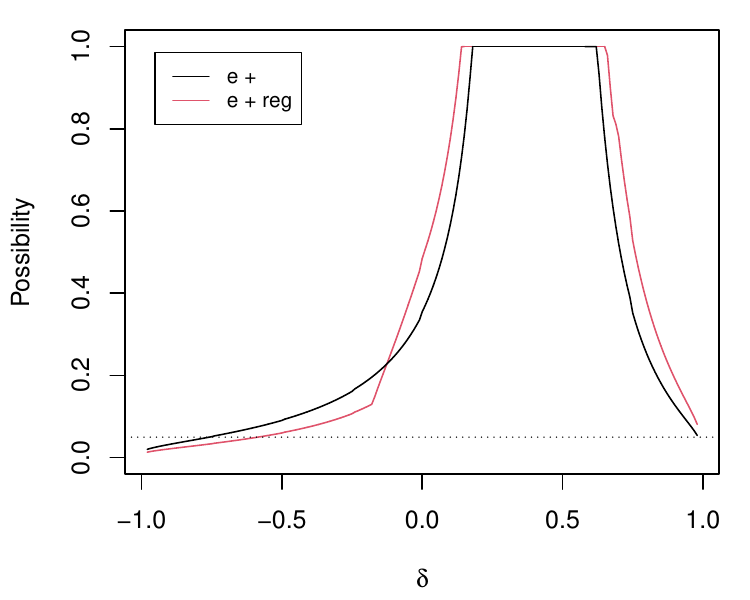}}
\end{center}
\caption{Plots of the regularized and unregularized e-possibilistic IM's marginal contour for $\Delta = \Theta_\text{ecmo} - \Theta_\text{cmt}$ based on Ware's CMT/ECMO data.}
\label{fig:ware.1d}
\end{figure}

A formal decision-theoretic approach can be considered, using suitable lower and upper expected loss/utility based on the e-possibilistic IM output.  \citet{walley1996} argues that of utmost importance---or {\em utility}---is a patient's survival.  That is, the desired goal is for the patient to survive with the treatment they were given, all other factors are irrelevant.  So, if $Y$ is a binary indicator that the patient in question survives, then the utilities associated with CMT and ECMO are 
\[ u_\text{cmt}(y) = u_\text{ecmo}(y) = y, \quad y \in \{0,1\} \equiv \{\text{dies}, \text{survives}\}. \]
In this case, the expected utilities are $\Theta_\text{cmt}$ and $\Theta_\text{ecmo}$.  Since $(\Theta_\text{cmt}, \Theta_\text{ecmo})$ is uncertain, I can't simply pick which of CMT and ECMO has higher expected utility.  But I can evaluate the lower and upper expectation of the difference between the two expected utilities, namely, $\Theta_\text{ecmo} - \Theta_\text{cmt}$, based on the full uncertainty quantification provided by the e-possibilistic IM.  I've already obtained the marginal IM for the difference $\Delta$ in expected utilities, so I just need to evaluate the lower and upper expectations.  For the unregularized e-possibilistic IM, the upper and lower expectation of $\Delta$ are 
\begin{align*}
\uPhi_{z^n}^\eval(\Delta) = \int_0^1 \sup\{ \delta: \phi_{z^n}^\eval(\delta) > s\} \, ds \quad \text{and} \quad \lPhi_{z^n}^\eval(\Delta) = -\uPhi_{z^n}^\eval(-\Delta), 
\end{align*}
and analogously for the regularized version.  Numerically, I get:
\[ \bigl[ \lPhi_{z^n}^\eval(\Delta), \uPhi_{z^n}^\eval(\Delta) \bigr] = [-0.042, 0.739] \quad \text{and} \quad \bigl[ \lPhi_{z^n}^{\eval \times \rho}(\Delta), \uPhi_{z^n}^{\eval \times \rho}(\Delta) \bigr] = [0.086, 0.820]. \]
That the first, unregularized interval contains 0 means that I can't rule out, based on Ware's data, that CMT is preferred to ECMO in terms of utility.  The regularized interval, however, is strictly to the right of the origin, which implies that ECMO is the preferred treatment in terms of expected utility, albeit just barely.  For comparison, Walley reports his version the ``lower and upper expectation of $\Delta$'' as $[0.152, 0.5]$, so he too concludes that ECMO is the demonstrably better treatment.  

\section{Conclusion}
\label{S:discuss}

This paper develops the new concept of and theory associated with {\em regularized e-processes}, from which I further develop reliable---i.e., anytime valid and efficient---inference and uncertainty quantification.  On the technical side, the regularized e-process involves the combination of a fully data-driven e-process and prior knowledge about $\Theta$ that You, the investigator, might have.  Importantly, Your prior information need not be---and typically won't be---sufficiently complete to pinpoint a single prior as a Bayesian analysis would require, so my proposal explicitly draws on aspects of imprecise probability theory.  My proposed regularization discounts those values of $\Theta$ that are incompatible with the available prior knowledge, making it easier to ``reject'' such values compared to the purely data-driven e-process, hence the efficiency gains.  The critical point, however, is that any such non-trivial discounting like described above jeopardizes the original e-process's inherent anytime reliability; therefore, the proposed regularization requires care.  

By encoding the partial prior information rigorously as a coherent imprecise probability, I can build a collection of joint distributions for $(\text{data}, \Theta)$ compatible with the assumed model and available prior information.  Then I generalize the now-familiar anytime validity property in a sound way, accounting for the available prior information, and similarly generalize Ville's inequality to prove that the proposed regularized e-process remains anytime valid in this more general sense.  This reformulation offers mathematical justification for You to accept the efficiency gains offered by regularization.

A number of interesting and challenging questions remain open, I'll end here with a few remarks about two of them.  First, to put the theory presented here into practice, You must convert what You know about the uncertain $\Theta$ into an imprecise probability.  For example, in the clinical trial data illustration of Section~\ref{S:ware}, some degree of belief attached to bounds on $\Theta$ was converted into a possibility contour.  The point is that You need to know (a)~how to tease relevant quantitative details out of Your mostly qualitative corpus of knowledge and (b)~how to properly map those relevant details to a suitable imprecise probability.  On top of this, the second point, is that a possibilistic formulation of the prior information requires choice of a calibrator as I described in Section~\ref{SS:possibilistic.case}.  The elicitation and encoding of partial prior information is an interesting question, more psychological than statistical, something that I plan to explore.  A thorough comparison of how different calibrators perform in this context across different model/data settings is also needed. 


Second, my illustrations only considered cases involving $\Theta$ of fixed dimension.  For applications to high-dimensional problems, it's important to recognize that an efficiency gain can be realized only if the regularization adapts to the dimension of $\Theta$.  This is typically accommodated by tying the dimension and sample size together and then allowing the penalty/prior to depend explicitly on the sample size.  This is a little awkward in the present context, for several reasons.  One is that the data are streaming, and the stopping rule is dynamic, so if dimension and sample size are linked, then it's as if the inference problem doesn't have a well-defined target.  Another challenge is dealing with the fact that $\credal$ is assumed to be a {\em real} representation of Your prior knowledge and it might seem strange that Your knowledge depends on dimension.  The intuition, however, behind standard penalties/priors---e.g., ``most of the entries in $\Theta$ are near-zero''---is inherently imprecise and notions like ``most'' are relative to the number of things in question, which in this case is the dimension of $\Theta$.  This seems doable, but the question remains: specifically how should $\credal$ adapt to dimension?

\section*{Acknowledgments}

This work is partially supported by the U.S.~National Science Foundation, grants SES--2051225 and DMS--2412628.



\appendix

\section{Technical remarks}
\label{A:remarks}

Below are several remarks offering further insights on and explanation of some specific points made in the main text. 

\begin{remark}
\label{re:continuity}
If one encounters a rare case in which the mapping from $\Omega$ to $\Theta=f(\Omega)$ is discontinuous, then there's a simple work-around.  Just carry out the developments below as if $\Omega$ is the quantity of interest, and then marginalize down to $\Theta$ at the end, e.g., by stating the relevant hypotheses about $\Theta$ in terms of the corresponding hypotheses about $\Omega$.  All of the theory presented in the paper supports such an approach. The only downside is that carrying the higher-dimensionality of $\Omega$ from beginning to (near) end can cause efficiency to be lost compared to dropping the dimension to that of $\Theta$ at the outset. 
\end{remark}

\begin{remark}
\label{re:betting}
Following de Finetti, it is common to interpret lower and upper probabilities $(\lprior,\uprior)$ in terms of buying/selling prices for gambles:
\begin{equation}
\label{eq:prices}
\begin{split}
\lprior(H) & = \text{Your supremum buying price for $\$1(\Theta \in H)$} \\
\uprior(H) & = \text{Your infimum selling price for $\$1(\Theta \in H)$}.
\end{split}
\end{equation}
That is, You'd be willing to buy a ticket that pays \$1 if ``$\Theta \in H$'' from me for no more than $\$\lprior(H)$ and, similarly, You'd be willing to sell a ticket to me that pays \$1 if ``$\Theta \in H$'' to me for no less than $\$\uprior(H)$.  Intuitively, You shouldn't be willing to buy a gamble for a price higher than You're willing to sell it for---that would put You at risk of a sure loss---so the setup would be unsatisfactory if it didn't rule out this case.  Indeed, the assumed structure of the credal set implies that $\lprior(H) \leq \uprior(H)$ for all $H$.  Moreover, $\uprior$ is not just a summary or derivative of $\credal$, the two are in one-to-one correspondence:
\begin{equation}
\label{eq:credal}
\credal = \{\prior: \prior(H) \leq \uprior(H) \text{ for all measurable $H$} \}. 
\end{equation}
That is, $\uprior$ is exactly the upper envelope of $\credal$ and, consequently, credal set-driven imprecise probabilities are {\em coherent} \citep[e.g.,][]{walley1991}, generalizing the notion put forth by de Finetti, Savage, Ramsey, and others for precise probability.
\end{remark}

\begin{remark}
\label{re:vac.supremum}
The simplification in \eqref{eq:vac.supremum} may not be immediately obvious, so here's an explanation.  Expectation of a random variable is linear in the distribution and, therefore, it's supremum over a closed and convex set of distributions is attained at the extremes, on the boundary.  If the prior is vacuous, so that the credal set contains all distributions, then the boundary consists of point mass distributions. Hence, the supremum expectation over all distributions is attained at a point mass distribution.
\end{remark}

\begin{remark}
\label{re:prob.to.poss}
The choice of prior contour in Section~\ref{SS:gains} is an application of the general {\em probability-to-possibility transform} described in, e.g., \citet{dubois.etal.2004}, \citet{hose.hanss.2021}, and \citet{hose2022thesis}.  Mathematically, the transformation is rather straightforward.  If $Y$ is a random vector taking values in $\YY$ with distribution $\prob_Y$ and corresponding density/mass function $f_Y$, then the probability-to-possibility transform results in a possibility measure with contour defined via  
\[ \psi(y) = \prob_Y\{ f(Y) \leq f(y) \}, \quad y \in \YY. \]
That $\psi$ is a possibility contour is easy to see: if $f_Y$ has mode $y^\text{mode}$, then $\psi(y^\text{mode})=1$; the unbounded-$f_Y$ case can be handled similarly.  In Section~\ref{SS:gains} of the main text, the contour that involved the chi-square distribution function is obtained by applying the above formula with $f_Y$ a Gaussian density.  What's notable about this particular possibility measure construction is that the credal set corresponding to $\psi$ is the smallest of all those credal sets that contain $\prob_Y$ and whose upper envelope is a possibility measure.  So, if, like in Section~\ref{SS:gains}, ones prior knowledge consists of only a surprise-assessment that agrees with a Gaussian probability, then the choice of prior possibility contour recommended there is the best choice in the sense that it's corresponds to the smallest (possibilistic) credal set consistent with the available prior knowledge.  
\end{remark}

\begin{remark}
\label{re:why.possibility}
A reasonable question is: why are {\em possibilistic} IMs appropriate?  To me, the most compelling justification comes from the uniform validity result in Corollary~\ref{cor:uniform.vac}.  The equivalence---see Equation~\eqref{eq:equivalence} below---in the proof holds for all imprecise-probabilistic IMs, but what I'm calling the ``contour'' $\pi^\eval$ has properties under a possibilistic formulation that it doesn't have under other formulations.  For a generic, data-dependent upper probability $\uPi_{Z^N}$ on $\TT$, the function $\theta \mapsto \uPi_{Z^N}(\{\theta\})$ doesn't completely determine $\uPi_{Z^N}$ and, moreover, it can happen that $\theta \mapsto \uPi_{Z^N}(\{\theta\})$ is always small, no matter what $Z^N$ is.  Then the probability (with respect to the sampling distribution of $Z^N$) that it's less than some $\alpha < 1$ could be large---perhaps even equal to 1.  It's unique to the possibilistic framework that the contour fully determines the upper probability and, moreover, takes values arbitrarily close to 1.  Without this special structure, strong validity and, hence, uniform validity can't be attained.  If it could be attained by some other, non-possibilistic IM construction, then Lemma~1 in \citet{martin.partial2} says that there's a possibilistic IM that's no worse in terms of efficiency.  The conclusion is that, if strong validity and the safety it offers is a priority, which it is to me, then the possibilistic formulation is without loss of generality/efficiency.  
\end{remark}

\begin{remark}
\label{re:typically}
The function $\pi_{z^n}^\eval$ fails to be a possibility contour at a given $z^n$ if and only if $\eval_\theta(z^n)$ is strictly greater than 1 for all $\theta$.  But e-processes have expected value upper-bounded by 1, so it'd be exceptionally rare, although not impossible, for $z^n$ to not be particularly compatible with any $\theta$, so that $\eval_\theta(z^n)$ is everywhere greater than 1 and, hence, $\pi_{z^n}^\eval$ is everywhere below 1. This doesn't affect the statistical properties, only the interpretation of the IM; see Appendix~\ref{S:euq.coherence} below. 
\end{remark}

\section{Choquet integration}
\label{A:choquet}


Choquet integration has an important role to play in the main paper's developments.  Basically, Choquet integration plays the same role in imprecise probability theory as Lebesgue integration does in ordinary/precise probability theory.  That is, just as Lebesgue integration is the go-to mathematical framework for defining expectation with respect to probability measures, Choquet integration is the appropriate way to extend lower/upper probabilities, at least for the kind of models in consideration here, to more general lower/upper expectations.  The path to making this connection isn't exactly direct, and the details are too involved to present here, but I think a relatively brief overview would be beneficial.  My summary here is based primarily on details presented much more thoroughly and rigorously in \citet{lower.previsions.book}.  

Let $g: \TT \to \RR$ be a function, which I'll assume to be non-negative only for simplicity; if $g$ can take both positive and negative values, then one apply the developments here to the difference between the positive and negative parts of $g$.  If $\uprior$ is a general capacity---a normalized, monotone set function---supported on subsets of $\TT$, then the Choquet integral of $g$ with respect to $\uprior$ is defined as 
\begin{equation}
\label{eq:choquet}
\mathcal{I}_\text{\sc choq}(g) := \int_0^\infty \uprior\{ \theta \in \TT: g(\theta) \geq t \} \, dt, 
\end{equation} 
where ``$\int$'' on the right-hand side is a Riemann integral, which is well-defined since the integrand is a monotone non-increasing function of $t$.  In some sense, there's nothing particularly special about defining an ``integral''---anyone can do it.  The challenge is defining an integral that represents something relevant.  In the present context, the most meaningful notion of an expected value of $g(\Theta)$ with respect to a coherent upper probability $\uprior$ (defined on all subsets $H$ of $\TT$) is the upper envelope 
\[ \uprior\,g = \sup_{\prior \in \credal} \E^{\Theta \sim \prior}\{ g(\Theta) \}, \]
which is the second expression given in Equation~(6) of the main paper; recall that $\credal$ here is the set of all probabilities dominated by $\uprior$, as in Equation~(5).  How is this connected to the Choquet integral?

What links the Choquet integral above to the upper expectation is a deep result of Walley's, concerning the so-called {\em natural extension} of $\uprior$ from an upper probability to an upper expectation.  In a purely mathematical sense, one may have a function $f$ defined on a domain $\XX$ with certain properties, and the relevant question is if $f$ can be extended from $\XX$ to a function $f^\star$ defined on a larger domain $\XX^\star$, such that there's agreement on $\XX$, i.e., $f^\star(x) = f(x)$ for $x \in \XX$, and $f^\star$ maintains $f$'s relevant properties on $\XX^\star \setminus \XX$.  In the present context, the upper envelope can be viewed as a functional $1_H \mapsto \uprior\,1_H := \uprior(H)$ defined on the collection $\{\theta \mapsto 1_H(\theta): H \subseteq \TT\}$ of indicator functions/gambles.  This functional has a coherence property, by assumption, so the question is if it can be extended to a broader class of (bounded\footnote{Walley's developments focus on bounded gambles, but Part~II of \citet{lower.previsions.book} generalizes Walley's results to certain unbounded gambles.}) gambles without sacrificing coherence.  \citet[][Ch.~3]{walley1991} answers this question in the affirmative, with what he calls the {\em natural extension}---the extension that imposes the least additional structure while preserving coherence.  On the importance of natural extension, \citet[][p.~121--122]{walley1991} writes:
\begin{quote}
{\em ...natural extension may be seen as the basic constructive step in statistical reasoning; it enables us to construct new previsions from old ones.}
\end{quote}
The formula for the natural extension is rather complicated and not necessary for the present purposes.  The relevant point here is that the {\em upper envelope theorem}\footnote{As the title of their book suggests, \citet{lower.previsions.book} focus almost exclusively on {\em lower} previsions, and what they prove is a {\em lower} envelope theorem.  There is, however, an analogous result for the upper prevision and that's what I'm referring to here as the {\em upper envelope theorem}.} in \citet[][Theorem~4.38]{lower.previsions.book} links the upper expectation $\uprior\,g$ to Walley's natural extension of $\uprior$ to (bounded) gambles.  Then they follow up \citep[][Theorem~6.14]{lower.previsions.book} by linking the natural extension of $\uprior$ to the Choquet integral in \eqref{eq:choquet}.  Therefore, the ``$\mathcal{I}_\text{\sc choq}(g)$'' notation can be dropped---the Choquet integral and the upper expectation $\uprior\,g$ are the same, so the latter notation is sufficient.

Then the formula given in Equation~(8) for the upper expectation with respect to a possibility measure $\uprior$ determined by contour $q$ follows immediately---or at least {\em almost} immediately.  Using the definition $\uprior(H)$ via optimization as in Equation~(7), the Choquet integral formula \eqref{eq:choquet} above reduces to
\[ \uprior\,g = \int_0^\infty \Bigl\{ \sup_{\theta: g(\theta) \geq t} q(\theta) \Bigr\} \, dt. \]
This expression looks similar to the formula given in Equation~(8), but it's not the same; the latter roughly has the roles of $g$ and $q$ in the above expression reversed.  This apparent ``symmetry'' in the roles of $g$ and $q$, and the corresponding alternative form of the Choquet integral as I advertised in Equation~(8), is established in Proposition~7.14 (and Proposition~C.8) of \citet{lower.previsions.book} for the case of bounded $g$; this is generalized to certain unbounded gambles $g$ in, e.g., their Proposition~15.42. 

For a bit of practice with the possibility-theoretic Choquet integral, I'll first demonstrate that Choquet integration formula in Equation~(8) for a possibility measure $\uprior$ reduces to the definition of upper probability in Equation~(7), via optimization of the contour $q$, when the function $g$ is an indicator, i.e., $g(\theta) = 1(\theta \in H)$ for some $H \subseteq \TT$.  For such a case, the integrand in Equation~(8) is given by 
\[ s \mapsto \sup_{\theta: q(\theta) \geq s} 1(\theta \in H) = \begin{cases} 1 & \text{if $s \leq \sup_{\theta \in H} q(\theta)$} \\ 0 & \text{otherwise}. \end{cases} \]
Then it's clear that 
\begin{align*}
\uprior\,g & = \int_0^1 \sup_{\theta: q(\theta) > s} 1(\theta \in H) \, ds \\
& = \int_0^{\sup_{\theta \in H} q(\theta)} 1 \, ds + \int_{\sup_{\theta \in H} q(\theta)}^1 0 \, ds \\
& = \sup_{\theta \in H} q(\theta) \\
& = \uprior(H), 
\end{align*}
as was to be shown.  Warm-up complete. 

Next, I have a slightly more ambitious goal of verifying the formula below that was presented without proof in Section~4.5.2 of the main paper:
\begin{align*}
\uPi_{z^n}^{\eval} \, \ell_{\hat a(z^n)} =  \int_0^1 \Bigl\{ \sup_{\theta: \pi_{z^n}^{\eval}(\theta) \geq s} \ell_{\hat a(z^n)}(\theta) \Bigr\} \, ds = n^{-1}\bigl\{ 2 + \log(nv + 1) + (\tfrac{n}{nv+1}) \bar z_n^2 \bigr\}.
\end{align*}
In this case, the contour is $\pi_{z^n}^\eval(\theta) = 1 \wedge \eval_\theta(z^n)^{-1}$, where 
\[ \eval_\theta(z^n) = (nv + 1)^{-1/2} \exp\bigl\{ \tfrac{n}{2} (\theta - \bar z_n)^2 - \tfrac12 ( \tfrac{n}{nv + 1} ) \, \bar z_n^2 \bigr\}, \quad \theta \in \RR. \]
Then 
\begin{align*}
\pi_{z^n}^\eval(\theta) \geq s & \iff 1 \wedge \eval_\theta(z^n)^{-1} \geq s \\
& \iff \eval_\theta(z^n) \leq s^{-1} \\
& \iff (\theta - \bar z_n)^2 \leq n^{-1} \bigl\{ 2\log(s^{-1}) + \log(nv+1) + (\tfrac{n}{nv+1}) \bar z_n^2 \bigr\}.
\end{align*}
It just so happens that $\ell_{\hat a(z^n)}(\theta) = (\theta - \bar z_n)^2$, and from this it's clear that 
\[ \sup_{\theta: \pi_{z^n}^{\eval}(\theta) \geq s} \ell_{\hat a(z^n)}(\theta) = n^{-1} \bigl\{ 2\log(s^{-1}) + \log(nv+1) + (\tfrac{n}{nv+1}) \bar z_n^2 \bigr\}. \]
So it remains to integrate the right-hand side above with respect to $s$ over the interval $[0,1]$.  Using the identity 
\[ \tfrac{d}{ds} (s - s \log s) = \log(s^{-1}), \]
and the fundamental theorem of calculus, it follows that 
\begin{align*}
\uPi_{z^n}^{\eval} \, \ell_{\hat a(z^n)} & = \int_0^1 n^{-1} \bigl\{ 2\log(s^{-1}) + \log(nv+1) + (\tfrac{n}{nv+1}) \bar z_n^2 \bigr\} \, ds \\
& = n^{-1}\bigl\{ 2 + \log(nv + 1) + (\tfrac{n}{nv+1}) \bar z_n^2 \bigr\}, 
\end{align*}
as was to be shown.

\section{More details about regularizers}
\label{A:other}

Towards a more concrete understanding of what it takes to achieve the condition in Definition~\ref{def:regularizer} in the main text, consider the case of finite $\TT$.  Then Your prior credal set $\credal$ can be interpreted as a collection of probability mass functions, and 
\[ \uprior\rho = \sup_{\prior \in \credal} \underbrace{\sum_{\theta \in \TT} \rho(\theta) \, \prior(\{\theta\})}_{\prior\rho := \E^{\Theta \sim \prior}\{ \rho(\Theta)\}} \leq \sum_{\theta \in \TT} \rho(\theta) \, \sup_{\prior \in \credal} \prior(\{\theta\}). \]
Since the functional $\prior \mapsto \prior\rho$ is linear and the domain $\credal$ is closed and convex, it's well-known (related to the Krein--Milman theorem)
that the supremum is attained on the extreme points of $\credal$.  Then it's clear that, for each (sub-)probability mass function\footnote{A sub-probability mass function $\eta \geq 0$ satisfies $\sum_{\theta \in \TT} \eta(\theta) \leq 1$. In the present case, there's no reason not to take $\eta$ as a genuine probability mass function whose sum is exactly equal to 1.} 
$\eta$ on $\TT$, the following function $\rho_\eta$ is a regularizer:
\[ \rho_\eta(\theta) = \frac{\eta(\theta)}{\sup_{\prior \in \credal} \prior(\{\theta\})}, \quad \theta \in \TT. \]
Note that the form $\rho_\eta$ above mimics the universal inference formulation of an e-process in \citep[e.g.,][]{wasserman.universal}, albeit in a different context.  Two special cases deserve mention.  First, if $\credal$ is a singleton, then all the admissible regularizers would be of the form $\rho_\eta$ above, for some probability mass function $\eta$.  Second, if $\credal$ is vacuous, i.e., if it contains all the probability mass functions on $\TT$, then the denominator is constant equal to 1 and, since $\eta$ can't exceed 1, neither can $\rho_\eta$, hence, it's trivial.  So, clearly, it's a bad idea for You to plug a ``vacuous prior'' into the regularizer construction.  If Your prior information is genuinely vacuous, then just use $\rho \equiv 1$, so that Your regularized e-process matches the original, unregularized e-process.  

In the main paper I focused exclusively on regularizers for the case where Your prior information is encoded as a possibility measure with contour $q$.  I did so for two reasons: (a)~I think the possibilistic formulation makes sense, and (b)~it's relatively concrete to handle compared to other formulations.  But this isn't the only option, so here I want to briefly mention a couple other strategies.  This is absolutely not intended to be an exhaustive list of alternatives, nor is it my intention for this to be a tutorial on how to construct a regularizer in a non-possibilistic setup.  My goal is simply to expose the reader to some other avenues to pursue if the possibilistic version I presented in the main paper isn't fully satisfactory; this also demonstrates my point that the possibilistic formulation is simpler and more concrete. 

Arguably one of the most common imprecise probabilities models are the so-called {\em contamination classes}, {\em gross error models}, or {\em linear--vacuous mixtures}, often found in the literature on robust statistics \citep[e.g.,][]{huber1973.capacity, huber1981, walley1991, walley2002, wasserman1990}.  This corresponds to a choice of centering probability $\prior_\text{cen}$ and a weight $\eps \in (0,1)$.  Then the credal set is given by 
\[ \credal = \bigl\{ (1-\eps) \, \prior_\text{cen} + \eps \, \prior: \, \text{$\prior$ is any probability on $\TT$} \bigr\}. \]
The basic idea is that You think the precise probability $\prior_\text{cen}$ is a pretty good assessment of Your uncertainty about $\Theta$, but You don't fully trust the information that lead to this assessment; so $\eps$ is like the ``chance You were misled,'' and if You were mislead, then literally anything might be true.  It's relatively clear that the corresponding upper probability/prevision, say $\uprior$, has upper expectation of $\rho(\Theta)$ given by 
\[ \uprior\,\rho = (1-\eps) \, \prior_\text{cen}\,\rho + \eps \times \sup_{\theta \in \TT} \rho(\theta). \]
Then the goal is to choose $\rho$ such that the right-hand side above is (less than or) equal to 1.  There's no simple formula for this, like in the case of possibilistic prior information, but the idea is take $\rho$ such that it's relatively small where You ``expect'' $\Theta$ to be, i.e., in the support of $\prior_\text{cen}$, and then somewhat large elsewhere.  If, for example, $\prior_\text{cen}$ has bounded support, so that $\rho$ can be defined independently on that support and elsewhere, then $\rho$ can take values as large as $\eps^{-1}(1-\eps) m$ outside that support, where $m = \prior_\text{cen}\,\rho$. 

A second kind of imprecise probability model is what \citet[][Sec.~2.9.4]{walley1991} calls the {\em constant odds ratio} model.  Similar to that above, this is indexed by a precise probability distribution, which I'll denote again as $\prior_\text{cen}$, and a weight $\tau \in (0,1)$.  Walley explains this model in the context of a risky investment, where $\tau$ represents the rate at which You're taxed on said investment.  Setup and context aside, the lower and upper probability of a hypothesis/event $H$ under this model is 
\[ \lprior(H) = \frac{(1-\tau) \prior_\text{cen}(H)}{1 - \tau \prior_\text{cen}(H)} \quad \text{and} \quad \uprior(H) = \frac{\prior_\text{cen}(H)}{1-\tau \prior_\text{cen}(H)}. \]
The name ``constant odds ratio model'' comes from the fact that the lower and upper odds of $H$ versus $H^c$ are 
\[ \frac{\lprior(H)}{\uprior(H^c)} = \frac{(1-\tau) \prior_\text{cen}(H)}{\prior_\text{cen}(H^c)} \quad \text{and} \quad \frac{\uprior(H)}{\lprior(H^c)} = \frac{\prior_\text{cen}(H)}{(1-\tau)\prior_\text{cen}(H^c)}, \]
respectively, so the rato of lower to upper odds is constant in $H$:
\[ \frac{\lprior(H) / \uprior(H^c)}{\uprior(H) / \lprior(H^c)} = \cdots = (1-\tau)^2. \]
The upper expectation of $\rho(\Theta)$ under this model is not so straightforward as for the contamination model above, but Walley shows that $\uprior\,\rho$ solves the equation $f(x)=0$, where $f(x) = \tau \, \prior_\text{cen}(\rho - x)^+  + (1-\tau) \, ( \prior_\text{cen}\,\rho - x )$, with $w^+ = \max(w, 0)$ the positive part of $w \in \RR$.  For a given $\rho$, one can numerically solve for $x$ as a function of $(\tau, \prior_\text{cen}\,\rho)$, to obtain $\uprior\,\rho$.  But choosing $\rho$ such that this numerical solution is $\leq 1$ requires care. 

Other kinds of imprecise models can be considered, including monotone capacities \citep[e.g.,][]{huber1973.capacity, wasserman.kadane.1990, sundberg.wagner.1992}, belief functions \citep[e.g.,][]{denoeux1999, shafer1976, dempster1967}, and probability-boxes \citep[e.g.,][]{destercke.etal.pbox, ferson.etal.pbox}, and formulas for their upper expectations can be found in, e.g., Chapters~6--7 of \citet{lower.previsions.book}.  



\section{Justification of the product form in Eq.~\eqref{eq:reg.eprocess}}
\label{SS:product}

\subsection{Bayesian-like updating coherence}

Here I'll present the previously-advertised justification for my choice to define the regularized e-process in Equation~(10) as a product of the regularizer and the original e-process.  Actually, I'll present two such justifications, the first of which is based on an analogy to the updating coherence property familiar in Bayesian inference, i.e., ``today's posterior is tomorrow's prior.''  To see this, let $z^n \equiv z^{1:n}$ be the data, processed as $\eval_\theta(z^{1:n})$, and $\rho(\theta)$ the regularizer.  Combining these according to Equation~\eqref{eq:reg.eprocess} gives the regularized e-process $\eval^\text{reg}(z^n, \theta)$.  Now, suppose that more data $z^{n:(n+m)}$ becomes available.  Since e-processes are typically combined via multiplication, I can write 
\[ \eval^\text{reg}(z^{1:(n+m)}, \theta) = \eval_\theta(z^{1:(n+m)}) \times \rho(\theta) = \eval_\theta(z^{n:(n+m)}) \times \underbrace{\eval_\theta(z^{1:n}) \times \rho(\theta)}_{\eval^\text{reg}(z^{1:n}, \theta)}. \]
That is, the regularized e-process based on $z^{1:n}$ becomes the updated, old-data-dependent version of the regularizer that's combined with a new-data-dependent e-process according to the rule in  Equation~\eqref{eq:reg.eprocess} of the main paper. 

\subsection{Formal dominance}

The second justification is based on a more formal demonstration that the product form dominates other strategies.  Frankly, this justification isn't much more compelling than the fact that multiplying the two ingredients is clearly the most natural way to merge them.  There are, however, some other reasonable options, e.g., averaging, so the result below adds some valuable insight.  
The reader may have ideas on how to strengthen this result in one way or another.  

Throughout this section, to simplify notation, etc., I'll assume that $f$ is the identity function, so that the quantity of interest $\Theta$ corresponds exactly with $\Omega$.  This helps because it allows me to drop $f$ (and $\Omega$) from the notation, to drop the set of pullback measures in the theoretical formulation, and to express the model as ``$\prob_\theta$,'' directly in terms of $\theta$.  I'm also going to drop the explicit mention of a generic stopping time, and write ``$Z$'' for the observable data.  Since the main result of this section is conceptual in nature, these simplifications don't affect the take-away message.  

Following \citet{vovk.wang.2021}, define a function $(r,e) \mapsto m(r,e)$ to be a {\em re-merging function}---pronounced ``R-E-merging'' because it merges a {\bf r}egularizer and an {\bf e}-process---if, for  given prior information $\uprior$ about $\Theta=f(\Omega)$ and a corresponding regularizer $\rho$, the merged variable $m(\rho, \eval)$ satisfies two key properties:
\begin{itemize}
\item It treats the data as sovereign in the sense that 
\begin{equation}
\label{eq:sovereign}
e \geq 1 \implies m(r, e) \geq r, \quad \text{for all $r \in (0,\infty)$}, 
\end{equation}
which, in words, means that if the data-dependent component, $\eval_\theta(\cdot)$, offers evidence that doesn't favor hypothesis ``$\Theta=\theta$,'' then the merged e-process will show less support for that hypothesis than the prior information alone did.
\vspace{-2mm}
\item It's a regularized e-process in the sense that the property advertised in Equation~\eqref{eq:reg.ville0} holds for any input $\eval$, i.e., 
\begin{equation}
\label{eq:merging}
\uuprob\bigl[ m\{ \rho(\Theta), \eval_{\Theta}(Z) \} \bigr] \leq 1 \quad \text{for all e-processes $\eval$}. 
\end{equation}
\end{itemize} 
Recall that $\uuprob$ is determined by the model and by the prior information, so $\uuprob$ is fixed by the context of the problem---all that's free to vary in these considerations is the input e-process $\eval$ and the merger function $m$. The class of re-merging functions is non-empty, since the product mapping is an re-merger.  My claim is that, in a sense to be described below, the product merger in Equation~\eqref{eq:reg.eprocess} is ``best'' among all the re-merging functions.

Continuing to follow \citet{vovk.wang.2021}, I'll say that an re-merging function $m$ weakly dominates another re-merging function $m'$ if 
\begin{equation}
\label{eq:dominance}
(r,e) \in [1, \infty) \times [1, \infty) \implies m(r,e) \geq m'(r,e). 
\end{equation}
The idea is that, in the case where neither the data nor the prior show signs of compatibility with a given value $\theta$, then merging based on $m$ is more aggressive, i.e., shows no less evidence against $\theta$, than merging based on $m'$.  The complement to an e-process's anytime validity property is its efficiency, and efficiency requires that the e-process take large values when evidence is incompatible with a hypothesis in question; so, an re-merging function that returns larger regularized e-process values is preferred.  

The present case differs in many ways from that in \citet{vovk.wang.2021}, mainly due to the presence of the prior information, so I'll need some additional control.  Towards this, I'll say that an re-merging function $m$ {\em $\uuprob$-strictly weakly dominates} another re-merging function $m'$ if $m$ weakly dominates $m'$ in the sense of \eqref{eq:dominance} above, and if
\begin{align}
\uuprob \bigl[ m\{ \rho(\Theta), \eval_\Theta(Z) \} > m'\{ \rho(\Theta), \eval_\Theta(Z) \}, \; & \notag \\
\rho(\Theta) \geq 1, \, \eval_\Theta(Z) \geq 1 & \bigr] > 0, \quad \text{for all e-processes $\eval$}. \label{eq:uuprob.dominance}
\end{align}
Vovk and Wang's ``weak dominance'' in \eqref{eq:dominance} allows $m$ and $m'$ to be the same, so roughly all that \eqref{eq:uuprob.dominance} adds is that there exists a joint distribution for $(Z,\Theta)$---corresponding to a prior $\prior$ in Your credal set $\credal$---with respect to which the ``strict inequality'' event on the right-hand side of \eqref{eq:dominance} has positive probability.  Therefore, $\uuprob$-strict weak dominance rules out the possibility that $m$ and $m'$ differ only in an insignificant way relative to $\uuprob$.  In the special case where the prior information is vacuous, like in Vovk and Wang, strict weak dominance holds if $m$ weakly dominates $m'$ and if, for each e-process $\eval$, there exists a $\theta=\theta(\eval)$ such that $\rho(\theta) \geq 1$ and
\[ \prob_\theta\bigl[ m\{ \rho(\theta), \eval_\theta(Z) \} > m'\{ \rho(\theta), \eval_\theta(Z) \}, \, \eval_\theta(Z) \geq 1 \bigr] > 0, \]
i.e., if roughly strict inequality holds with positive model probability for some $\theta$.  

In the case of vacuous prior information, I'll require $\rho \equiv 1$, as was suggested in the main paper.  When the prior information is non-vacuous, the result below is restricted to {\em non-trivial} regularizers, which was loosely defined earlier as a regularizer that's not upper bounded by 1.  Here, however, I need to be more specific about what non-trivial means: specifically, $\rho$ is non-trivial (relative to the partial prior information) if $\uprior\{ \rho(\Theta) > 1 \} > 0$.  In words, non-triviality means that there exists a probability $\prior$ such that $\rho(\Theta)$ isn't $\prior$-almost surely upper bounded by 1. Finally, I'll also say that a regularizer is {\em admissible} if $\uprior\rho=1$, that is, if it can't be made larger in any substantive way without violating the upper bound presented in Definition~\ref{def:regularizer}. 

The following is similar to Proposition~4.2 in \citet{vovk.wang.2021} on what they refer to as {\em ie-merging} functions for merging independent e-values.  I show that the product mapping isn't strictly weakly dominated by any other re-merging functions.  

\begin{prop}
\label{prop:dominates}
Given $\uprior$, fix a non-trivial, admissible regularizer $\rho$.  Then the product rule in Equation~\eqref{eq:reg.eprocess} isn't $\uuprob$-strictly weakly dominated by any other re-merging function. 
\end{prop}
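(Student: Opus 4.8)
The plan is to adapt the proof of Proposition~4.2 in \citet{vovk.wang.2021} to the present setting with prior information. I would argue by contradiction: assume some re-merging function $m$ $\uuprob$-strictly weakly dominates the product rule $m^\star(r,e)=re$ of \eqref{eq:reg.eprocess}, and then construct an e-process for which the merging property \eqref{eq:merging} fails, contradicting the assumption that $m$ is a re-merging function. The vacuous case is immediate: there $\rho\equiv1$, so $m^\star$ is the identity; plugging $\eval\equiv1$ into \eqref{eq:merging} gives $\uuprob[m(1,1)]=m(1,1)\le1$, while sovereignty \eqref{eq:sovereign} at $e=1$ gives $m(1,1)\ge1$, so $m(1,1)=1$ and the strict-inequality event in \eqref{eq:uuprob.dominance} is empty. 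Hence I may assume $\credal$ is non-vacuous and $\rho$ is non-trivial and admissible, i.e.\ $\uprior\{\rho(\Theta)>1\}>0$ and $\uprior\rho=1$.

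Next I would extract two structural facts. First, admissibility makes the generalized Ville bound \eqref{eq:reg.ville0} tight along the trivial e-process: since $\rho$ is data-free, $\uuprob[\rho(\Theta)\cdot1]=\uprior\rho=1$, so the product rule leaves no slack there. Second, $\uuprob$-strict weak dominance applied to $\eval\equiv1$, together with $m\ge m^\star$ on $[1,\infty)^2$ and sovereignty (which give $m(r,1)\ge r$ for all $r>0$), forces $\uprior(A)>0$ for the ``gap set'' $A=\{\theta:\rho(\theta)\ge1,\ m(\rho(\theta),1)>\rho(\theta)\}$, and hence some $\prior_0\in\credal$ charges $A$.

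The core step is to parlay these into a violation of \eqref{eq:merging}. Because $\uprior\rho=1$ I would take $\prior^\star\in\credal$ (or a maximizing sequence) with $\E^{\Theta\sim\prior^\star}[\rho(\Theta)]$ arbitrarily close to $1$, mix in $\prior_0$ via $\prior_\lambda=(1-\lambda)\prior^\star+\lambda\prior_0\in\credal$ to guarantee positive mass on $A$, and---if $\eval\equiv1$ does not already suffice---replace it by a capped ``indicator'' e-process $\eval^\dagger_\theta(z)=p_\theta^{-1}\mathbf1(z\in D_\theta)\wedge c$ that switches on precisely on a data event compatible with large $\rho$, chosen so that $(\rho(\Theta),\eval^\dagger_\Theta(Z))$ lands in the region $\{m>m^\star\}$ with $\prior_\lambda$-probability bounded below while $\E[\rho(\Theta)\eval^\dagger_\Theta(Z)]$ stays arbitrarily close to $1$. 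Evaluating $\uuprob[m\{\rho(\Theta),\eval^\dagger_\Theta(Z)\}]$ along this joint law then exceeds $1$, the desired contradiction.

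The step I expect to be the main obstacle is exactly this last construction, and the difficulty is a genuine imprecise-probability phenomenon rather than a bookkeeping nuisance: even with $g:=m(\rho(\cdot),1)\ge h:=\rho(\cdot)$ pointwise and $\uuprob[g]=\uuprob[h]=1$, one does \emph{not} get $\uuprob\{g>h\}=0$, because the supremum over $\credal$ defining $\uuprob[g]$ can be attained at a prior avoiding the gap set $A$. So a one-line monotonicity argument is unavailable; the delicate point is to exhibit a single prior in $\credal$ that both nearly maximizes the $\rho$-expectation \emph{and} assigns positive mass to a fixed piece of $A$, and to balance how much the mixing parameter $\lambda$ (or the probing event $D_\theta$) depresses that expectation against how much gap it buys. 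Convexity and closedness of $\credal$, non-triviality of $\rho$, and admissibility $\uprior\rho=1$ are precisely the ingredients that make this balance achievable.
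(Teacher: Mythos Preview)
Your overall strategy---assume a dominating re-merger $m$, build a test e-process, and show \eqref{eq:merging} fails---matches the paper's. The main difference is in the construction of the test e-process. Rather than starting with $\eval\equiv1$, locating a ``gap set'' $A$, and then attempting to balance a near-maximizing prior against one charging $A$ (possibly with a second auxiliary e-process $\eval^\dagger$), the paper builds a single e-process $\eval^\star$ from the outset: on $\{\rho(\theta)<1\}$ take $\eval^\star_\theta\equiv1$, and on $\{\rho(\theta)\ge1\}$ take $\eval^\star_\theta(Z)=2\cdot 1(Z\in\mathcal{E}_\theta)$ with $\prob_\theta(\mathcal{E}_\theta)=\tfrac12$. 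Sovereignty on $\{r<1,e\ge1\}$ and weak dominance on $\{r\ge1,e\ge1\}$ give the pointwise bound $m(\rho,\eval^\star)\ge\rho\,1_{\rho<1}+\rho\,\eval^\star\,1_{\rho\ge1}$; since $\E_\theta[\eval^\star_\theta]=1$ for every $\theta$, the inner expectation of the right side collapses to $\rho(\Theta)$, and admissibility $\uprior\rho=1$ makes the $\uuprob$-expectation of the lower bound exactly $1$. This single construction replaces both your $\eval\equiv1$ probe and your $\eval^\dagger$ patch.

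The imprecise-probability subtlety you flag---that $g\ge h$ pointwise together with $\uuprob\{g>h\}>0$ does not automatically yield $\uuprob\,g>\uuprob\,h$, because the two suprema over $\credal$ could be attained at different priors---is a genuine issue, and you are right to identify it as the crux. The paper does not resolve it via the convexity/mixing argument you sketch; it asserts directly that the pointwise ``$\ge$'' passes to strict inequality in $\uuprob$-expectation by invoking \eqref{eq:uuprob.dominance}. So your diagnosis of where the real work lies is accurate, and your proposed convex-combination route (mix a near-maximizer of $\E_\prior[\rho]$ with a prior charging the strict-inequality set) is exactly the kind of argument that would be needed to make that step rigorous; the paper leaves it at the level of an assertion.
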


\begin{proof}
The proof is by contradiction; that is, I'll assume that the product rule defined in Equation~\eqref{eq:reg.eprocess} is $\uuprob$-strictly weakly dominated in the sense above and show that this leads to a contradiction.  Let $m$ denote this assumed-to-exist dominant re-merging function. 

The assumed non-triviality of the regularizer $\rho$ implies existence of points $\theta$ such that $\rho(\theta) > 1$ and such that $\rho(\theta) \leq 1$, and neither of these sets have $\uprior$-probability 0.  Some of the $\theta$'s in the first set could have $\rho(\theta) = \infty$, but, those must have  $\uprior$-probability 0 for, otherwise, the property that $\uprior\,\rho \leq 1$ would be violated; recall that the assumed admissibility of $\rho$ means that $\uprior\,\rho=1$.  

From the assumed strict weak dominance of $m$, and from \eqref{eq:sovereign}, I can deduce the following bound for generic inputs $(r,e)$:
\begin{align*}
m(r,e) & = m(r,e) \, \bigl( 1_{r < 1, e < 1} + 1_{r \geq 1, e < 1} + 1_{r < 1, e \geq 1} + 1_{r \geq 1, e \geq 1} \bigr) \\
& \geq  m(r,e) \, 1_{r < 1, e < 1} + m(r,e) \, 1_{r \geq 1, e < 1} + r \, 1_{r < 1, e \geq 1} + re \, 1_{r \geq 1, e \geq 1},
\end{align*}
where the ``$r$'' factor in the third term is by \eqref{eq:sovereign} and the ``$re$'' factor in the fourth term is by the assumed weak dominance.  The first two terms depend on details of the particular choice of $m$ on the respective ranges of $(r,e)$, details that can't be controlled with only the information provided.  I can apply the trivial non-negativity bound, however, and, from the above display, conclude that 
\[ m(r,e) \geq r \, 1_{r < 1, e \geq 1} + re \, 1_{r \geq 1, e \geq 1}. \]
(In fact, with the input e-process to be constructed next, those two terms I lower-bounded by 0 would typically be equal to 0, so the above inequality isn't loose.)  The not-strict equality in the above display is pointwise in $(r,e)$, i.e., I can't rule out equality above for any given pair $(r,e)$.  But a pointwise lower bound isn't the goal---I'm aiming for a lower bound in upper expectation.  For this latter goal, I'll apply \eqref{eq:uuprob.dominance} to flip the not-strict inequality ``$\geq$'' in the above display a strict inequality; more on this below.  

Towards establishing a contradiction, I only need to produce one example of an input e-process such that the conclusion is problematic, and I'll do this with an incredibly simple e-process.  Specifically, I define the input e-process $\eval^\star$ as follows:
\begin{itemize}
\item if $\theta$ is such that $\rho(\theta) < 1$, then $\eval_\theta^\star(Z) \equiv 1$, and 
\vspace{-2mm}
\item if $\theta$ is such that $\rho(\theta) \geq 1$, then  
\[ \eval_\theta^\star(Z) = \begin{cases} 2 & \text{if $Z \in \mathcal{E}_\theta$} \\ 0 & \text{otherwise}, \end{cases} \]
where $\mathcal{E}_\theta$ is an event with $\prob_\theta(\mathcal{E}_\theta) = \frac12$.
\end{itemize} 
It's easy to check that $\E_\theta\{ \eval_\theta^\star(Z) \} = 1$ for all $\theta$, so $\eval^\star$ is a genuine e-process.  What's important about this particular e-process, as it pertains to the present proof, is that 
\[ 1_{\rho(\theta) < 1, \eval_\theta^\star \geq 1} = 1_{\rho(\theta) < 1} \quad \text{and} \quad \eval_\theta^\star \, 1_{\rho(\theta) \geq 1, \eval_\theta^\star \geq 1} = \eval_\theta^\star \, 1_{\rho(\theta) \geq 1}. \]
Therefore, from the pointwise analysis above, 
\begin{align*}
m\{ \rho(\Theta), \eval_\Theta^\star(Z) \} & \overset{\text{\tiny (+)}}{\geq} \rho(\Theta) \, 1_{\rho(\Theta) < 1, \eval_\Theta^\star(Z) \geq 1} + \eval_\Theta^\star(Z) \, \rho(\Theta) \, 1_{\rho(\Theta) \geq 1, \eval_\Theta^\star(Z) \geq 1} \\
& = \rho(\Theta) \, 1_{\rho(\Theta) < 1} + \eval_\Theta^\star(Z) \, \rho(\Theta) \, 1_{\rho(\Theta) \geq 1}. 
\end{align*}
The (+) symbol above is to remind the reader that, while ``$\geq$'' holds pointwise, there's actually more that can be said.  That is, in addition to ``$\geq$'' for every $(z,\theta)$ pair, there exists a joint distribution for $(Z,\Theta)$, compatible with the available prior information, such that strict inequality holds with positive probability.  This implies that the inequality ``$\geq$'' highlighted with (+) is {\em strict inequality} ``$>$'' in expectation with respect to the aforementioned joint distribution.  Then 
\begin{align}
\uuprob\bigl[ m\{ \rho(\Theta), \eval_\Theta^\star(Z) \} \bigr] & > \uuprob\bigl\{ \rho(\Theta) \, 1_{\rho(\Theta) < 1} + \eval_\Theta^\star(Z) \, \rho(\Theta) \, 1_{\rho(\Theta) \geq 1} \bigr\} \notag \\
& = \sup_{\prior \in \credal} \E^{(Z,\Theta) \sim \prob_\bullet \otimes \prior} \bigl\{ \rho(\Theta) \, 1_{\rho(\Theta) < 1} + \eval_\Theta^\star(Z) \, \rho(\Theta) \, 1_{\rho(\Theta) \geq 1} \bigr\} \notag \\
& = \uprior\bigl\{ \rho(\Theta) \, 1_{\rho(\Theta) < 1} + \rho(\Theta) \, 1_{\rho(\Theta) \geq 1} \bigr\} \label{eq:dominance.a} \\
& = \uprior\,\rho \notag \\
& = 1, \label{eq:dominance.b}
\end{align}
where \eqref{eq:dominance.a} holds because $\eval^\star$ is an e-process relative to the model, i.e., $\E_\theta(\eval_\theta^\star) = 1$ for all $\theta$, and \eqref{eq:dominance.b} holds by the assumed admissibility of the regularizer $\rho$.  Therefore, the defining property \eqref{eq:merging} of an re-merging fails for the chosen $m$; that is, I constructed an e-process $(\eval^\star)$ such that $\uuprob[ m\{ \rho(\Theta), \eval_\Theta^\star(Z) \} ] > 1$.  Since $m$ was assumed to be re-merging, this creates the desired contradiction.  So, I conclude that, as claimed, there's no re-merging function $m$ that $\uuprob$-strictly weakly dominates the product rule in Equation~\eqref{eq:reg.eprocess}.  
\end{proof}

\section{More efficiency gains}
\label{S:emp.efficiency}

To follow up on the illustrations given in the main text, here I'll consider two different types of prior information and associated prior possibility contours.  As before, these both will be indexed by a parameter $K$, although the meaning of $K$ will be different in each case.  Consequently, the corresponding regularized e-processes will not be comparable between the two types of prior information here, but they will be comparable as $K$ varies within the types.  Here, $K \in \{0.1, 0.2, 0.4, 0.8\}$ and the two types of prior information I'll consider are below; first an explanation in words and then a mathematical description. 
\begin{enumerate}
\item ``You expect that $|\Theta| \leq K$.'' This is pretty clear in words but, mathematically, this corresponds to a credal set $\credal$ that contains exactly those distributions $\prior$ such that $\E^{\Theta \sim \prior}|\Theta| \leq K$.  Of course, this includes certain Gaussian, uniform, and even some heavier-tailed priors. As shown in \citet{dubois.etal.2004} and elsewhere, this prior information can be described mathematically via the possibility contour 
\[ q(\theta) = 1 \wedge K|\theta|^{-1}, \quad \theta \in \TT. \]
\item ``You're at most $100K/5\%$ sure that $|\Theta| > 2K$.'' This means that You can't rule out the possibility of $|\Theta| \leq 2K$, but that You judge the probability of the event $|\Theta| > 2K$ to be upper-bounded by $K/5$.  Mathematically, this can be described easily by the possibility contour 
\[ q(\theta) = \tfrac{K}{5} \, 1(|\theta| > 2K) + 1( |\theta| \leq 2K ), \quad \theta \in \TT. \]
I'm using the $K$-dependent weights only so that the different lines in the plots shown in Figure~\ref{fig:prob} don't overlap on a large part of the range of $\theta$ values. 
\end{enumerate} 

These two kinds of prior information are rather weak, only based on some very basic judgments about a first moment and the probability of a single event.  Figures~\ref{fig:mean} and \ref{fig:prob} plot the regularized and unregularized log-transformed e-process as a function of $\theta$ for the three types of priors, respectively, and for three different values of the observed sample mean $\bar z$ as in the main text, based on a sample of size $n=5$.
The black line corresponds to the unregularized e-process, and the four colored lines correspond to the different values of $K$; the dashed horizontal line corresponds to $-\log0.05 \approx 3$, which is the cutoff that determines the (regularized) e-process's 95\% confidence interval.  Not surprisingly, given that prior Types~1--2 are rather weak, the effect of regularization is difficult to detect.  There is a small efficiency gain in Figures~\ref{fig:mean}--\ref{fig:prob}, more so in the latter, since the prior does more than strictly constrain $\Theta$ to an interval around the origin.  

\begin{figure}[t]
\begin{center}
\subfigure[$\bar z=0.25$]{\scalebox{0.41}{\includegraphics{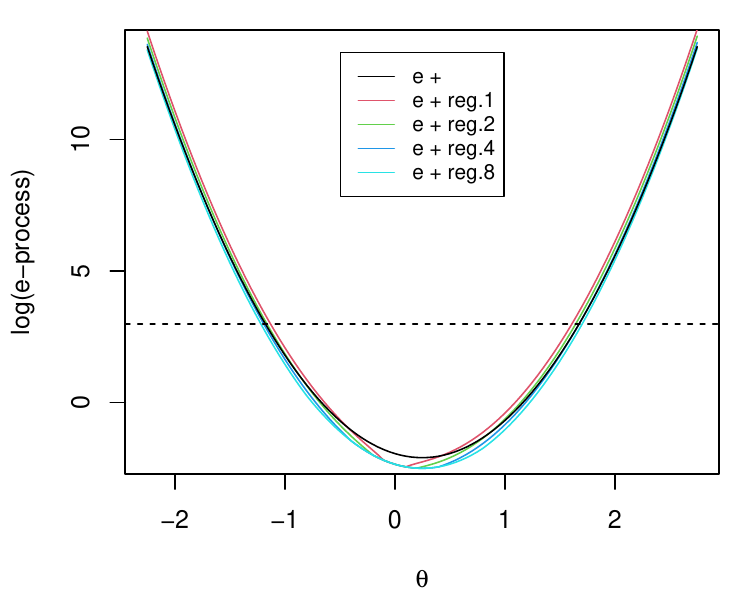}}}
\subfigure[$\bar z=0.5$]{\scalebox{0.41}{\includegraphics{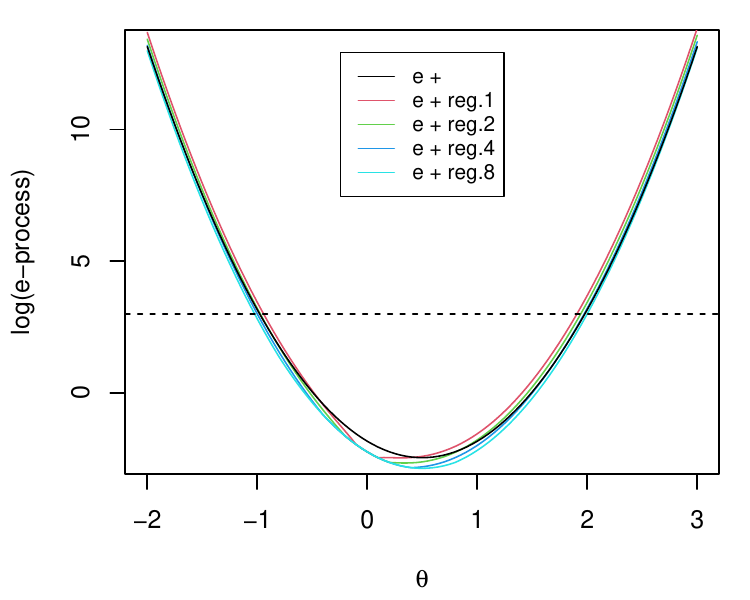}}}
\subfigure[$\bar z=1$]{\scalebox{0.41}{\includegraphics{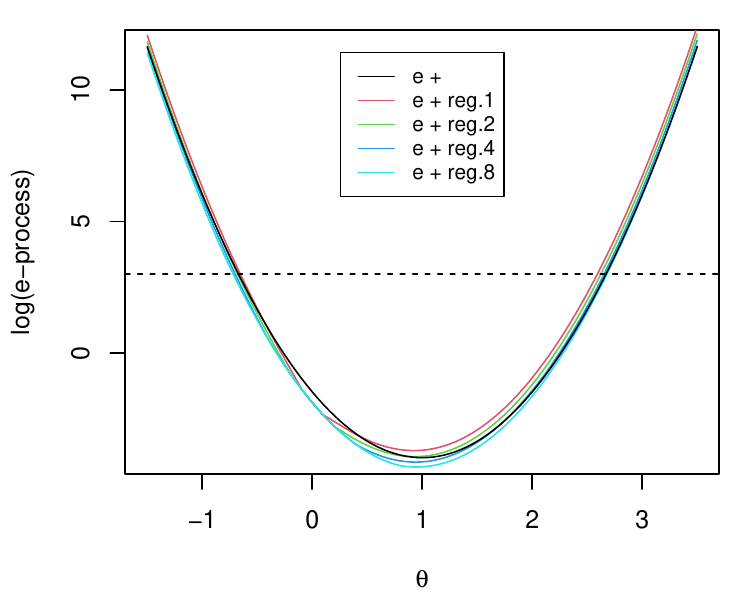}}}
\end{center}
\caption{Plot of $\theta \mapsto \eval^\text{reg}(z^n, \theta)$ for three different data sets $z^n$ based on prior Type~1.}
\label{fig:mean}
\end{figure}

\begin{figure}[t]
\begin{center}
\subfigure[$\bar z=0.25$]{\scalebox{0.41}{\includegraphics{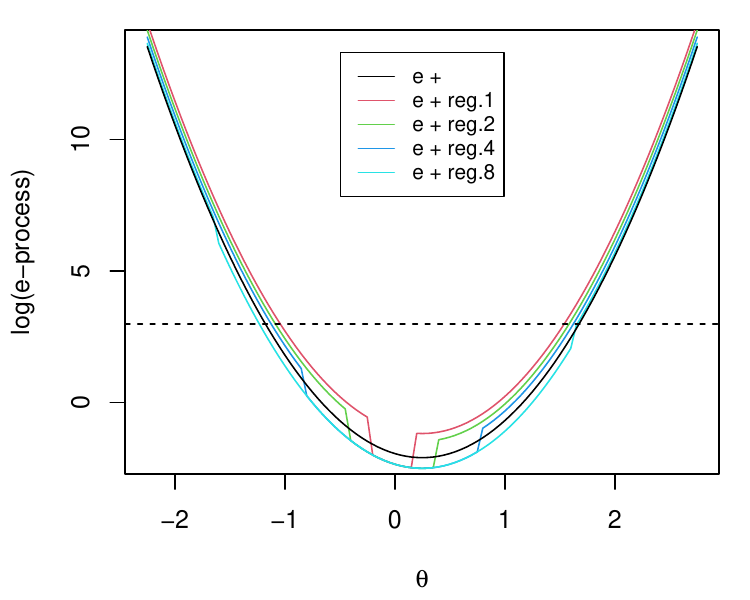}}}
\subfigure[$\bar z=0.5$]{\scalebox{0.41}{\includegraphics{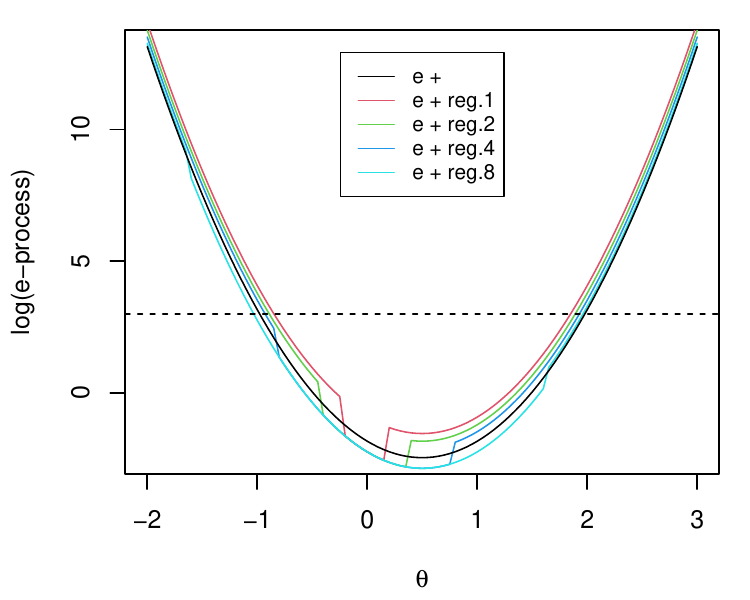}}}
\subfigure[$\bar z=1$]{\scalebox{0.41}{\includegraphics{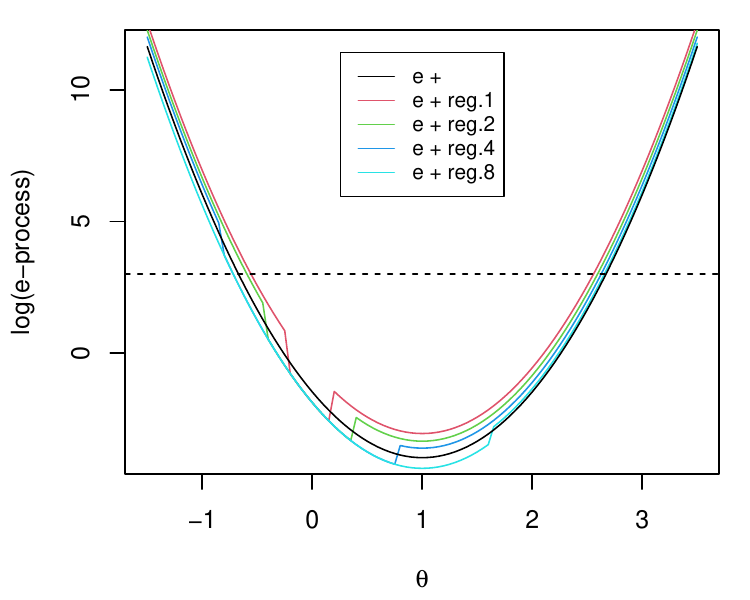}}}
\end{center}
\caption{Plot of $\theta \mapsto \eval^\text{reg}(z^n, \theta)$ for three different data sets $z^n$ based on prior Type~2.}
\label{fig:prob}
\end{figure}

\section{Anytime validity implies no-sure-loss}
\label{S:euq.coherence}

When data $Z^N=z^n$ is fixed, the e-possibilistic IM typically defines an  imprecise probability model where $\uPi_{z^n}^{\eval \times \rho}$ has the mathematical properties of a possibility measure.  Provided that the function $\theta \mapsto \pi_{z^n}^{\eval \times \rho}(\theta)$ isn't bounded away from 1, which is typically the case, the function $H \mapsto \uPi_{z^n}^{\eval \times \rho}(H)$ determined by maximizing a possibility contour function over the set $H \subseteq \TT$ as in \eqref{eq:upper.p.poss} is a genuine possibility measure.  This implies that its credal set is non-empty and, in turn, that the IM output is {\em coherent} in the sense of \citet[][Sec.~2.5]{walley1991}.  For the present purposes, it's enough to understand coherence as a stronger version of {\em no-sure-loss}, which can be described as follows. Suppose beliefs concerning the uncertain value $\Theta$ are assessed via the buying/selling prices they consider acceptable for certain gambles about $\Theta$.  Let $z^n$ be one of the aforementioned typical data sets and let Your buying/selling prices be determined by the IM output $(\lPi_{z^n}^{\eval \times \rho}, \uPi_{z^n}^{\eval \times \rho})$ according to the interpretation in \eqref{eq:prices}.  Then coherence of Your IM implies that
\[ \sup_{\theta \in \TT} \sum_{k=1}^K \{ \uPi_{z^n}^{\eval \times \rho}(H_k) - 1(\theta \in H_k)\} \geq 0 \quad \text{for all $K$ and $(H_1,\ldots,H_K)$ combos}. \] 
To get the intuition, suppose the above condition fails.  Then there exists a combination $K$ and $(H_1,\ldots,H_K)$, and a sufficiently small $\delta > 0$, such that 
\[ \sup_{\theta \in \TT} \sum_{k=1}^K \bigl[ \{ \uPi_{z^n}^{\eval \times \rho}(H_k) + \delta\} - 1(\theta \in H_k) \bigr] < 0. \]
Since $\uPi_{z^n}^{\eval \times \rho}(H_k)$ is, by definition, Your infimum selling price for the gamble $1(\Theta \in H_k)$, the transactions where You accept payment of $\uPi_{z^n}^{\eval \times \rho}(H_k) + \delta$ dollars for \$$1(\Theta \in H_k)$ for each $k$ are all acceptable to You.  But then the above display reveals a troubling result: somehow, by only making transactions that are acceptable {\em a priori}, You end up with negative total earnings regardless of what value the uncertain $\Theta$ takes on.  This indicates a severe shortcoming in Your pricing scheme; fortunately, the e-possibilistic IM is typically free of this internal inconsistency.  

I said ``typically'' several times in the above paragraph, and the explanation here is exactly the same as in Remark~\ref{re:typically} above.  The IM output $\uPi_{z^n}^{\eval \times \rho}$ would fail to be a possibility measure if and only if $\pi_{z^n}^{\eval \times \rho}$ was bounded away from 1 on $\TT$; I've been calling that function a ``possibility contour'' but that's only legitimate if $\sup_{\theta \in \TT} \pi_{z^n}^{\eval \times \rho}(\theta) = 1$.  For $\pi_{z^n}^{\eval \times \rho}(\theta)$ to be strictly less than 1, and hence the IM output determines an incoherent imprecise probability, would require that $\theta \mapsto \eval^\text{reg}(z^n, \theta)$ also be bounded strictly greater than 1.  But the regularized Ville's inequality implies that $\uuprob(\eval^\text{reg}) \leq 1$, i.e., $\eval^\text{reg}(Z^N,\Theta)$ ``tends'' to be less than 1, so a data set $z^n$ could indeed be called atypical if it were such that $\eval^\text{reg}(z^n, \theta)$ were strictly greater than 1 for all $\theta$.  

In addition the fixed-data behavioral considerations, it's natural to interpret $(\credal, z^n) \mapsto (\lPi_{z^n}^{\eval \times \rho}, \uPi_{z^n}^{\eval \times \rho})$ as a rule by which ``prior'' information is {\em updated} in light of data ($z^n$) to a ``posterior'' quantification of uncertainty.  More familiar notions of imprecise-probabilistic updating include generalized Bayes rule \citep{walley1991, miranda.cooman.chapter} and Dempster's rule \citep[e.g.,][]{shafer1976, cuzzolin.book}.
With this ``updating rule'' interpretation comes further questions about the IM's ability to protect You from sure loss, etc.  What's different here is that there's a temporal component: can I force You into transactions such that, no matter what data is observed, You lose money?  If so, then there's a serious issue with Your assessments. Mathematically, this {\em sure loss} property---see, e.g., \citet[][Sec.~2.4.1]{walley1991} and \citet[][Def.~3.3]{gong.meng.update}---corresponds to existence of a hypothesis $H \subset \TT$ such that 
\begin{equation}
\label{eq:sure.loss}
\sup_{z^n} \uPi_{z^n}^{\eval \times \rho}(H) < \lprior(H) \quad \text{or} \quad  \inf_{z^n} \lPi_{z^n}^{\eval \times \rho}(H) > \uprior(H). 
\end{equation}
For intuition, consider the first of the above two inequalities.  If this inequality holds, then, for any pair of positive numbers $(\eps, \delta)$, You'd be willing to buy the gamble \$$1(\Theta \in H)$ from me for $\$\{\lprior(H) - \eps\}$ and then sell the same gamble back to me, after observing $z^n$, for \$$\{\sup_{z^n} \uPi_{z^n}^{\eval \times \rho}(H) + \delta\}$.  No matter whether $\Theta \in H$ or $\Theta \not\in H$, the payoff You receive from this sequence of transactions is 
\[ \Bigl\{ \sup_{z^n} \uPi_{z^n}^{\eval \times \rho}(H) + \delta \Bigr\} - \{\lprior(H) - \eps\} = \Bigl\{ \underbrace{\sup_{z^n} \uPi_{z^n}^{\eval \times \rho}(H) - \lprior(H)}_{\text{$<0$, by \eqref{eq:sure.loss}}} \Bigr\} + (\eps + \delta). \]
Since both individual transactions are acceptable to You, and there exists pairs $(\eps, \delta)$ such that You net payoff is strictly negative, it follows that You can be made a sure loser.  

Fortunately, the possibilistic IM provably avoids an even less severe internal inconsistency, which I call {\em one-sided contraction}:
\begin{equation}
\label{eq:contraction}
\sup_{z^n} \uPi_{z^n}^{\eval \times \rho}(H) < \uprior(H) \quad \text{or} \quad \inf_{z^n} \lPi_{z^n}^{\eval \times \rho}(H) > \lprior(H). 
\end{equation}
One-sided contraction is less concerning than sure-loss, but still problematic.  To see this, suppose that the first of the two inequalities in \eqref{eq:contraction} holds for a given $H$.  If I want to buy \$$1(\Theta \in H)$ from You {\em a priori}, then it's apparent that I can wait until data $z^n$ is revealed and purchase the gamble for a lower price.  This doesn't imply that You lose money, only that You're systematically giving up opportunities to earn more money.  This can't happen in the familiar, precise Bayesian case: since the expected value of a posterior probability is the prior probability, it's impossible for each posterior/conditional probability to be less than the prior/marginal probability.  

The stronger notion of (two-sided) {\em contraction} corresponds to replacing the ``or'' in the above display with ``and,'' and the dual notion of {\em dilation} corresponds to flipping both inequalities and replacing ``or'' with ``and.'' Both contraction and dilation are problematic in their own respects, but contraction is generally more serious.  Of course, if an e-possibilistic IM avoids one-sided contraction, as the next theorem establishes, then it necessarily avoids both sure-loss and (two-sided) contraction.  

\begin{thm}
\label{thm:no.sure.loss}
The e-possibilistic IM avoids one-sided contraction, i.e., there are no $H$ such that \eqref{eq:contraction} holds for the updating rule $(\credal, z^n) \mapsto (\lPi_{z^n}^{\eval \times \rho}, \uPi_{z^n}^{\eval \times \rho})$. 
\end{thm}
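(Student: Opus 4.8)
The plan is to show that \eqref{eq:contraction} fails for \emph{every} $H$, by first collapsing its two alternatives into a single inequality. Using the conjugacy relations $\lPi_{z^n}^{\eval\times\rho}(H) = 1 - \uPi_{z^n}^{\eval\times\rho}(H^c)$ and $\lprior(H) = 1 - \uprior(H^c)$, the second alternative $\inf_{z^n}\lPi_{z^n}^{\eval\times\rho}(H) > \lprior(H)$ is equivalent to $\sup_{z^n}\uPi_{z^n}^{\eval\times\rho}(H^c) < \uprior(H^c)$. Hence it suffices to prove the single claim: for every $H \subseteq \TT$,
\[ \sup_{z^n}\uPi_{z^n}^{\eval\times\rho}(H) \;\geq\; \uprior(H), \]
where the supremum ranges over all data sets that could be observed; applying this with $H$ rules out the first alternative of \eqref{eq:contraction} and applying it with $H^c$ rules out the second.

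To establish the displayed inequality, unwind the definitions: $\uPi_{z^n}^{\eval\times\rho}(H) = \sup_{\theta\in H}\{1 \wedge (\rho(\theta)\,\eval_\theta(z^n))^{-1}\}$. Since suprema commute and $x \mapsto 1 \wedge (\rho(\theta)x)^{-1}$ is non-increasing, taking $\sup_{z^n}$ inside gives
\[ \sup_{z^n}\uPi_{z^n}^{\eval\times\rho}(H) \;=\; \sup_{\theta\in H}\Bigl\{ 1 \wedge \frac{1}{\rho(\theta)\,\inf_{z^n}\eval_\theta(z^n)} \Bigr\} \;\geq\; \sup_{\theta\in H}\bigl\{ 1 \wedge \rho(\theta)^{-1} \bigr\}, \]
where the last step uses the key fact $\inf_{z^n}\eval_\theta(z^n) \leq 1$ for every $\theta$. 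This fact follows from the e-process property: with a constant stopping time, \eqref{eq:eval.bound} gives $\E_{\omega_0}\{\eval_\theta(Z^n)\} \leq 1$ for some $\omega_0$ with $f(\omega_0)=\theta$, and a non-negative random variable of mean at most $1$ has infimum at most $1$.

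It then remains to show $\sup_{\theta\in H}\{1 \wedge \rho(\theta)^{-1}\} \geq \uprior(H)$, and this is where the regularizer condition $\uprior\rho \leq 1$ (Definition~\ref{def:regularizer}) enters. Writing $m = \inf_{\theta'\in H}\rho(\theta') \geq 0$, we have $\rho(\theta) \geq m\,1(\theta\in H)$ for all $\theta$, so monotonicity and positive homogeneity of the upper prevision $\uprior$ (it is an upper envelope of a credal set, so these hold by \eqref{eq:envelope}) give $m\,\uprior(H) = \uprior\{m\,1(\cdot\in H)\} \leq \uprior\rho \leq 1$; thus $m \leq \uprior(H)^{-1}$ whenever $\uprior(H)>0$. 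Choosing $\theta_\epsilon \in H$ with $\rho(\theta_\epsilon) \leq \uprior(H)^{-1} + \epsilon$ yields $1 \wedge \rho(\theta_\epsilon)^{-1} \geq \uprior(H)/(1+\epsilon\,\uprior(H))$, and letting $\epsilon \downarrow 0$ gives the bound; the cases $\uprior(H)=0$ and $H=\TT$ are immediate. Chaining the inequalities proves the claim and hence the theorem. (In the vacuous-prior special case $\rho\equiv 1$, this just says the contour can be pushed arbitrarily close to $1$ at any $\theta$, consistent with Remark~\ref{re:typically}; and for the calibrator-based $\rho = (\gamma\circ q)^{-1}$ one may alternatively use the elementary bound $\gamma(u)\geq u$ on $[0,1]$, which follows from $1 = \int_0^1\gamma^{-1} \geq \int_0^u\gamma^{-1} \geq u/\gamma(u)$.)

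I expect the main obstacle to be making the step $\inf_{z^n}\eval_\theta(z^n) \leq 1$ fully rigorous: one must argue that ranging over \emph{all} observable data sequences (not just those in the support of one $\prob_{\omega_0}$) the e-process can actually be driven down to at most its mean, which means being careful about pointwise versus essential infima and about the degenerate case where $\eval_\theta \equiv 1$. Everything else --- the conjugacy reduction, the interchange of suprema, and the coherence properties of $\uprior$ --- is routine bookkeeping.
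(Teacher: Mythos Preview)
Your proof is correct and follows essentially the same route as the paper's: reduce via conjugacy to the single inequality $\sup_{z^n}\uPi_{z^n}^{\eval\times\rho}(H)\geq\uprior(H)$, use $\inf_{z^n}\eval_\theta(z^n)\leq 1$ (the paper phrases this as ``an average is never smaller than the minimum'' and likewise does not linger on the pointwise-versus-essential-infimum issue you flag), and finish with $m\,\uprior(H)\leq\uprior\rho\leq 1$ where $m=\inf_{\theta\in H}\rho(\theta)$. The paper's write-up is marginally cleaner in that it works directly with $1\wedge m^{-1}$ rather than your $\theta_\epsilon$ argument, but the logic is identical.
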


\begin{proof}
Fix any hypothesis $H \subseteq \TT$.  I'll focus on proving that the first inequality in \eqref{eq:contraction} doesn't hold, i.e., that 
\begin{equation}
\label{eq:no.contraction}
\sup_{n,z^n} \uPi_{z^n}^{\eval \times \rho}(H) \geq \uprior(H). 
\end{equation}
The version involving lower probabilities is proved similarly.  To start, note that 
\[ \inf_{n, z^n} \eval_\theta(z^n) \leq \sup_N \sup_{\omega: f(\omega)=\theta} \E_\omega \{ \eval_\theta(Z^N) \} \leq 1, \]
where the first inequality is because an average is never smaller than the minimum, and the second inequality by Ville.  That is, for any $\theta$, there exists $(n,z^n)$ such that $\eval_\theta(z^n)$ is no more than 1.  Now write out the left-hand of \eqref{eq:no.contraction} as follows:
\[ \sup_{n,z^n} \uPi_{z^n}^{\eval \times \rho}(H) = \sup_{n,z^n} \sup_{\theta \in H} \pi_{z^n}^{\eval \times \rho}(\theta) = 1 \wedge \Bigl\{ \inf_{n,z^n} \inf_{\theta \in H} \rho(\theta) \, \eval_\theta(z^n) \Bigr\}^{-1}. \]
It follows from the ``first observation'' above that 
\[ \sup_{n,z^n} \uPi_{z^n}^{\eval \times \rho}(H) \geq 1 \wedge \Bigl\{ \inf_{\theta \in H} \rho(\theta) \Bigr\}^{-1}. \]
Since $\rho(\theta) \geq \{ \inf_{\theta \in H} \rho(\theta) \} \times 1(\theta \in H)$, it's easy to see that 
\[ \Bigl\{ \inf_{\theta \in H} \rho(\theta) \Bigr\} \times \uprior(H) \leq \uprior\,\rho \leq 1, \]
where the first inequality is by monotonicity of the upper expectation and the second by definition of the regularizer $\rho$.  Plugging this bound into that above gives 
\[ \sup_{n,z^n} \uPi_{z^n}^{\eval \times \rho}(H) \geq 1 \wedge \uprior(H) = \uprior(H), \]
which completes the proof of the theorem. 
\end{proof}

An even stronger internal consistency property, a similarly-temporal version of {\em coherence}, might be desired.  Suffice it to say that the e-possibilistic IM generally satisfies only one of the two necessary and sufficient conditions \citet[][Sec.~6.5.2]{walley1991} for coherence.  There are cases where coherence can be achieved, but it can't be achieved in general; see \citet[][Sec.~3.3]{martin.partial} for more discussion on this point. 

\section{Proofs from the main paper}

\subsection{Proof of Proposition~\ref{prop:prior.reg}}

By definition of the $\uprior$-upper expectation, 
\[ \uprior \, \rho = \int_0^1 \Bigl\{ \sup_{\theta: q(\theta) > s} \rho(\theta) \Bigr\} \, ds = \int_0^1 \Bigl\{ \sup_{\theta: q(\theta) > s} \frac{1}{\gamma \circ q(\theta)} \Bigr\} \, ds. \]
Since $\gamma$ is non-decreasing, 
\[ q(\theta) > s \implies \frac{1}{\gamma \circ q(\theta)} \leq \frac{1}{\gamma(s)}, \]
and from here the claim follows immediately from Equation~\eqref{eq:calibrate}.  

\subsection{Proof of Proposition~\ref{prop:pullback}}

Take any fixed $\prior \in \credal$.  Non-emptiness of $\pbcred_\prior$ is a consequence of the classical results mentioned above.  To prove convexity, take two elements $\pb_1$ and $\pb_2$ in $\pbcred_\prior$ and a constant $\tau \in [0,1]$, and then define the mixture $\pb^\text{mix} = (1-\tau) \pb_1 + \tau \pb_2$; the goal is to show that $\pb^\text{mix} \in \pbcred_\prior$.  If $\Omega \sim \pb^\text{mix}$, then for any event $H$ in $\TT = f(\OO)$, 
\begin{align*}
\pb^\text{mix}\{ f(\Omega) \in H \} & = (1-\tau) \, \pb_1\{ f(\Omega) \in H\} + \tau \, \pb_2\{ f(\Omega) \in H \} \\
& = (1-\tau) \, \prior(H) + \tau \, \prior(H) \\
& = \prior(H).
\end{align*}
This implies that $\pb^\text{mix} \in \pbcred_\prior$ if $\pb_1$ and $\pb_2$ are and, therefore, that $\pbcred_\prior$ is convex.  Finally, to prove that $\pbcred_\prior$ is closed with respect to the weak topology, consider a sequence $(\pb_t: t \geq 1)$ in $\pbcred_\prior$ with a weak limit $\pb_\infty$; the goal is to show that $\pb_\infty \in \pbcred_\prior$.  By the continuous mapping theorem, if $\Omega_t \sim \pb_t$, then $f(\Omega_t) \to f(\Omega_\infty)$ in distribution as $t \to \infty$.  But the distribution of $f(\Omega_t)$ is $\prior$ for all $t$ and, consequently, the distribution of $f(\Omega_\infty)$ is also $\prior$.  This implies $\pb_\infty$ is contained in $\pbcred_\prior$ and, hence, the latter collection is closed. 

\subsection{Proof of Theorem~\ref{thm:reg.ville}} 

The first claim is just a direct computation using Equation~\eqref{eq:uuprob} when the function $g = \eval^\text{reg}$ factors as $g(\cdot, \omega) = \rho(f(\omega)) \, \eval_{f(\omega)}(\cdot)$:
\begin{align*}
\uuprob \, \eval^\text{reg} & = \sup_{\prior \in \credal} \sup_{\pb \in \pbcred_\prior} \E^{\Omega \sim \pb} \bigl[ \E^{Z \sim \prob_\Omega}\{ \eval^\text{reg}(Z^N, f(\Omega)) \} \bigr] \\
& = \sup_{\prior \in \credal} \sup_{\pb \in \pbcred_\prior} \E^{\Omega \sim \pb} \bigl[ \rho( f(\Omega) ) \,  \underbrace{\E^{Z \sim \prob_\Omega}\{ \eval_{f(\Omega)}(Z^N) \}}_{\text{$\leq 1$, by Equation \eqref{eq:eval.bound}}} \bigr] \\
& \leq \sup_{\prior \in \credal} \sup_{\pb \in \pbcred_\prior} \E^{\Omega \sim \pb} \bigl[ \rho\{ f(\Omega) \} \bigr] \\
& = \sup_{\prior \in \credal} \E^{\Theta \sim \prior} \{ \rho(\Theta) \} \\
& \leq 1,
\end{align*}
where the last inequality follows by definition of the regularizer $\rho$, and the penultimate equality follows by definition of $\pbcred_\prior$: if $\Omega \sim \pb \in \pbcred_\prior$, then the distribution of $f(\Omega)$ is the same as that of $\Theta$ under $\prior$.  The second claim, the regularized Ville's inequality in Equation~\eqref{eq:reg.ville}, follows from the first claim and an application of Markov's inequality inside the upper-probability calculation.  If I write ``$(Z,\Omega) \sim \prob_\bullet \otimes \pb$'' to represent the joint distribution of $(Z,\Omega)$ under the model where $\Omega \sim \pb$ and $(Z \mid \Omega=\omega) \sim \prob_\omega$, then:
\begin{align*}
\uuprob \bigl[ \eval^\text{\rm reg}\{Z^N, f(\Omega)\} > \alpha^{-1} \bigr] & = \sup_{\prior \in \credal} \sup_{\pb \in \pbcred_\prior} \prob^{(Z,\Omega) \sim \prob_\bullet \otimes \pb}\bigl[ \eval^\text{reg}\{Z^N, f(\Omega) \} > \alpha^{-1} \bigr] \\
& \leq \sup_{\prior \in \credal} \sup_{\pb \in \pbcred_\prior} \alpha \, \E^{\Omega \sim \pb} \bigl[ \E^{Z \sim \prob_\Omega}\{ \eval^\text{reg}(Z^N, f(\Omega)) \} \bigr] \\
& = \alpha \, \uuprob \, \eval^\text{reg} \\
& \leq \alpha,
\end{align*}
where the first ``$\leq$'' above is by the usual Markov's inequality and the last line is by the claim in Equation~\eqref{eq:reg.ville0} proved above.  

\subsection{Proof of Theorem~\ref{thm:valid.vac}}

Since the IM with contour $\pi^\eval$ dominates that with contour $\pi$ defined earlier, the anytime validity of the latter implies that of the former.  For concreteness, however, I'll give a direct proof of anytime validity of $\uPi^\eval$.  

For any data set $z^n$, the IM's possibility measure output $H \mapsto \uPi_{z^n}(H)$ is monotone.  Therefore, if $\omega$ is such that $f(\omega) \in H$, then 
\[ \uPi_{z^n}^\eval(H) \geq \uPi_{z^n}^\eval(\{ f(\omega) \}) = \pi_{z^n}^\eval(f(\omega)), \]
and, consequently, for any $\alpha \in [0,1]$, 
\[ \uPi_{z^n}(H) \leq \alpha \implies \pi_{z^n}^\eval(f(\omega)) \leq \alpha \iff \eval_{f(\omega)}(z^n) \geq \alpha^{-1}, \]
where the right-most property is by definition of the IM's contour function in terms of the e-process's reciprocal.  It follows that 
\[ \sup_{\omega: f(\omega) \in H} \prob_\omega\{ \uPi_{Z^N}^\eval(H) \leq \alpha \} \leq \sup_{\omega: f(\omega) \in H} \prob_\omega\{ \eval_{f(\omega)}(Z^N) \geq \alpha^{-1} \} \leq \alpha, \]
with the last inequality due to Equation~\eqref{eq:ville}, thus completing the proof.

\subsection{Proof of Corollary~\ref{cor:uniform.vac}}

The proof is based on the following observation:
\begin{equation}
\label{eq:equivalence} 
\uPi_{Z^N}^\eval(H) \leq \alpha \text{ for some $H$ with $H \ni f(\omega)$} \iff \pi_{Z^N}^\eval(f(\omega)) \leq \alpha. 
\end{equation}
The ``$\Longleftarrow$'' direction is obvious since $H := \{f(\omega)\}$ is a hypothesis that contains $f(\omega)$.  The ``$\Longrightarrow$'' is similarly obvious by the monotonicity property as stated in Equation~\eqref{eq:monotone}. Since Corollary~1 says the right-most event in the above display has probability no more than $\alpha$, uniformly in $\omega$ and in $N$, the same must be true of the equivalent left-most event.  

\subsection{Proof of Theorem~\ref{thm:imdec}}

Write $\Theta$ instead of $f(\Omega)$ for now.  First, observe that the integrand 
\[ s \mapsto \sup\{\ell_a(\theta): \pi_{Z^N}^{\eval \times \rho}(\theta) \geq s\} \]
in $\uPi_{Z^N}^{\eval \times \rho}(\ell_a)$ is non-decreasing.  Next, I proceed by considering two separate cases.  
\begin{enumerate}
\item Case: $\eval^\text{reg}(Z^N,\Theta) > 1$.  In this case,  $\pi_{Z^N}^{\eval \times \rho}(\Theta) = 1$ so, by the monotonicity property mentioned above, the pair $(Z^N,\Theta)$ is such that 
\[ \sup_{\theta: \pi_{Z^N}^{\eval \times \rho}(\theta) \geq s} \ell_a(\theta) \geq \ell_a(\Theta) \quad \text{for all $s \in [0,1]$}. \]
Of course, if the integrand is bounded below, then the integral, over all of $[0,1]$, is also bounded below by the same value.  Therefore, 
\[ \uPi_{Z^N}^{\eval \times \rho}(\ell_a) \geq \ell_a(\Theta), \]
which implies 
\begin{equation}
\label{eq:risk.bound}
\frac{\ell_a(\Theta)}{\uPi_{Z^N}^{\eval \times \rho}(\ell_a)} \leq 1 < \eval^\text{reg}(Z^N,\Theta). 
\end{equation}
\item Case: $\eval^\text{reg}(Z^N,\Theta) \leq 1$.  In this case, $\pi_{Z^N}^{\eval \times \rho}(\Theta) \leq 1$, so I can lower-bound the upper expected loss by truncating the range of integration as follows:
\begin{align*}
\uPi_{Z^N}^{\eval \times \rho}(\ell_a) & = \int_0^1 \Bigl\{ \sup_{\theta: \pi_{z^n}^{\eval \times \rho}(\theta) \geq s} \ell_a(\theta) \Bigr\} \, ds \\
& \geq \int_0^{\pi_{Z^N}^{\eval \times \rho}(\Theta)} \Bigl\{ \sup_{\theta: \pi_{z^n}^{\eval \times \rho}(\theta) \geq s} \ell_a(\theta) \Bigr\} \, ds \\
& \geq \pi_{Z^N}^{\eval \times \rho}(\Theta) \, \ell_a(\Theta),
\end{align*}
where the last inequality is again by the monotonicity property highlighted above.  On rearranging, and using the fact that $\pi_{Z^N}^{\eval \times \rho}(\Theta) = \eval_\Theta(Z^N) \, \rho(\Theta)$ in this case, I get 
\[ \frac{\ell_a(\Theta)}{\uPi_{Z^N}^{\eval \times \rho}(\ell_a)} \leq \eval^\text{reg}(Z^N,\Theta), \]
which is the same bound as in \eqref{eq:risk.bound}.  
\end{enumerate} 
Next, it's clear that the common bound derived in the two separate cases above holds uniformly in the actions, i.e., 
\[ \sup_{a \in \AA} \frac{\ell_a(\Theta)}{\uPi_{Z^N}^{\eval \times \rho}(\ell_a)} \leq \eval^\text{reg}(Z^N,\Theta). \]
Plugging $f(\Omega)$ back in for $\Theta$, taking $\uuprob$-expectation on both sides, and then applying the property in Equation~\eqref{eq:reg.ville0}, establishes the bound in Equation~\eqref{eq:dec.bound}. 

\bibliographystyle{apalike}
\bibliography{/Users/rgmarti3/Dropbox/Research/mybib}

\end{document}